\theoremstyle{definition} 
\newtheorem{definition}{Definition}[section]
\theoremstyle{plain} 
\newtheorem{theorem}{Theorem}[section] 
\newtheorem{proposition}[theorem]{Proposition} 
\newtheorem{corollary}[theorem]{Corollary} 
\newtheorem{lemma}[theorem]{Lemma} 
\theoremstyle{definition} 
\newtheorem{example}[theorem]{Example}
\theoremstyle{remark}
\newcommand{\R}{\mathbb R}
\newcommand{\T}{\mathbb T}
\newcommand{\V}{\mathbb V}
\newcommand{\W}{\mathbb W}
\newcommand{\Z}{\mathbb Z}
\newcommand{\N}{\mathbb N}
\newcommand{\openball}[2] { \BB\left(#1,#2\right) } 
\newcommand{\closedball}[2] { \overline{\BB}\left(#1,#2\right) } 
\newcommand{\closedballM}[3] { \overline{\BB}_{#3}\left(#1,#2\right) } 
\newcommand{\reach}[1] { \mathrm{reach}(#1) } 
\newcommand{\Grass}[2] { \mathcal{G}_{#1}(#2) } 
\newcommand{\BB} { \mathcal{B} } 
\newcommand{\CC} { \mathcal{C} } 
\newcommand{\MM} { \mathcal{M} } 
\newcommand{\MMo} { \mathcal{M}_0 } 
\newcommand{\MMcheck} { \check \MM } 
\newcommand{\imm} { u } 
\newcommand{\immcheck} { \check u } 
\newcommand{\matrixspace}[1] { M(#1) } 
\newcommand{\petito}[1] { o(#1) } 
\newcommand{\grando}[1] { O(#1) } 
\newcommand{\eucN}[1] { \left\|#1\right\| } 
\newcommand{\frobN}[1] { \left\|#1\right\|_\mathrm{F} } 
\newcommand{\gammaN}[1] { \left\|#1\right\|_\gamma } 
\newcommand{\gammaNun}[1] { \left\|#1\right\|_1 } 
\newcommand{\eucP}[2] { \left\langle #1, #2\right\rangle } 
\newcommand{\frobP}[2] { \langle #1, #2\rangle_\mathrm{F} } 
\newcommand{\transp}[1] { #1' } 
\newcommand{\CechF}[1] { V[#1] }
\newcommand{\card}[1] { \mathrm{card}(#1) }
\newcommand{\Star}[1] { \mathrm{St}\left(#1\right) }
\newcommand{\closedStar}[1] { \overline{\mathrm{St}}\left(#1\right) }
\newcommand{\subdiv}[2] { \mathrm{sub}^{#2}\left(#1\right) }
\newcommand{\vor}[1] { \mathrm{Vor}(#1) } 
\newcommand{\Hdist}[2] { \mathrm{d}_\mathrm{H}\left(#1, #2\right) } 
\newcommand{\NN} { \mathcal{N} } 
\newcommand{\UU} { \mathcal{U} } 
\newcommand{\VV} { \mathcal{V} } 
\newcommand{\p} { \mathbbm{p} } 
\newcommand{\q} { \mathbbm{q} } 
\newcommand{\vbb} { \mathbbm{v} } 
\newcommand{\w} { \mathbbm{w} } 
\newcommand{\Zd} { \mathbb{Z}_2 } 
\newcommand{\topreal}[1] { \left\vert #1 \right\vert } 
\newcommand{\skeleton}[2] { {#1}^{#2} } 
\newcommand{\cupp} { \smile } 
\newcommand{\cohomring}[1] { H^*(#1) } 
\newcommand{\bigcohomring}[1] { H^*\big(#1\big) }
\newcommand{\checkX} { \check{X} } 
\newcommand{\checkY} { \check{Y} } 
\newcommand{\X} { \mathbb{X} } 
\newcommand{\Y} { \mathbb{Y} } 
\renewcommand{\S}{\ifmmode\operatorname{\mathbb{S}}\else\origS\fi} 
\renewcommand{\P}{\ifmmode\operatorname{\mathbb{P}}\else\origS\fi} 
\renewcommand{\SS} {\ifmmode\operatorname{\mathcal{S}}\else\origS\fi} 
\newcommand{\facemap} { \mathcal{F} } 
\newcommand{\facemapK}[1] { \mathcal{F}_{#1} } 
\newcommand{\pSF}[2] { w_{#1}(#2) } 
\newcommand{\pSFt}[3] { w_{#1}^{#3}(#2) } 
\newcommand{\dist}[2] { \mathrm{dist}\left(#1, #2\right) } 
\newcommand{\distgamma}[2] { \mathrm{dist}_{\gamma}\left(#1, #2\right) } 
\newcommand{\proj}[2] { \mathrm{proj}\left(#1, #2\right) } 
\newcommand{\projj}[2] { \mathrm{proj}_{#2}\left(#1 \right) } 
\newcommand{\projmatrix}[1] { P_{#1} } 
\newcommand{\med}[1] { \mathrm{med}\left( #1 \right) } 
\newcommand{\tmax}[1] { t^{\mathrm{max}}\left( #1 \right) } 
\newcommand{\tmaxgamma}[1] { t_{\gamma}^{\mathrm{max}}\left( #1 \right) } 
\newcommand{\tdeatho} { t^{\dagger} } 
\newcommand{\diam}[1] { \mathrm{diam}\left( #1 \right) } 
\newcommand{\coring}[1] { H^*\left( #1 \right) } 
\newcommand{\complementaire}[1] { {#1}^c } 
\newcommand{\wfs}[1] { \mathrm{wfs}\left( #1 \right) } 
\newcommand{\barcode}[1] { \mathrm{Barcode}\left(#1\right) } 
\newcommand{\bdist}[2] { \mathrm{d}_\mathrm{b}\left(#1, #2\right) } 
\newcommand{\idist}[2] { \mathrm{d}_\mathrm{i}\left(#1, #2\right) } 
\begin{document}
\sloppy 
\title{Computing persistent Stiefel-Whitney classes \\ of line bundles
}


\author{Raphaël Tinarrage 

\vspace{.1cm}\\

\url{https://raphaeltinarrage.github.io/}
}

\date{}


%

\maketitle

\paragraph{Abstract.}
We propose a definition of persistent Stiefel-Whitney classes of vector bundle filtrations. It relies on seeing vector bundles as subsets of some Euclidean spaces. The usual \v{C}ech filtration of such a subset can be endowed with a vector bundle structure, that we call a \v{C}ech bundle filtration. We show that this construction is stable and consistent. When the dataset is a finite sample of a line bundle, we implement an effective algorithm to compute its first persistent Stiefel-Whitney class. In order to use simplicial approximation techniques in practice, we develop a notion of weak simplicial approximation. As a theoretical example, we give an in-depth study of the normal bundle of the circle, which reduces to understanding the persistent cohomology of the torus knot (1,2).
We illustrate our method on several datasets inspired by image analysis.

\setlength{\cftbeforesecskip}{4pt}
\tableofcontents

\section{Introduction}
\label{intro}

The inference of relevant topological properties of data represented as point clouds in Euclidean spaces is a central challenge in Topological Data Analysis (TDA).   
Given a (finite) set of points $X$ in $\R^n$, persistent homology provides a now classical and powerful tool to construct persistence diagrams whose points can be interpreted as homological features of $X$ at different scales.

In this work, we aim at developing a similar theoretical framework for another topological invariant: the Stiefel-Whitney classes.
These classes, and more generally characteristic classes, are a powerful tool from algebraic topology, that contains additional information to the cohomology groups. 
For the Stiefel-Whitney classes to be defined, the input topological space has to be endowed with an additional structure: a real vector bundle.
They have been widely used in differential topology, for instance in the problem of deciding orientability of manifolds, of immersing manifolds in low-dimensional spaces, or in cobordism problems \citep{Milnor_Characteristic}.
Our work is motivated by introducing this tool to the TDA community. 

\paragraph{Previous work.}
To our knowledge, the problem of estimating Stiefel-Whitney classes from a point cloud observation has received little attention.
In the work of \citet{aubrey2011persistent}, one finds an algorithm to compute the Stiefel-Whitney classes in the particular case of the tangent bundle of a Euler mod-$2$ space (that is, a simplicial complex for which the link of each simplex has even Euler characteristic).
Close to the subject, \cite{perea2018multiscale} proposes a dimensionality reduction algorithm, based on the choice of a Stiefel-Whitney class, seen as a persistent cohomology class.

Recently, \cite{scoccola2021approximate} developed several notions of vector bundle adapted to finite simplicial complexes, one of which is used in this paper. They propose algorithms to compute the first two Stiefel-Whitney classes, which are conceptually different than the one presented here.

\paragraph{Our contributions.}
Just as persistent homology allows to extract homological features from filtrations of topological spaces, we propose a framework that allows to extract Stiefel-Whitney classes features from filtrations of vector bundles. It is briefly motivated here.

In general, if $X$ is a topological space endowed with a vector bundle $\xi$ of dimension $d$, there exists a collection of cohomology classes $w_1(\xi), ..., w_d(\xi)$, the Stiefel-Whitney classes, such that $w_i(\xi)$ is an element of the cohomology group $H^i(X)$ over $\Zd$ for $i \in \llbracket1, d\rrbracket$. 
In order to define Stiefel-Whitney classes in a persistent-theoretic framework, we will use a convenient definition of vector bundles:
defining a vector bundle over a compact space $X$ is equivalent (up to isomorphism of vector bundles) to defining a continuous map $p \colon X \rightarrow \Grass{d}{\R^m}$ for $m$ large enough, where $\Grass{d}{\R^m}$ is the Grassmann manifold of $d$-planes in $\R^m$. Such a map is called a \emph{classifying map} for $\xi$. 
%

Given a classifying map $p\colon X \rightarrow \Grass{d}{\R^m}$ of a vector bundle $\xi$, the Stiefel-Whitney classes $w_1(\xi), ..., w_d(\xi)$ can be defined by pushing forward some particular classes of the Grassmannian via the induced map in cohomology $p^*\colon H^*(X) \leftarrow H^*(\Grass{d}{\R^m})$.
If $w_i$ denotes the $i^\text{th}$ Stiefel-Whitney class of the Grassmannian, then the $i^\text{th}$ Stiefel-Whitney class of the vector bundle $\xi$ is
\begin{align}
\label{intro:eq:SFclassdef}
w_i(\xi) = p^*(w_i).
\end{align}

In order to translate these considerations in a persistent-theoretic setting, suppose that we are given a dataset of the form $(X, p)$, where $X$ is a finite subset of $\R^n$, and $p$ is a map $p\colon X \rightarrow \Grass{d}{\R^m}$.
Denote by $(X^t)_{t \geq 0}$ the \v{C}ech filtration of $X$, that is, the collection of the $t$-thickenings $X^t$ of $X$ in the ambient space $\R^n$. 
It is also known as the \emph{offset filtration} of $X$.
In order to define some \textit{persistent} Stiefel-Whitney classes, one would try to extend the map $p\colon X \rightarrow \Grass{d}{\R^m}$ to $p^t\colon X^t \rightarrow \Grass{d}{\R^m}$.
However, we did not find any interesting way to extend this map.
To overcome this issue, we propose to look at the dataset in a different way.
Transform the vector bundle $(X, p)$ into a subset of $\R^n \times \Grass{d}{\R^m}$ via 
\begin{align*}
\checkX = \left\{\left(x, p(x)\right), ~x \in X\right\}.
\end{align*}
The Grassmann manifold $\Grass{d}{\R^m}$ can be naturally embedded in $\matrixspace{\R^m}$, the space of $m\times m$ matrices. 
From this viewpoint, $\checkX$ can be seen as a subset of $\R^n \times \matrixspace{\R^m}$.
Let $(\checkX^t)_{t \geq 0}$ denotes the \v{C}ech filtration of $\checkX$ in the ambient space $\R^n \times \matrixspace{\R^m}$.
A natural map $p^t\colon \checkX^t \rightarrow \Grass{d}{\R^m}$ can be defined: map a point $(x, A) \in \checkX^t$ to the projection of $A$ on $\Grass{d}{\R^m}$, seen as a subset of $\matrixspace{\R^m}$:
\begin{align*}
p^t\colon (x,A) \in \R^n \times \matrixspace{\R^m} \longmapsto \proj{A}{\Grass{d}{\R^m}}.
\end{align*}
The projection is well-defined if $A$ does not belong to the medial axis of $\Grass{d}{\R^m}$. 
We show that this condition can be verified in practice (Lemma \ref{lem:projongrass}).
The \v{C}ech filtration of $\checkX$, endowed with the extended projection maps $(p^t\colon \checkX^t \rightarrow \Grass{d}{\R^m})_{t}$, is called the \emph{\v{C}ech bundle filtration}. 
Now we can define the $i^{\text{th}}$ persistent Stiefel-Whitney class as the collection of classes $w_i(X)=(w_i^t(X))_{t}$, where $w_i^t(X)$ is the push-forward 
\begin{align*}
w_i^t(X) = (p^t)^*(w_i),
\end{align*}
and where $w_i$ is the $i^{\text{th}}$ Stiefel-Whitney class of the Grassmann manifold (compare with Equation \eqref{intro:eq:SFclassdef}).
We summarize the information given by a persistent Stiefel-Whitney class in a diagram, that we call a \emph{lifebar}.
 
The construction we propose is defined for any subset of $\R^n \times \matrixspace{\R^m}$. 
We prove that this construction is stable, a result reminiscent of the usual stability theorem of persistent homology (Corollary \ref{cor:stability}).
We also show that the persistent Stiefel-Whitney classes are consistent estimators of Stiefel-Whitney classes (Corollary \ref{cor:consistency_stability}).


Moreover, we propose a concrete algorithm to compute the persistent Stiefel-Whitney classes.
This algorithm is based on several ingredients, including the triangulation of projective spaces, and the simplicial approximation method.
The simplicial approximation, widely used in theory, can be applied only if the simplicial complex is refined enough, a property that is attested by the star condition.
However, this condition cannot be verified in practice.
We circumvent this problem by introducing the \emph{weak star condition}, a variant that only depends on the combinatorial structure of the simplicial complex.
When the simplicial complex is fine enough, the star condition and the weak star condition turn out to be equivalent notions (Proposition \ref{prop:weak_star_strong_star}).

\paragraph{Numerical experiments.}
A Python notebook, containing a concise demonstration of our method, can be found at \url{https://github.com/raphaeltinarrage/PersistentCharacteristicClasses/blob/master/Demo.ipynb}.
Another notebook, containing experiments on datasets inspired by image analysis, can be found at \url{https://github.com/raphaeltinarrage/PersistentCharacteristicClasses/blob/master/Experiments.ipynb}.

\paragraph{Outline.}
The rest of the paper is as follows. Sect. \ref{background} gathers usual definitions related to vector bundles, Stiefel-Whitney classes, simplicial approximation and persistent cohomology.
The definitions of vector bundle filtrations and persistent Stiefel-Whitney classes are given in Sect. \ref{sec:persistentSWclasses}, where their stability and consistency properties are established.
In Sect. \ref{sec:computation}, we propose a sketch of algorithm to compute these classes, based on simplicial approximation techniques.
In Sect. \ref{sec:algorithm} we give a particular attention to some technical details needed to implement this algorithm.
In Sect. \ref{sec:experiments} we apply our algorithm on concrete datasets.
For the clarity of the paper, the proofs of some results have been postponed to the appendices.

\section{Background}
\label{background}
\subsection{Stiefel-Whitney classes}
\label{background:SFclasses}
In this subsection, we define vector bundles and Stiefel-Whitney classes.
The reader may refer to \cite{Milnor_Characteristic} for an extended presentation.
Let $X$ be a topological space and $d\geq1$ an integer.

\paragraph{Vector bundles.}
A \emph{vector bundle $\xi$} of dimension $d$ over $X$ consists of a topological space $A = A(\xi)$, the \emph{total space}, a continuous map $\pi = \pi(\xi) \colon A \rightarrow X$, the \emph{projection map}, and for every $x \in X$, a structure of $d$-dimensional vector space on the \emph{fiber} $\pi^{-1}(\{ x \})$.
Moreover, $\xi$ must satisfy the local triviality condition: for every $x \in X$, there exists a neighborhood $U \subseteq X$ of $x$ and a homeomorphism $h \colon U \times \R^d \rightarrow \pi^{-1}(U)$ such that for every $y \in U$, the map $z \mapsto h(y,z)$ defines an isomorphism between the vector spaces $\R^d$ and $\pi^{-1}(\{ y \})$.


A \emph{bundle map} between two vector bundles $\xi$ and $\eta$ with base spaces $X$ and $Y$ is a continuous map $f\colon A(\xi) \rightarrow A(\eta)$ which sends each fiber $\pi(\xi)^{-1}(\{ x \})$ isomorphically into another fiber $\pi(\eta)^{-1}(\{ x' \})$. 
If such a map exists, there exist a unique map $\overline f$ which makes the following diagram commute:
\begin{center}
\begin{tikzcd}
A(\xi) \arrow[r, "f"] \arrow[d, "\pi(\xi)", swap]
&[1em] A(\eta) \arrow[d, "\pi(\eta)" ] \\
X \arrow[r, "\overline f", swap]
& Y
\end{tikzcd}
\end{center}
\noindent
If $X=Y$, and if $f$ is a homeomorphism, we say that $f$ is an \emph{isomorphism} of vector bundles, and that $\xi$ and $\eta$ are \emph{isomorphic}.
The \emph{trivial bundle} of dimension $d$ over $X$, denoted $\epsilon$, is defined with the total space $A(\epsilon) = X \times \R^d$, with the projection map $\pi(\epsilon)$ being the projection on the first coordinate, and where each fiber is endowed with the usual vector space structure of $\R^d$. 
A vector bundle $\xi$ over $X$ is said \emph{trivial} if it is isomorphic to $\epsilon$.

Let $m \geq 0$.
The Grassmann manifold $\Grass{d}{\R^m}$ is a set which consists of all $d$-dimensional linear subspaces of $\R^m$. It can be given a smooth manifold structure. When $d=1$, $\Grass{1}{\R^m}$ corresponds to the real projective space $\P_{m-1}(\R)$.
In order to avoid mentioning $m$, it is convenient to consider the infinite Grassmannian.
The infinite Grassmann manifold $\Grass{d}{\R^{\infty}}$ is the set of all $d$-dimensional linear subspaces of $\R^{\infty}$, where $\R^{\infty}$ is the vector space of sequences with a finite number of nonzero terms.

Let $X$ be a paracompact space.
There exists a correspondence between the vector bundles over $X$ (up to isomorphism) and the continuous maps $X \rightarrow \Grass{d}{\R^\infty}$ (up to homotopy). Such a map is called a \emph{classifying map}.
When $X$ is compact, there exist an integer $m \geq 1$ such that a classifying map factorizes through
\begin{center}
\begin{tikzcd}
X \arrow[r] 
& \Grass{d}{\R^{m}} \arrow[r, hook]
& \Grass{d}{\R^{\infty}}.
\end{tikzcd}
\end{center}
Consequently, in the rest of this paper, we shall consider that vector bundles are given as a continuous maps $X \rightarrow \Grass{d}{\R^{m}}$ or $X \rightarrow \Grass{d}{\R^{\infty}}$.

\paragraph{Axioms for Stiefel-Whitney classes.}
To each vector bundle $\xi$ over a paracompact base space $X$, one associates a sequence of cohomology classes 
\begin{align*}
w_i(\xi) \in H^i(X, \Z_2), ~~~~~i \in \N,
\end{align*}
called the \emph{Stiefel-Whitney classes of $\xi$}.
These classes satisfy:
\begin{itemize}
\item \textbf{Axiom 1:} $w_0$ is equal to $1 \in H^0(X, \Zd)$, and if $\xi$ is of dimension $d$, then $w_i(\xi) = 0$ for $i>d$. 
\item \textbf{Axiom 2:} if $f\colon \xi \rightarrow \eta$ is a bundle map, then $w_i(\xi) = \overline f^* w_i(\eta)$, where $\overline f^*$ is the map in cohomology induced by the underlying map $\overline f\colon X \rightarrow Y$ between base spaces.
\item \textbf{Axiom 3:} if $\xi, \eta$ are bundles over the same base space $X$, then for all $k \in \N$, $w_k(\xi \oplus \eta) = \sum_{i=0}^k w_i(\xi) \cupp w_{k-i}(\eta)$, where $\oplus$ denotes the Withney sum, and $\cupp$ denotes the cup product.
\item \textbf{Axiom 4:} if $\gamma_1^1$ denotes the tautological bundle of the projective line $\Grass{1}{\R^2}$, then $w_1(\gamma_1^1) \neq 0$.
\end{itemize}
If such classes exists, then one proves that they are unique.
A way to show that they actually exist relies on the cohomology of the Grassmannians.

\paragraph{Construction of the Stiefel-Whitney classes.}
The cohomology rings of the Grassmann manifolds admit a simple description: $H^*(\Grass{d}{\R^\infty}, \Z_2)$ is the free abelian ring generated by $d$ elements $w_1, ..., w_d$. As a graded algebra, the degree of these elements are $|w_1| = 1,..., |w_d| = d$.
Hence we can write 
\begin{align*}
H^*(\Grass{d}{\R^\infty}, \Z_2) \cong \Z_2[w_1, ..., w_d].
\end{align*}
The generators $w_1, ..., w_d$ can be seen as the Stiefel-Whitney classes of a particular vector bundle on $\Grass{d}{\R^\infty}$, called the \emph{tautological bundle}.
Now, for any vector bundle $\xi$, define
\begin{align*}
w_i(\xi) = f_\xi^* (w_i),
\end{align*}
where $f_\xi \colon X \rightarrow \Grass{d}{\R^\infty}$ is a classifying map for $\xi$, and $f_\xi^* \colon H^*(X )\leftarrow H^*(\Grass{d}{\R^\infty})$ the induced map in cohomology. This construction yields the Stiefel-Whitney classes.

\paragraph{Interpretation of the Stiefel-Whitney classes.}
The Stiefel-Whitney classes are invariants of isomorphism classes of vector bundles, and carry topological information. 
Their main interpretation is the following: the Stiefel-Whitney classes are obstructions to the existence of nowhere vanishing sections of vector bundles. Let us explain this result. A \emph{section} of a vector bundle $\pi(\xi)\colon A \rightarrow X$ is a continuous map $s\colon X \rightarrow A$ such that $s(x) \in \pi^{-1}(\{x\})$ for all $x \in X$. It is nowhere vanishing if $s(x) \neq 0$ for all $x \in X$, where $0$ denotes the origin of the vector space $\pi^{-1}(\{x\})$. Given $k$ sections $s_1, \dots, s_k$, we say that they are \emph{independent} if the family $\left(s_1(x), \dots, s_k(x)\right)$ is free for all $x \in X$.
Then the following result holds: if a vector bundle $\xi$ of dimension $d$ admits $k$ independent and nowhere vanishing sections, then the top $k$ Stiefel-Whitney classes $w_d(\xi), \dots, w_{d-k+1}(\xi)$ are zero.

Another property that we will use in this paper is the following: the first Stiefel-Whitney class detects orientability. 
More precisely, the first Stiefel-Whitney class $w_1(\xi)$ is zero if and only if the vector bundle $\xi$ is orientable.
In the same vein, if $X$ is a compact manifold and $\tau$ its tangent bundle, then the manifold $X$ is orientable if and only if $w_1(\tau) = 0$.

In this paper, we will particularly study \emph{line bundles}, that is, vector bundles of dimension $d=1$. As a consequence of being an obstruction to nowhere vanishing sections, a line bundle $\xi$ on any topological space $X$ is trivial if and only if $w_1(\xi) = 0$.
More generally, the first Stiefel-Whitney class establishes a bijection between the isomorphism classes of line bundles over $X$ and its first cohomology group $H^1(X)$ over $\Zd$.
As an example, the circle $\S_1$ has cohomology group $H^1(\S_1) = \Zd$, hence admits only two isomorphism classes of line bundles. 
As another example, the sphere $\S_2$ has trivial cohomology group $H^1(\S_2) = 0$, hence only admits trivial line bundles.

\subsection{Simplicial approximation}
\label{background:simplicial_approx}
We start by defining the simplicial complexes and their topology.
We then describe the technique of simplicial approximation, based on the book of \citet{Hatcher_Algebraic}.

\paragraph{Simplicial complexes.}
A \emph{simplicial complex} is a set $K$ such that there exists a set $V$, the set of \emph{vertices}, with $K$ a collection of finite and non-empty subsets of $V$, and such that $K$ satisfies the following condition: 
for every $\sigma \in K$ and every non-empty subset $\nu \subseteq \sigma$, $\nu$ is in $K$.
The elements of $K$ are called \emph{faces} or \emph{simplices} of the simplicial complex $K$.

For every simplex $\sigma \in K$, we define its dimension $\dim(\sigma) = \card{\sigma} -1$.
The \emph{dimension} of $K$, denoted $\dim (K)$, is the maximal dimension of its simplices.
For every $i\geq 0$, the \emph{$i$-skeleton} $\skeleton{K}{i}$ is defined as the subset of $K$ consisting of simplices of dimension at most $i$. Note that $\skeleton{K}{0}$ corresponds to the underlying vertex set $V$, and that $\skeleton{K}{1}$ is a graph. 
Given a graph $G$, the corresponding \emph{clique complex} is the simplicial complex whose simplices are the sets of vertices of the cliques of $G$. 
We say that a simplicial complex $K$ is a \emph{flag complex} if it is the clique complex of its 1-skeleton $\skeleton{K}{1}$.

Given a simplex $\sigma \in K$, its \emph{(open) star} $\Star{\sigma}$ is the set of all the simplices $\nu \in K$ that contain $\sigma$. The open star is not a simplicial complex in general.
We also define its \emph{closed star} $\closedStar{\sigma}$ as the smallest simplicial subcomplex of $K$ which contains $\Star{\sigma}$. 


\paragraph{Geometric realizations.}
For every $p\geq 0$, the \emph{standard $p$-simplex} $\Delta^p$ is the topological space defined as the convex hull of the canonical basis vectors $e_1, ..., e_{p+1}$ of $\R^{p+1}$, endowed with the subspace topology.
To each simplicial complex $K$ is attached a \emph{geometric realization}. It is a topological space, denoted $\topreal{K}$, obtained by gluing the simplices of $K$ together.
According to this construction, each simplex $\sigma \in K$ admits a geometric realization $\topreal{\sigma}$ which is a subset of $\topreal{K}$. 
The following set is a partition of $\topreal{K}$:
\begin{equation*}
\left\{ \topreal{\sigma}, \sigma \in K \right\}.
\end{equation*}
This allows to define the \emph{face map} of $K$. It is the unique map $\facemapK{K} \colon \topreal{K} \rightarrow K$ that satisfies $x \in \topreal{\facemapK{K}(x)}$ for every $x \in \topreal{K}$.

If $\sigma$ is a face of $K$ of dimension at least 1, the subset $\topreal{\sigma}$ is canonically homeomorphic to the interior of the standard $p$-simplex $\Delta^p$, where $p = \dim(\sigma)$.
This allows to define on $\topreal{K}$ the barycentric coordinates: for every face $\sigma = [v_0, ..., v_p] \in K$, the points $x \in \topreal{\sigma}$ can be written as 
\begin{equation*}
x = \sum_{i=0}^p \lambda_i v_i
\end{equation*}
with $\lambda_0, ..., \lambda_p > 0$ and $\sum_{i=0}^p \lambda_i = 1$.

If $X$ is any a topological space, a \emph{triangulation} of $X$ consists of a simplicial complex $K$ together with a homeomorphism $h\colon X \rightarrow \topreal{K}$.

\paragraph{Simplicial approximation.}
A \emph{simplicial map} between simplicial complexes $K$ and $L$ is a map between geometric realizations $g \colon \topreal{K} \rightarrow \topreal{L}$ which sends each simplex of $K$ to a simplex of $L$ by a linear maps that sends vertices to vertices.
In other words, for every $\sigma = [v_0, ..., v_p] \in K$, the subset $[g(v_0), ..., g(v_p)]$ is a simplex of $L$, and the map $g$ restricted to $\topreal{\sigma} \subset \topreal{K}$ can be written in barycentric coordinates as
\begin{equation}
\label{background:eq:simplicial_map_1}
\sum_{i=0}^p \lambda_i v_i ~ \longmapsto ~ \sum_{i=0}^p \lambda_i g(v_i).
\end{equation}
A simplicial map $g \colon \topreal{K} \rightarrow \topreal{L}$ is uniquely determined by its restriction to the vertex sets $g_{| \skeleton{K}{0}} \colon \skeleton{K}{0} \rightarrow \skeleton{L}{0}$.
Reciprocally, let $f \colon \skeleton{K}{0} \rightarrow \skeleton{L}{0}$ be a map between vertex sets which satisfies the following condition: 
\begin{equation}
\label{background:eq:simplicial_map_2}
\forall \sigma \in K, ~f(\sigma) \in L.
\end{equation}
Then $f$ induces a simplicial map via barycentric coordinates, denoted $|f| \colon \topreal{K} \rightarrow \topreal{L}$.
In the rest of this paper, a simplicial map shall either refer to a map $g \colon \topreal{K} \rightarrow \topreal{L}$ which satisfies Equation \eqref{background:eq:simplicial_map_1}, to a map $f\colon \skeleton{K}{0} \rightarrow \skeleton{L}{0}$ which satisfies Equation \eqref{background:eq:simplicial_map_2}, or to the induced map $f \colon K \rightarrow L$.

Let $g \colon \topreal{K} \rightarrow \topreal{L}$ be any continuous map. The problem of \emph{simplicial approximation} consists in finding a simplicial map $f \colon K \rightarrow L$ with geometric realization $\topreal{f} \colon \topreal{K} \rightarrow \topreal{L}$ homotopy equivalent to $g$.
A way to solve this problem is to consider the following property \cite[Proof of Theorem 2C.1]{Hatcher_Algebraic}: we say that the map $g$ satisfies the \emph{star condition} if for every vertex $v$ of $K$, there exists a vertex $w$ of $L$ such that 
\begin{align*}
g\left(\topreal{ \closedStar{v} }\right) \subseteq \topreal{ \Star{w} }.
\end{align*}
\noindent
If this is the case, let $f \colon \skeleton{K}{0} \rightarrow \skeleton{L}{0}$ be any map between vertex sets such that for every vertex $v$ of $K$, we have $g\left(\topreal{ \closedStar{v}} \right) \subseteq \topreal{ \Star{f(v)} }$.
Equivalently, $f$ satisfies 
\begin{equation*}
g\left(\closedStar{v} \right) \subseteq \Star{f(v)}.
\end{equation*}
Such a map is called a \emph{simplicial approximation to $g$}. One shows that it is a simplicial map, and that its geometric realization $\topreal{f}$ is homotopic to $g$.
%

In general, a map $g$ may not satisfy the star condition. However, there is always a way to subdivise the simplicial complex $K$ in order to obtain an induced map which does. 
We describe this construction in the following paragraph.

We point out that, for some authors, such as \cite{Munkres84}, the star condition is defined by the property 
$g\left(\topreal{ \Star{v} }\right) \subseteq \topreal{ \Star{w} }$.
The defintion we used above, although harder to satisfy than this one, will be enough for our purposes.

\paragraph{Barycentric subdivisions.}
Let $\Delta^p$ denote the standard $p$-simplex, with vertices denoted $v_0, ..., v_p$. 
The \emph{barycentric subdivision} of $\Delta^p$ consists in decomposing $\Delta^p$ into $(p+1)!$ simplices of dimension $p$. It is a simplicial complex, whose vertex set corresponds to the points $\sum_{i=0}^p \lambda_i v_i$ for which some $\lambda_i$ are zero and the other ones are equal.
Equivalently, one can see this new set of vertices as a the power set of the set of vertices of $\Delta^p$.

More generally, if $K$ is a simplicial complex, its barycentric subdivision $\text{sub}(K)$ is the simplicial complex obtained by subdivising each of its faces. 
The set of vertices of $\text{sub}(K)$ can be seen as a subset of the power set of the set of vertices of $K$.

If $g\colon \topreal{K} \rightarrow \topreal{L}$ is any map, there exists a canonical extended map $\topreal{\text{sub}(K)} \rightarrow \topreal{L}$, still denoted $g$.
The \emph{simplicial approximation theorem} 
states that for any two simplicial complexes $K, L$ with $K$ finite, and $g \colon \topreal{K} \rightarrow \topreal{L}$ any a continuous map, there exists $n \geq 0$ such that $g \colon \topreal{\subdiv{K}{n}} \rightarrow \topreal{L}$ satisfies the star condition.
As a consequence, such a map $g \colon \topreal{\subdiv{K}{n}} \rightarrow \topreal{L}$ admits a simplicial approximation. 

\subsection{Persistent cohomology}
In this subsection, we write down the definitions of persistence modules, and their associated pseudo-distances, in the context of cohomology.
Compared to the standard definitions of persistent homology, the arrows go backward.  
Let $T \subseteq [0, +\infty)$ be an interval that contains $0$, let $E$ be a Euclidean space, and $k$ a field.

\paragraph{Persistence modules.}
A {\em persistence module} over $T$ is a pair $(\V, \vbb)$ where $\V = (V^t)_{t\in T}$ is a family of $k$-vector spaces, and $\vbb = (v_s^t)_{s\leq t \in T}$ is a family of linear maps $v_s^t\colon V^s \leftarrow V^t$ such that:
\begin{itemize}
\itemsep0em
\item for every $t\in T$, $v_t^t\colon V^t \leftarrow V^t$ is the identity map,
\item for every $r, s,t\in T$ such that $r\leq s\leq t$, $v_r^s \circ v_s^t = v_r^t$.
\end{itemize}
When there is no risk of confusion, we may denote a persistence module by $\V$ instead of $(\V, \vbb)$.
Given $\epsilon \geq 0$, an {\em $\epsilon$-morphism} between two persistence modules $(\V, \vbb)$ and $(\W, \w)$ is a family of linear maps $(\phi_t\colon V^t \rightarrow W^{t-\epsilon})_{t \geq \epsilon}$ such that the following diagram commutes for every $\epsilon \leq s \leq t$:
\begin{center}
\begin{tikzcd}
V^s \arrow["\phi_s", d]  &\arrow[l, "v_s^t"] \arrow["\phi_t", d] V^t \\
W^{s-\epsilon}  &\arrow[l, "w_{s-\epsilon}^{t-\epsilon}"] W^{t-\epsilon}
\end{tikzcd}
\end{center}
If $\epsilon = 0$ and each $\phi_t$ is an isomorphism, the family $(\phi_t)_{t \in T}$ is an {\em isomorphism} of persistence modules.
An {\em $\epsilon$-interleaving} between two persistence modules $(\V, \vbb)$ and $(\W, \w)$ is a pair of $\epsilon$-morphisms $(\phi_t\colon V^t \rightarrow W^{t-\epsilon})_{t \geq \epsilon}$ and $(\psi_t\colon W^t \rightarrow V^{t-\epsilon})_{t \geq \epsilon}$ such that the following diagrams commute for every $t \geq 2 \epsilon$: 
\begin{center}
\begin{minipage}[t]{0.4\textwidth}
\centering
\begin{tikzcd}
V^{t-2\epsilon}   & & \arrow[ll, "v_{t-2\epsilon}^t", swap] V^{t} \arrow[dl, "\phi_{t}"] \\
& W^{t-\epsilon}  \arrow[ul, "\psi_{t-\epsilon}"] & 
\end{tikzcd}
\end{minipage}
\begin{minipage}[t]{0.4\textwidth}
\centering
\begin{tikzcd}
& V^{t-\epsilon} \arrow[dl, "\phi_{t-\epsilon}", swap]  & \\
W^{t-2\epsilon}   & & W^t \arrow[ll, "w_{t-2\epsilon}^t"] \arrow[ul, "\psi_{t}", swap] 
\end{tikzcd}
\end{minipage}
\end{center}
The interleaving pseudo-distance between $(\V, \vbb)$ and $(\W, \w)$ is defined as 
$$\idist{\V}{\W} = \inf \{\epsilon \geq 0, ~\V \text{ and } \W \text{ are } \epsilon \text{-interleaved}\}.$$

\paragraph{Persistence barcodes.}
A persistence module $(\V, \vbb)$ is said to be {\em pointwise finite-dimensional} if for every $t \in T$, $V^t$ is finite-dimensional. This implies that we can define a notion of persistence barcode \citep{botnan2020decomposition}. It comes from the algebraic decomposition of the persistence module into interval modules. Moreover, given two pointwise finite-dimensional persistence  modules $\V, \W$ with persistence barcodes $\barcode{\V}$ and $\barcode{\W}$, the so-called isometry theorem states that 
\begin{align*}
\bdist{\barcode{\V}}{\barcode{\W}} = \idist{\V}{\W},
\end{align*}
where $\idist{\cdot}{\cdot}$ denotes the interleaving distance between persistence modules, and $\bdist{\cdot}{\cdot}$ denotes the bottleneck distance between barcodes.

More generally, the persistence module $(\V, \vbb)$ is said to be {\em $q$-tame} if for every $s,t \in T$ such that $s < t$, the map $v_s^t$ is of finite rank. The $q$-tameness of a persistence module ensures that we can still define a notion of persistence barcode, even though the module may not be decomposable into interval modules. Moreover, the isometry theorem still holds \citep{Chazal_Persistencemodules}.

\paragraph{Filtrations of sets and simplicial complexes.}
A family of subsets $\X=(X^t)_{t \in T}$ of $E$ is a {\em filtration} if it is non-decreasing for the inclusion, i.e. for any $s, t \in T$, if $s \leq t$ then $X^s \subseteq X^t$. 
Given $\epsilon\geq 0$, two filtrations $\X=(X^t)_{t \in T}$ and $\Y=(Y^t)_{t \in T}$ of $E$ are {\em $\epsilon$-interleaved} if, for every $t \in T$, $X^t \subseteq Y^{t+\epsilon}$ and $Y^t \subseteq X^{t+\epsilon}$. 
The interleaving pseudo-distance between $\X$ and $\Y$ is defined as the infimum of such $\epsilon$:
\[\idist{\X}{\Y} = \inf \{ \epsilon, ~\X \text{  and  }  \Y  \text{  are  }\epsilon\text{-interleaved} \}.\]

Filtrations of simplicial complexes and their interleaving distance are similarly defined: 
given a simplicial complex $S$, a {\em filtration of $S$} is a non-decreasing family $\S = (S^t)_{t \in T}$ of subcomplexes of $S$. The interleaving pseudo-distance between two filtrations $(S_1^t)_{t \in T}$ and $(S_2^t)_{t \in T}$ of $S$ is the infimum of the $\epsilon \geq 0$ such that they are $\epsilon$-interleaved, i.e., for any $t \in T$, we have $S_1^{t} \subseteq S_2^{t+\epsilon}$ and  $S_2^{t} \subseteq S_1^{t+\epsilon}$.

\paragraph{Relation between filtrations and persistence  modules.}
Applying the singular cohomology functor to a set filtration gives rise to a persistence module whose linear maps between cohomology groups are induced by the inclusion maps between sets. As a consequence, if two filtrations are $\epsilon$-interleaved, then their associated persistence modules are also $\epsilon$-interleaved, the interleaving homomorphisms being induced by the interleaving inclusion maps. As a consequence of the isometry theorem, if the modules are $q$-tame, then the bottleneck distance between their persistence barcodes is upperbounded by $\epsilon$.
The same remarks hold when applying the simplicial cohomology functor to simplicial filtrations.

\subsection{Notations}
We adopt the following notations:

\begin{itemize}
\itemsep0.12em 
\item $I$ denotes a set, $\card{I}$ its cardinal and $\complementaire{I}$ its complement.
\item if $i$ and $j$ are intergers such that $i\leq j$, $\llbracket i,j \rrbracket$ denotes the set of integers between $i$ and $j$ included.
\item $\R^n$ and $\R^m$ denote the Euclidean spaces of dimension $n$ and $m$, $E$ denotes a Euclidean space.
\item $\matrixspace{\R^m}$ denotes the vector space of $m \times m$ matrices, $\Grass{d}{\R^m}$ the Grassmannian of $d$-subspaces of $\R^m$, and $\S_k \subset \R^{k+1}$ the unit $k$-sphere.
\item $\eucN{\cdot}$ denotes the usual Euclidean norm on $\R^n$, $\frobN{\cdot}$ the Frobenius norm on $\matrixspace{\R^m}$, $\gammaN{\cdot}$ the norm on $\R^n \times \matrixspace{\R^m}$ defined as $\gammaN{(x,A)}^2 = \eucN{x}^2 + \gamma^2 \frobN{A}^2$ where $\gamma>0$ is a parameter.
\item $\X = (X^t)_{t\in T}$ denotes a set filtration. 
$\V[\X]$ denotes the corresponding persistent cohomology module.
If $X$ is a subset of $E$, then $\X = (X^t)_{t\in T}$ denotes the \v{C}ech set filtration of $X$ (also called the offset filtration).
\item $(\V, \vbb)$ denotes a persistence module over $T$, with $\V = (V^t)_{t \in T}$ a family of vector spaces, and $\vbb = (v_s^t \colon X^s \leftarrow X^t)_{s\leq t \in T}$ a family of linear maps.
\item $\UU$ denotes a cover of a topological space, and $\NN(\UU)$ its nerve. $\S = (S^t)_{t \in T}$ denotes a simplicial filtration.
\item If $X$ is a topological space, $H^*(X)$ denotes its cohomology ring over $\Zd$ (the field with two elements), and $H^i(X)$ its $i^\text{th}$ cohomology group over $\Zd$. If $f\colon X\rightarrow Y$ is a continuous map, $f^*\colon H^*(X)\leftarrow H^*(Y)$ is the map induced in cohomology. 
\item If $\xi$ is a vector bundle, $w_i(\xi)$ denotes its $i^\text{th}$ Stiefel-Whitney class. 
\item If $A$ is a subset of $E$, then $\med{A}$ denotes its medial axis, $\reach{A}$ its reach and $\dist{\cdot}{A}$ the distance to $A$. 
The projection on $A$ is denoted $\proj{\cdot}{A}$ or $\projj{\cdot}{A}$.
$\Hdist{\cdot}{\cdot}$ denotes the Hausdorff distance between two sets of $E$.
\item If $K$ is a simplicial complex, $\skeleton{K}{i}$ denotes its $i$-skeleton. For every vertex $v \in \skeleton{K}{0}$, $\Star{v}$ and $\closedStar{v}$ denote its open and closed star.
The geometric realization of $K$ is denoted $\topreal{K}$, and the geometric realization of a simplex $\sigma \in K$ is $\topreal{\sigma}$.
The face map is denoted $\facemapK{K} \colon \topreal{K} \rightarrow K$.
\item If $f\colon K \rightarrow L$ is a simplicial map, $\topreal{f} \colon \topreal{K} \rightarrow \topreal{L}$ denotes its geometric realization. The $i^\text{th}$ barycentric subdivision of the simplicial complex $K$ is denoted $\subdiv{K}{i}$.
\end{itemize}

\section{Persistent Stiefel-Whitney classes}
\label{sec:persistentSWclasses}
\subsection{Definition}
Let $E = \R^n$ be a Euclidean space, and $\X = (X^t)_{t\in T}$ a set filtration of $E$.
Let us denote by $i_s^t$ the inclusion map from $X^s$ to $X^t$.
In order to define persistent Stiefel-Whitney classes, we have to give such a filtration a vector bundle structure.
The infinite Grassmann manifold is denoted $\Grass{d}{\R^\infty}$.

\begin{definition}
A \emph{vector bundle filtration} of dimension $d$ on $E$ is a couple $(\X,\p)$ where $\X = (X^t)_{t \in T}$ is a set filtration of $E$ and $\p = (p^t)_{t \in T}$ a family of continuous maps $p^t \colon X^t \rightarrow \Grass{d}{\R^\infty}$ such that, for every $s,t \in T$ with $s \leq t$, we have $p^t \circ i_s^t = p^s$.
In other words, the following diagram commutes:
\begin{center}
\begin{tikzcd}
X^s \arrow[dr, "p^s", swap] \arrow[rr, "i_s^t"]&  & X^t \arrow[dl, "p^t"] \\
& \Grass{d}{\R^\infty} &
\end{tikzcd}
\end{center}
\end{definition}

Note that for any $m \in \N$, and by using the inclusion $\Grass{d}{\R^m} \hookrightarrow \Grass{d}{\R^\infty}$, one may define a vector bundle filtration by considering maps $p^t \colon X^t \rightarrow \Grass{d}{\R^m}$.

Following Subsect. \ref{background:SFclasses}, the induced map in cohomology, $(p^t)^*$, allows to define the Stiefel-Whitney classes of this vector bundle. 
Let us introduce some notations.
The Stiefel-Whitney classes of $\Grass{d}{\R^\infty}$ are denoted $w_1, \dots, w_d$.
The Stiefel-Whitney classes of the vector bundle $(X^t, p^t)$ are denoted $w_1^t(\p), \dots, w_d^t(\p)$, and can be defined as $w_i^t(\p) = (p^t)^*(w_i)$.

\begin{center}
\begin{tikzcd}[row sep=tiny]
  (p^t)^* \colon\bigcohomring{X^t} 
& \bigcohomring{\Grass{d}{\R^\infty}} \arrow[l] \\
~~~~~~~~ w_1^t(\p) & w_1 \arrow[l, mapsto] \\
 ~ \arrow[r,"\vdots", phantom] & ~ \\
~~~~~~~~ w_d^t(\p) & w_d \arrow[l, mapsto]
\end{tikzcd}
\end{center}
\noindent
Let $(\V, \vbb)$ denote the persistence module associated to the filtration $\X$, with $\V = (V^t)_{t \in T}$ and $\vbb = (v_s^t)_{s\leq t \in T}$. 
Explicitly, $V^t$ is the cohomology ring $\cohomring{X^t}$, and $v_s^t$ is the induced map $\cohomring{X^s} \leftarrow \cohomring{X^t}$.
For every $t \in T$, the classes $w_1^t(\p), \dots, w_d^t(\p)$ belong to the vector space $V^t$.
The persistent Stiefel-Whitney classes are defined to be the collection of such classes over $t$.

\begin{definition}
\label{def:persistent_SF_classes}
Let $(\X, \p)$ be a vector bundle filtration.
The \emph{persistent Stiefel-Whitney classes} of $(\X, \p)$ are the families of classes
\begin{align*}
w_1(\p) &= \big(w_1^t(\p)\big)_{t \in T} \\
&\vdots  \\
w_d(\p)& = \big(w_d^t(\p)\big)_{t \in T}.
\end{align*}
\end{definition}

Let $i \in \llbracket 1,d\rrbracket$, and consider a persistent Stiefel-Whitney class $w_i(\p)$.
Note that it satisfies the following property: for all $s,t \in T$ such that $s\leq t$, $w_i^s(\p) = v_s^t\big( w_i^t(\p) \big)$. 
As a consequence, if a class $w_i^t(\p)$ is given for a $t \in T$, one obtains all the others $w_i^s(\p)$, with $s \leq t$, by applying the maps $v_s^t$.
In particular, if $w_i^t(\p) = 0$, then $w_i^s(\p) = 0$ for all $s \in T$ such that $s \leq t$.

\paragraph{Lifebar.}
In order to visualize the evolution of a persistent Stiefel-Whitney class through the persistence module $(\V, \vbb)$, we propose the following bar representation: the lifebar of $w_i(\p)$ is the set 
\begin{align*}
\left\{ t \in T, w_i^t(\p) \neq 0 \right\}.
\end{align*}
According to the last paragraph, the lifebar of a persistent class is an interval of $T$, of the form $[\tdeatho, \sup(T))$ or $(\tdeatho, \sup(T))$, where 
\begin{align*}
\tdeatho = \inf \left\{t \in T, w_i^t(\p) \neq 0\right\},
\end{align*}
with the convention $\inf (\emptyset) = \inf (T)$.
In order to distinguish the lifebar of a persistent Stiefel-Whitney class from the bars of the persistence barcodes, we draw the rest of the interval hatched (see Figure \ref{fig:1}).
\begin{figure}[H]
\centering
\includegraphics[width=0.6\linewidth]{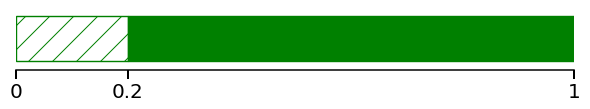}
\caption{Example of a lifebar of a persistent Stiefel-Whitney class with $t^\dagger = 0{.}2$ and $\max(T)=1$.}
\label{fig:1}
\end{figure}

\subsection{\v{C}ech bundle filtrations}
\label{subsec:filtered_cech_bundle}
In this subsection, we propose a particular construction of vector bundle filtration, called the \emph{\v{C}ech bundle filtration}.
We shall work in the ambient space $E = \R^n \times \matrixspace{\R^m}$.
Let $\eucN{\cdot}$ be the usual Euclidean norm on the space $\R^n$, and $\frobN{\cdot}$ the Frobenius norm on $\matrixspace{\R^m}$, the space of $m\times m$ matrices.
Let $\gamma > 0$. We endow the vector space $E$ with the Euclidean norm $\gammaN{\cdot}$ defined for every $(x,A) \in E$ as 
\begin{equation}
\gammaN{(x,A)}^2 = \eucN{x}^2 + \gamma^2 \frobN{A}^2.
\label{eq:gammaN}
\end{equation}
See Subsection \ref{subsec:choice_of_gamma} for a discussion about the parameter $\gamma$.

In order to define the \v{C}ech bundle filtration, we shall first study the usual embedding of the Grassmann manifold $\Grass{d}{\R^m}$ into the matrix space $\matrixspace{\R^m}$.

\paragraph{Embedding of $\Grass{d}{\R^m}$.}
We embed the Grassmannian $\Grass{d}{\R^m}$ into $\matrixspace{\R^m}$ via the application which sends a $d$-dimensional subspace $T \subset \R^m$ to its orthogonal projection matrix $\projmatrix{T}$.
We can now see $\Grass{d}{\R^m}$ as a submanifold of $\matrixspace{\R^m}$. 
Recall that $\matrixspace{\R^m}$ is endowed with the Frobenius norm. According to this metric, $\Grass{d}{\R^m}$ is included in the sphere of center 0 and radius $\sqrt{d}$ of $\matrixspace{\R^m}$.

In the metric space $(\matrixspace{\R^m}, \frobN{\cdot})$, consider the distance function to $\Grass{d}{\R^m}$, denoted $\dist{\cdot}{\Grass{d}{\R^m}}$.
Let $\med{\Grass{d}{\R^m}}$ denote the medial axis of $\Grass{d}{\R^m}$.
It consists in the points $A \in \matrixspace{\R^m}$ which admit at least two projections on $\Grass{d}{\R^m}$: 
\begin{align*}
\med{\Grass{d}{\R^m}} = \{A \in \matrixspace{\R^m}, \exists P, P' &\in \Grass{d}{\R^m}, P \neq P', \\
&\frobN{A - P} = \frobN{A - P} = \dist{A}{\Grass{d}{\R^m}} \}.
\end{align*}


\noindent
On the set $\matrixspace{\R^m} \setminus \med{\Grass{d}{\R^m}}$, the projection on $\Grass{d}{\R^m}$ is well-defined:
\begin{align*}
\proj{\cdot}{\Grass{d}{\R^m}} \colon \matrixspace{\R^m} \setminus \med{\Grass{d}{\R^m}} &\longrightarrow \Grass{d}{\R^m} \subset \matrixspace{\R^m} \\
A &\longmapsto P~ \text{ s.t. } \frobN{P-A} = \dist{A}{\Grass{d}{\R^m}}.
\end{align*}
The following lemma describes this projection explicitly.

\begin{lemma}
\label{lem:projongrass}
For any $A \in \matrixspace{\R^m}$, let $A^s$ denote the matrix $A^s = \frac{1}{2}(A + \transp{A})$, where $\transp{A}$ is the transpose of $A$, and let $\lambda_1(A^s), ..., \lambda_m(A^s)$ be the eigenvalues of $A^s$ in decreasing order.
The distance from $A$ to $\med{\Grass{d}{\R^m}}$ is
\begin{align*}
\dist{A}{\med{\Grass{d}{\R^m}}} = \frac{\sqrt{2}}{2} \big|\lambda_d(A^s) - \lambda_{d+1}(A^s)\big|.
\end{align*}
If this distance is positive, the projection of $A$ on $\Grass{d}{\R^m}$ can be described as follows: consider the symmetric matrix $A^s$, and let $A^s = O D \transp{O}$, with $O$ an orthogonal matrix, and $D$ the diagonal matrix containing the eigenvalues of $A^s$ in decreasing order. Let $J_d$ be the diagonal matrix whose first $d$ terms are 1, and the other ones are zero. We have 
\begin{align*}
\proj{A}{\Grass{d}{\R^m}} = O J_d \transp{O}.
\end{align*}
\end{lemma}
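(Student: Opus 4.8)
The plan is to reduce the problem from the full matrix space $\matrixspace{\R^m}$ to the subspace of symmetric matrices, because the orthogonal projection matrices $\projmatrix{T}$ that constitute $\Grass{d}{\R^m}$ are all symmetric. First I would observe that for any $A \in \matrixspace{\R^m}$ and any symmetric matrix $P$, the Frobenius distance decomposes using the orthogonal (with respect to the Frobenius inner product) splitting $A = A^s + A^a$, where $A^s = \frac{1}{2}(A + \transp{A})$ and $A^a = \frac{1}{2}(A - \transp{A})$. Since $\frobP{A^a}{P} = 0$ whenever $P$ is symmetric, we get $\frobN{A - P}^2 = \frobN{A^a}^2 + \frobN{A^s - P}^2$. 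This shows that the nearest point(s) of $\Grass{d}{\R^m}$ to $A$ coincide with the nearest point(s) to $A^s$, so both the distance computation and the projection only depend on $A^s$, and I may assume $A$ symmetric from the start.

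Next I would compute $\frobN{A^s - P}^2$ explicitly in terms of the spectrum of $A^s$. Writing $A^s = O D \transp{O}$ with $D = \mathrm{diag}(\lambda_1, \dots, \lambda_m)$ in decreasing order, and parametrizing $P = U J_d \transp{U}$ over all rank-$d$ orthogonal projectors, I would expand $\frobN{A^s - P}^2 = \frobN{A^s}^2 + \frobN{P}^2 - 2\frobP{A^s}{P} = \frobN{A^s}^2 + d - 2\,\trace{A^s P}$. Minimizing the distance is therefore equivalent to maximizing $\trace{A^s P}$ over rank-$d$ projectors $P$; by Ky Fan's maximum principle this maximum equals $\sum_{i=1}^d \lambda_i$ and is achieved exactly by the spectral projector onto the top-$d$ eigenspace, which is $P = O J_d \transp{O}$. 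This simultaneously gives the formula $\proj{A}{\Grass{d}{\R^m}} = O J_d \transp{O}$ and establishes that the projection is unique precisely when the top-$d$ eigenspace is unambiguous, i.e. when $\lambda_d > \lambda_{d+1}$.

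For the medial-axis distance, I would characterize $\med{\Grass{d}{\R^m}}$ as the set of $A^s$ for which the maximizer of $\trace{A^s P}$ is non-unique, which happens exactly when $\lambda_d(A^s) = \lambda_{d+1}(A^s)$. The distance $\dist{A}{\med{\Grass{d}{\R^m}}}$ should then be computed by finding the nearest symmetric matrix whose $d$-th and $(d+1)$-th eigenvalues coincide. The cleanest route is to move within the same eigenbasis $O$: replacing $\lambda_d$ and $\lambda_{d+1}$ by their common average $\frac{1}{2}(\lambda_d + \lambda_{d+1})$ and leaving the other eigenvalues fixed costs Frobenius distance $\frac{\sqrt 2}{2}\,|\lambda_d - \lambda_{d+1}|$, and I would argue this is optimal. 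The main obstacle is the lower bound: showing that one cannot reach the medial axis more cheaply by perturbing the eigenvectors as well. I expect this to follow from Weyl's inequality, $|\lambda_i(A^s) - \lambda_i(B^s)| \leq \frobN{A^s - B^s}$, applied to indices $d$ and $d+1$ — if $B^s$ lies on the medial axis then $\lambda_d(B^s) = \lambda_{d+1}(B^s)$, forcing $\frobN{A^s - B^s}$ to close the gap $\lambda_d(A^s) - \lambda_{d+1}(A^s)$ across two eigenvalues, which yields the factor $\frac{\sqrt 2}{2}$ after combining the two Weyl estimates. Finally, passing back from symmetric matrices to the full space contributes the component $\frobN{A^a}$ orthogonally, but since the nearest medial-axis point shares the same antisymmetric part this contribution cancels and the stated formula for $\dist{A}{\med{\Grass{d}{\R^m}}}$ depends only on $A^s$, as claimed.
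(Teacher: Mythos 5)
Your overall strategy---reduce to the symmetric part via the Frobenius-orthogonal splitting $A = A^s + A^a$, identify the maximizer of $\trace{A^s P}$ over rank-$d$ projectors as the top-$d$ spectral projector, characterize $\med{\Grass{d}{\R^m}}$ by the condition $\lambda_d = \lambda_{d+1}$, and get the upper bound on the medial-axis distance by averaging the two middle eigenvalues---is exactly the paper's route (the paper proves the Ky Fan step by an elementary rearrangement argument rather than citing the theorem, which is a cosmetic difference). The one genuine gap is your lower bound for $\dist{A}{\med{\Grass{d}{\R^m}}}$. The inequality you invoke, $|\lambda_i(A^s) - \lambda_i(B^s)| \leq \frobN{A^s - B^s}$ for each fixed index $i$, cannot produce the constant $\frac{\sqrt{2}}{2}$: if $B^s$ lies on the medial axis with $\lambda_d(B^s) = \lambda_{d+1}(B^s) = \mu$, then combining the two estimates at indices $d$ and $d+1$ only gives $\lambda_d(A^s) - \lambda_{d+1}(A^s) \leq 2\,\frobN{A^s - B^s}$, i.e. $\frobN{A^s - B^s} \geq \frac{1}{2}\big|\lambda_d(A^s) - \lambda_{d+1}(A^s)\big|$, which falls short of the claimed sharp value.

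To close the gap you need the $\ell^2$ version of eigenvalue perturbation, namely the Hoffman--Wielandt (Mirsky) inequality for symmetric matrices: $\sum_i \big(\lambda_i(A^s) - \lambda_i(B^s)\big)^2 \leq \frobN{A^s - B^s}^2$ with both spectra in decreasing order. With $\lambda_d(B^s) = \lambda_{d+1}(B^s) = \mu$ this gives $\frobN{A^s - B^s}^2 \geq (\lambda_d(A^s) - \mu)^2 + (\lambda_{d+1}(A^s) - \mu)^2 \geq \frac{1}{2}\big(\lambda_d(A^s) - \lambda_{d+1}(A^s)\big)^2$, the minimum over $\mu$ being attained at the midpoint; this matches your upper bound and yields the stated formula. (The paper instead argues directly in the eigenbasis of $A^s$ that the nearest medial-axis point must be diagonal there with the two middle eigenvalues replaced by their average; that step is asserted rather than fully justified, so your route, once repaired with Hoffman--Wielandt, is arguably the cleaner one.) Your final remark that the antisymmetric part contributes orthogonally and cancels is correct and matches the paper's observation that $\dist{A}{\med{\Grass{d}{\R^m}}} = \dist{A^s}{\med{\Grass{d}{\R^m}}}$.
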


\begin{proof}
\label{appendix:proof:projongrass}
Note that $\Grass{d}{\R^m}$ is contained in the linear subspace $\SS$ of symmetric matrices.
Therefore, to project a matrix $A \in \matrixspace{\R^m}$ onto $\Grass{d}{\R^m}$, we may project on $\SS$ first.
It is well known that the projection of $A$ onto $\SS$ is the matrix $A^s = \frac{1}{2}(A + \transp{A})$.

Suppose now that we are given a symmetric matrix $B$. Let it be diagonalized as $B = O D \transp{O}$ with $O$ an orthogonal matrix.
A projection of $B$ onto $\Grass{d}{\R^m}$ is a matrix $P$ which minimizes the following quantity:
\begin{equation}
\label{eq:min_grass}
\min_{P \in \Grass{d}{E}} \frobN{ B - P }.
\end{equation}
By applying the transformation $P \mapsto \transp{O} P O$, we see that this problem is equivalent to $\min_{P \in \Grass{d}{E}} \frobN{ D - P }$.
Now, let $e_1, \cdots, e_m$ denote the canonical basis of $\R^m$.
We have
\begin{align*}
\frobN{ D - P}^2 
&=  \frobN{D}^2 + \frobN{P}^2 - 2\frobP{D}{P} \\
&=  \frobN{D}^2 + \frobN{P}^2 - 2\sum  \eucP{\lambda_i e_i}{P(e_i)},
\end{align*}
where $\frobP{\cdot}{\cdot}$ is the Frobenius inner product, and $\eucP{\cdot}{\cdot}$ the usual inner product on $\R^m$.
Therefore, Equation \eqref{eq:min_grass} is a problem equivalent to
\begin{align*}
\max_{P\in \Grass{d}{E}} \sum \lambda_i \eucP{e_i}{P(e_i)}.
\end{align*}
Since $P$ is an orthogonal projection, we have $\eucP{e_i}{P(e_i)} = \eucP{P(e_i)}{P(e_i)} = \eucN{P(e_i)}^2$ for all $i \in \llbracket1,m\rrbracket$.
Moreover, $d = \frobN{P}^2 = \sum \eucN{P(e_i)}^2$.
Denoting $p_i = \eucN{P(e_i)}^2 \in [0,1]$, we finally obtain the following alternative formulation of Equation \eqref{eq:min_grass}:
\begin{align*}
\max_{\substack{p_1,...p_m \in [0,1] \\ p_1+...+p_m = d}} \sum \lambda_i p_i.
\end{align*}
Using that $\lambda_1 \geq ... \geq \lambda_m$, we see that this maximum is attained when $p_1 = ... = p_d=1$ and $p_{d+1}=...=p_m = 0$. 
Consequently, a minimizer of Equation \eqref{eq:min_grass} is $P = J_d$, where $J_d$ is the diagonal matrix whose first $d$ terms are 1, and the other ones are zero.
Moreover, it is unique if $\lambda_d \neq \lambda_{d+1}$.
As a consequence of these considerations, we obtain the following characterization: for every $B\in \matrixspace{\R^m}$,
\begin{equation}
\label{eq:med_Gd}
B \in \med{\Grass{d}{\R^m}} \iff \lambda_d(B^s) = \lambda_{d+1}(B^s).
\end{equation}

Let us now show that for every matrix $A\in \matrixspace{\R^m}$, we have
\begin{align*}
\dist{A}{\med{\Grass{d}{\R^m}}} = \frac{\sqrt{2}}{2} \big|\lambda_d(A^s) - \lambda_{d+1}(A^s)\big|.
\end{align*}
First, remark that
\begin{equation}
\label{eq:proj_on_med}
\dist{A}{\med{\Grass{d}{\R^m}}} = \dist{A^s}{\med{\Grass{d}{\R^m}}}.
\end{equation}
Indeed, if $B$ is a projection of $A$ on $\med{\Grass{d}{\R^m}}$, then $B^s$ is still in $\med{\Grass{d}{\R^m}}$ according to Equation \eqref{eq:med_Gd}, and 
\begin{align*}
\dist{A}{\med{\Grass{d}{\R^m}}} = \frobN{A-B} \geq \frobN{A^s - B^s} \geq \dist{A^s}{\med{\Grass{d}{\R^m}}}.
\end{align*}
Conversely, if $B$ is a projection of $A^s$ on $\med{\Grass{d}{\R^m}}$, then $\hat B = B + A - A^s$ is still in $\med{\Grass{d}{\R^m}}$, and \begin{align*}
\dist{A}{\med{\Grass{d}{\R^m}}} \leq \frobN{A - \hat B} = \frobN{A^s - B} = \dist{A^s}{\med{\Grass{d}{\R^m}}}.
\end{align*}
We deduce Equation \eqref{eq:proj_on_med}.
Now, let $A \in \SS$ and $B \in \med{\Grass{d}{\R^m}}$. 
Let $e_1,...,e_m$ be a basis of $\R^m$ that diagonalizes $A$.
Writing $\frobN{A-B} = \sum \eucN{A(e_i) - B(e_i)}^2 = \sum \eucN{\lambda_i(A) e_i - B(e_i)}^2$, it is clear that the closest matrix $B$ must satisfy $B(e_i) = \lambda_i(B) e_i$, with
\begin{itemize}
\item $\lambda_i(B) = \lambda_i(A)$ for $i \notin  \{d,d+1\}$,
\item $\lambda_d(B) = \lambda_{d+1}(B) = \frac{1}{2}(\lambda_d(A)+\lambda_{d+1}(A))$.
\end{itemize}
We finally compute
\begin{align*}
\frobN{A-B}^2 
&= \sum \eucN{\lambda_i(A) e_i - \lambda_i(B) e_i}^2 \\
&= \left\lvert\lambda_d(A) - \lambda_d(B) \right\rvert^2 + \left \lvert\lambda_{d+1}(A) - \lambda_{d+1}(B) \right\rvert^2 \\
&= \frac{1}{2} \left\lvert \lambda_d(A) - \lambda_{d+1}(A)\right\rvert^2
\end{align*}
which yields the result.
\end{proof}

Observe that, as a consequence of this lemma, every point of $\Grass{d}{\R^m}$ is at equal distance from $\med{\Grass{d}{\R^m}}$, and this distance is equal to $\frac{\sqrt{2}}{2}$. Therefore the reach of the subset $\Grass{d}{\R^m} \subset \matrixspace{\R^m}$ is 
\begin{align*}
\reach{\Grass{d}{\R^m}} = \frac{\sqrt{2}}{2}. 
\end{align*}

\paragraph{\v{C}ech bundle filtration.}
Let $X$ be a subset of $E= \R^n \times \matrixspace{\R^m}$.
Consider the usual \v{C}ech filtration $\X = (X^t)_{t \geq 0}$, where $X^t$ denotes the $t$-thickening of $\check{X}$ in the metric space $(E, \gammaN{\cdot})$.
It is also known as the offset filtration.
In order to give this filtration a vector bundle structure, consider the map $p^t$ defined as the composition
\begin{equation}
\begin{tikzcd}[baseline=(current  bounding  box.center), column sep = 5.3em]
X^t \subset \R^n \times \matrixspace{\R^m} \arrow[r, "\mathrm{proj}_2"] 
& \matrixspace{\R^m} \setminus \med{\Grass{d}{\R^m}} \arrow[r, "\proj{\cdot}{\Grass{d}{\R^m}}"]
& \Grass{d}{\R^m},
\end{tikzcd}
\label{eq:def_cech_bundle_proj}
\end{equation}
where $\mathrm{proj}_2$ represents the projection on the second coordinate of $\R^n \times \matrixspace{\R^m}$, and $\proj{\cdot}{\Grass{d}{\R^m}}$ the projection on $\Grass{d}{\R^m} \subset \matrixspace{\R^m}$.
Note that $p^t$ is well-defined only when $X^t$ does not intersect $\R^n \times \med{\Grass{d}{\R^m}}$. The supremum of such $t$'s is denoted $\tmaxgamma{X}$. We have
\begin{equation}
\label{eq:tmaxgamma}
\tmaxgamma{X} = \inf \left\{ \distgamma{x}{\R^n \times \med{ \Grass{d}{\R^m} }}, ~x \in X \right\},
\end{equation}
where $\distgamma{x}{\R^n \times \med{ \Grass{d}{\R^m} }}$ is the distance between the point $x \in \R^n \times \matrixspace{\R^m}$ and the subspace $\R^n \times \med{ \Grass{d}{\R^m}}$, with respect to the norm $\gammaN{\cdot}$.
By definition of $\gammaN{\cdot}$, Equation \eqref{eq:tmaxgamma} rewrites as
\begin{equation*}
\tmaxgamma{X} = \gamma \cdot \inf \{ \dist{A}{\med{ \Grass{d}{\R^m} }}, ~(y,A) \in X \}, 
\end{equation*}
where $\dist{A}{\med{ \Grass{d}{\R^m} }}$ represents the distance between the matrix $A$ and the subset $\med{ \Grass{d}{\R^m}}$ with respect to the Frobenius norm $\frobN{\cdot}$.
Denoting $\tmax{X}$ the value $\tmaxgamma{X}$ for $\gamma = 1$, we obtain 
\begin{align}
\label{eq:tmax}
\begin{split}
\tmaxgamma{X} &= \gamma \cdot \tmax{X} \\
\mathrm{and}~~~~~~~\tmax{X} &=\inf \{ \dist{A}{\med{ \Grass{d}{\R^m} }}, ~(y,A) \in X \}.
\end{split}
\end{align}
Note that the values $\tmax{X}$ can be computed explicitly thanks to Lemma \ref{lem:projongrass}.
In particular, if $X$ is a subset of $\R^n \times \Grass{d}{\R^m}$, then $\tmax{X} = \frac{\sqrt{2}}{2}$. Accordingly,
\begin{equation}
\label{eq:tmax_subset_grass}
\tmaxgamma{X} = \frac{\sqrt{2}}{2} \gamma.
\end{equation} 

\begin{definition}
\label{def:filtered_cech_bundle}
Consider a subset $X$ of $E=\R^n \times \matrixspace{\R^m}$, and suppose that $\tmax{X} > 0$.
The \emph{\v{C}ech bundle filtration} associated to $X$ in the ambient space $(E, \gammaN{\cdot})$ is the vector bundle filtration $(\X, \p)$ consisting of the \v{C}ech filtration $\X = (X^t)_{t \in T}$, and the maps $\p = (p^t)_{t \in T}$ as defined in Equation \eqref{eq:def_cech_bundle_proj}. This vector bundle filtration is defined on the index set $T = \left[0, \tmaxgamma{X} \right)$, where $\tmaxgamma{X}$ is defined in Equation \eqref{eq:tmax}.
\end{definition}

The $i^\text{th}$ persistent Stiefel-Whitney class of the \v{C}ech bundle filtration $(\X, \p)$, as in Definition \ref{def:persistent_SF_classes}, shall be denoted $w_i(X)$ instead of $w_i(\p)$.

\begin{example}
\label{ex:normal_mobius}
Let $E = \R^2 \times \matrixspace{\R^2}$.
Let $X$ and $Y$ be the subsets of $E$ defined as:
\begin{align*}
&X = \bigg\{
\bigg( 
\begin{pmatrix}
\cos(\theta)  \\
\sin(\theta) 
\end{pmatrix}
,
\begin{pmatrix}
\cos(\theta)^2 & \cos(\theta) \sin(\theta) \\
\cos(\theta) \sin(\theta) & \sin(\theta)^2 
\end{pmatrix}
\bigg),
\theta \in [0, 2\pi)  \bigg\} \\
&Y = \bigg\{
\bigg( 
\begin{pmatrix}
\cos(\theta)  \\
\sin(\theta) 
\end{pmatrix}
,
\begin{pmatrix}
\cos(\frac{\theta}{2})^2 & \cos(\frac{\theta}{2}) \sin(\frac{\theta}{2}) \\
\cos(\frac{\theta}{2}) \sin(\frac{\theta}{2}) & \sin(\frac{\theta}{2})^2 
\end{pmatrix}
\bigg),
\theta \in [0, 2\pi)  \bigg\}
\end{align*}
The set $X$ is to be seen as the normal bundle of the circle, and $Y$ as the tautological bundle of the circle, also known as the Mobius strip.
They are pictured in Figures \ref{fig:2} and \ref{fig:3}.
We have $\tmax{X} = \tmax{Y} = \frac{\sqrt{2}}{2}$ as in Lemma \ref{lem:projongrass}.
Let $\gamma = 1$.

\begin{figure}[H]
\begin{minipage}{.49\linewidth}
\centering
\includegraphics[width=.6\linewidth]{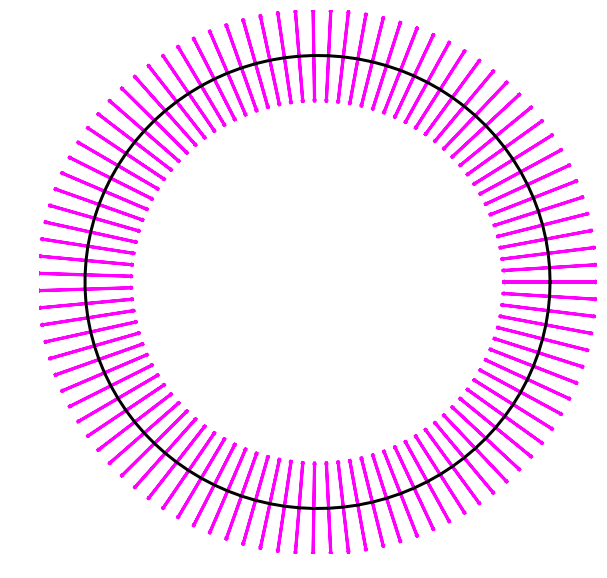}
\end{minipage}
\begin{minipage}{.49\linewidth}
\centering
\includegraphics[width=.6\linewidth]{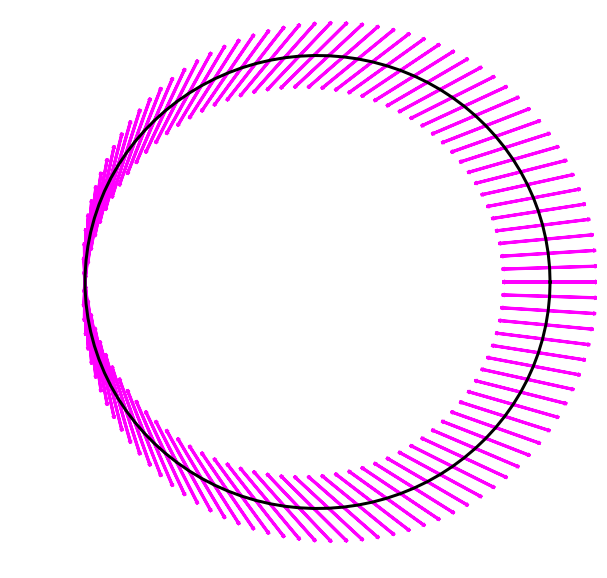}
\end{minipage}
\caption{Representation of the sets $X$ and $Y \subset \R^2 \times \matrixspace{\R^2}$: the black points correspond to the $\R^2$-coordinate, and the pink segments over them correspond to the orientation of the $\matrixspace{\R^2}$-coordinate.}
\label{fig:2}
\end{figure}

\begin{figure}[H]
\begin{minipage}{.49\linewidth}
\centering
\includegraphics[width=.6\linewidth]{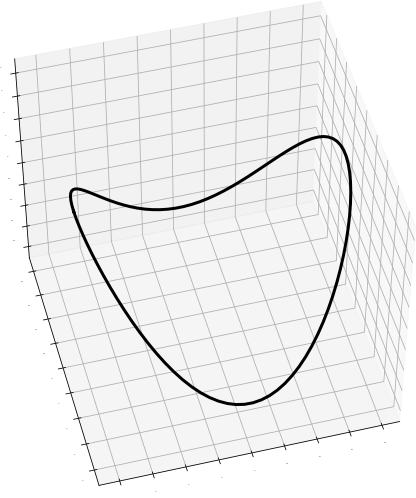}
\end{minipage}
\begin{minipage}{.49\linewidth}
\centering
\includegraphics[width=.6\linewidth]{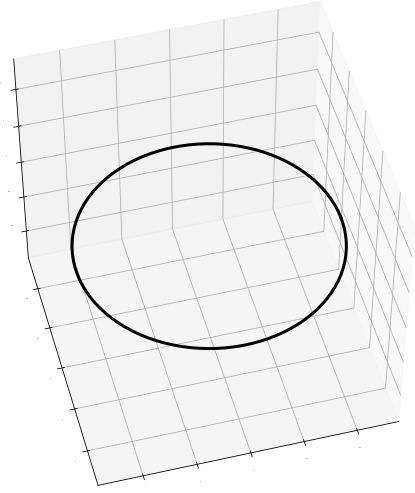}
\end{minipage}
\caption{The sets $X$ and $Y \subset \R^2 \times \matrixspace{\R^2}$, projected in a 3-dimensional subspace of $\R^3$ via Principal Component Analysis.}
\label{fig:3}
\end{figure}

We now compute the persistence barcodes of the \v{C}ech filtrations of $X$ and $Y$ in the ambient space $E$, as represented in Figure \ref{fig:4}.

\begin{figure}[H]
\begin{minipage}{.49\linewidth}
\centering
\includegraphics[width=.9\linewidth]{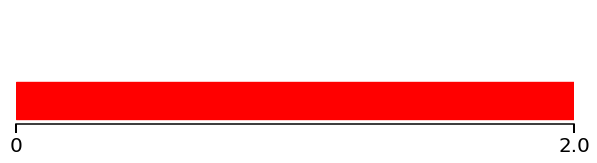}
\includegraphics[width=.9\linewidth]{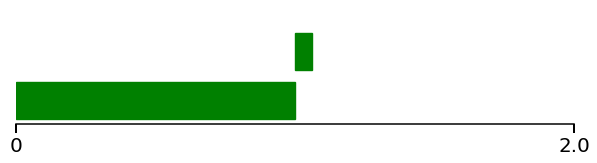}
\end{minipage}
\begin{minipage}{.49\linewidth}
\centering
\includegraphics[width=.9\linewidth]{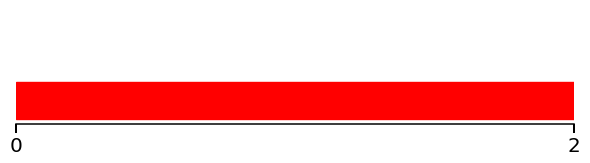}
\includegraphics[width=.9\linewidth]{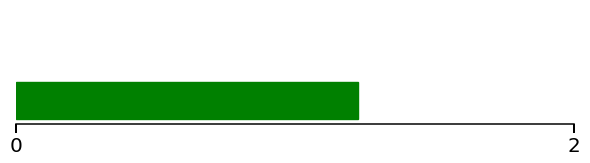}
\end{minipage}
\caption{$H^0$ and $H^1$ persistence barcodes of the \v{C}ech filtration of $X$ (left) and $Y$ (right).}
\label{fig:4}
\end{figure}

\noindent
Consider the first persistent Stiefel-Whitney classes $w_1(X)$ and $w_1(Y)$ of the corresponding \v{C}ech bundle filtrations. We compute that their lifebars are $\emptyset$ for $w_1(X)$, and $\left[0, \tmax{Y}\right)$ for $w_1(Y)$. This is illustrated in Figure \ref{fig:5}. 
One reads these bars as follows: $w_1^t(X)$ is zero for every $t \in \left[0, \frac{\sqrt{2}}{2}\right)$, while $w_1^t(Y)$ is nonzero. 

\begin{figure}[H]
\begin{minipage}{.49\linewidth}
\centering
\includegraphics[width=.9\linewidth]{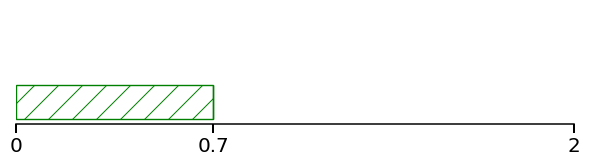}
\end{minipage}
\begin{minipage}{.49\linewidth}
\centering
\includegraphics[width=.9\linewidth]{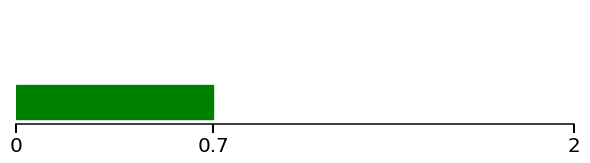}
\end{minipage}
\caption{Lifebars of the first persistent Stiefel-Whitney classes $w_1(X)$ and $w_1(Y)$.}
\label{fig:5}
\end{figure}
\end{example}

\subsection{Stability}
In this subsection we derive a straightforward stability result for persistent Stiefel-Whitney classes. 
We start by defining a notion of interleavings for vector bundle filtrations, in the same vein as the usual interleavings of set filtrations.

\begin{definition}
\label{def:interleavings_bundles}
Let $\epsilon \geq 0$, and consider two vector bundle filtrations $(\X, \p)$, $(\Y,\q)$ of dimension $d$ on $E$ with respective index sets $T$ and $U$.
They are \emph{$\epsilon$-interleaved} if the underlying filtrations $\X = (X^t)_{t \in T}$ and $\Y = (Y^t)_{t \in U}$ are $\epsilon$-interleaved, and if the following diagrams commute for every $t \in T \cap (U - \epsilon)$ and $s \in U \cap (T - \epsilon)$:
\begin{center}
\begin{minipage}{.49\linewidth}
\centering
\begin{tikzcd}
X^t \arrow[dr, "p^t", swap] \arrow[rr, hook]&  & Y^{t+\epsilon} \arrow[dl, "q^{t+\epsilon}"] \\
& \Grass{d}{\R^\infty} &
\end{tikzcd}
\end{minipage}
\begin{minipage}{.49\linewidth}
\centering
\begin{tikzcd}
Y^s \arrow[dr, "q^s", swap] \arrow[rr, hook]&  & X^{s+\epsilon} \arrow[dl, "q^{s+\epsilon}"] \\
& \Grass{d}{\R^\infty} &
\end{tikzcd}
\end{minipage}
\end{center}
\end{definition}

The following theorem shows that interleavings of vector bundle filtrations give rise to interleavings of persistence modules which respect the persistent Stiefel-Whitney classes.

\begin{theorem}
\label{thm:stability}
Consider two vector bundle filtrations $(\X, \p)$, $(\Y,\q)$ of dimension $d$ with respective index sets $T$ and $U$. Suppose that they are $\epsilon$-interleaved. Then there exists an $\epsilon$-interleaving $(\phi, \psi)$ between their corresponding persistent cohomology modules which sends persistent Stiefel-Whitney classes on persistent Stiefel-Whitney classes.
In other words, for every $i \in \llbracket 1,d \rrbracket$, and for every $t \in (T + \epsilon) \cap U$ and $s \in U \cap (T + \epsilon)$, we have
\begin{align*}
&\phi^t( w_i^t(\p) ) = w_i^{t-\epsilon}(\q) \\
\text{ and ~~ } &\psi^s( w_i^s(\p) ) = w_i^{s-\epsilon}(\q).
\end{align*}
\end{theorem}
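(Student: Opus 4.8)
The plan is to produce the interleaving $(\phi,\psi)$ directly from the interleaving inclusions of the two underlying set filtrations, and then to obtain the compatibility with the persistent Stiefel-Whitney classes by pulling back, through the contravariant cohomology functor, the commuting triangles of Definition \ref{def:interleavings_bundles}. First I would fix the morphisms. Since $\X$ and $\Y$ are $\epsilon$-interleaved as set filtrations, for each admissible index we have inclusions $\iota^t \colon Y^{t-\epsilon} \hookrightarrow X^{t}$ (valid because $Y^{t-\epsilon} \subseteq X^{(t-\epsilon)+\epsilon}$) and $\kappa^s \colon X^{s-\epsilon} \hookrightarrow Y^{s}$ (valid because $X^{s-\epsilon} \subseteq Y^{(s-\epsilon)+\epsilon}$). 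Applying $H^*(-)$ gives $\phi^t = (\iota^t)^* \colon \cohomring{X^t} \rightarrow \cohomring{Y^{t-\epsilon}}$ and $\psi^s = (\kappa^s)^* \colon \cohomring{Y^s} \rightarrow \cohomring{X^{s-\epsilon}}$, which are the candidate $\epsilon$-morphisms $\V \rightarrow \W$ and $\W \rightarrow \V$.

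I would next check that $(\phi,\psi)$ is an $\epsilon$-interleaving of persistence modules. Every commuting square required of an $\epsilon$-morphism, and every triangle required of an $\epsilon$-interleaving, is the image under $H^*(-)$ of a diagram of inclusions of subsets of $E$; since all such inclusions commute (they are restrictions of the identity of $E$) and cohomology is functorial, each required diagram commutes automatically. This is precisely the general principle already recorded in the paper, namely that $\epsilon$-interleaved filtrations induce $\epsilon$-interleaved cohomology persistence modules, so no additional verification is needed at this step.

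The substance of the proof is the transport of the Stiefel-Whitney classes. Setting $s = t-\epsilon$ in the second triangle of Definition \ref{def:interleavings_bundles} gives the identity of maps $q^{t-\epsilon} = p^{t} \circ \iota^t$. Pulling back through cohomology yields $(q^{t-\epsilon})^* = (\iota^t)^* \circ (p^t)^* = \phi^t \circ (p^t)^*$, and evaluating at the Grassmannian class $w_i$ together with the definitions $w_i^t(\p) = (p^t)^*(w_i)$ and $w_i^{t-\epsilon}(\q) = (q^{t-\epsilon})^*(w_i)$ gives $\phi^t\big(w_i^t(\p)\big) = w_i^{t-\epsilon}(\q)$. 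The statement for $\psi$ is entirely symmetric: putting $t = s-\epsilon$ in the first triangle gives $p^{s-\epsilon} = q^{s} \circ \kappa^s$, hence $(p^{s-\epsilon})^* = \psi^s \circ (q^s)^*$, and evaluation at $w_i$ gives $\psi^s\big(w_i^s(\q)\big) = w_i^{s-\epsilon}(\p)$.

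I expect the main obstacle to be bookkeeping rather than mathematics: because cohomology reverses arrows and, in this paper's convention, the persistence-module maps already point backward, one must take the interleaving inclusions in the correct direction ($Y^{t-\epsilon} \hookrightarrow X^{t}$, not the reverse) so that the induced maps land in the intended spaces and shift the index down by $\epsilon$. Once the directions are pinned down, the two triangles of Definition \ref{def:interleavings_bundles} translate verbatim into the two desired identities on classes, and the interleaving property is a formal consequence of functoriality.
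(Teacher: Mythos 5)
Your proposal is correct and follows essentially the same route as the paper: take the interleaving inclusions of the underlying set filtrations, apply the cohomology functor to get $(\phi,\psi)$, and push the triangles of Definition \ref{def:interleavings_bundles} through functoriality to transport the classes. Your careful tracking of arrow directions is in fact slightly cleaner than the paper's write-up, and your final identity $\psi^s\big(w_i^s(\q)\big) = w_i^{s-\epsilon}(\p)$ is the correct reading of the (typo-afflicted) second displayed formula in the theorem statement.
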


\begin{proof}
Define $(\phi, \psi)$ to be the $\epsilon$-interleaving between the cohomology persistence modules $\V(\X)$ and $\V(\Y)$ given by the $\epsilon$-interleaving between the filtrations $\X$ and $\Y$.
Explicitly, if $i_t^{t+\epsilon}$ denotes the inclusion $X^t \hookrightarrow Y^{t+\epsilon}$ and $j_s^{s+\epsilon}$ denotes the inclusion $Y^s \hookrightarrow X^{s+\epsilon}$, then $\phi = (\phi^t)_{t \in (T+\epsilon) \cap U}$ is given by the induced maps in cohomology $\phi^t = (i_{t-\epsilon}^{t})^*$, and $\psi = (\psi^s)_{s \in (U+\epsilon) \cap T}$ is given by $\psi^s = (j_{s-\epsilon}^{s})^*$. 

Now, by fonctoriality, the diagrams of Definition \ref{def:interleavings_bundles} give rise to commutative diagrams in cohomology:
\begin{center}
\begin{minipage}{.49\linewidth}
\centering
\begin{tikzcd}[column sep=tiny]
H^*(X^{t-\epsilon}) &  & H^*(Y^{t}) \arrow[ll, "\phi^t", swap]  \\
& H^*(\Grass{d}{\R^\infty}) \arrow[ul, "(p^{t-\epsilon})^*"] \arrow[ur, "(q^{t})^*", swap] &
\end{tikzcd}
\end{minipage}
\begin{minipage}{.49\linewidth}
\centering
\begin{tikzcd}[column sep=tiny]
H^*(Y^{s-\epsilon}) &  & H^*(X^{s}) \arrow[ll, "\psi^s", swap]  \\
& H^*(\Grass{d}{\R^\infty}) \arrow[ul, "(q^{s-\epsilon})^*"] \arrow[ur, "(p^{s})^*", swap] &
\end{tikzcd}
\end{minipage}
\end{center}
Let $i \in \llbracket1,d\rrbracket$. By definition, the persistent Stiefel-Whitney classes $w_i(\p) = (w_i^t(\p))_{t \in T}$ and $w_i(\q) = (w_i^s(\q))_{s \in U}$ are equal to $w_i^t(\p) = (p^t)^*(w_i)$ and $w_i^s(\q) = (q^s)^*(w_i)$, where $w_i$ is the $i^\text{th}$ Stiefel-Whitney class of $\Grass{d}{\R^\infty}$.
The previous commutative diagrams then translates as $\phi^t( w_i^t(\p) ) = w_i^{t-\epsilon}(\q)$ and $\psi^s( w_i^s(\p) ) = w_i^{s-\epsilon}(\q)$, as wanted.
\end{proof}

Consider two vector bundle filtrations $(\X, \p)$, $(\Y,\q)$ such that there exists an $\epsilon$-interleaving $(\phi, \psi)$ between their persistent cohomology modules $\V(\X)$, $\V(\Y)$ which sends persistent Stiefel-Whitney classes on persistent Stiefel-Whitney classes.
Let $i \in \llbracket1,d\rrbracket$. Then the lifebars of their $i^\text{th}$ persistent Stiefel-Whitney classes $w_i(\p)$ and $w_i(\q)$ are $\epsilon$-close in the following sense: if we denote $\tdeatho(\p) = \inf \{t \in T, w_i^t(\p) \neq 0\}$ and $\tdeatho(\q) = \inf \{t \in T, w_i^t(\q) \neq 0\}$, then $|\tdeatho(\p) - \tdeatho(\q)| \leq \epsilon$.
This can be seen from their lifebar representations, as shown in Figure \ref{fig:6}.

\begin{figure}[H]
\centering
\includegraphics[width=.5\linewidth]{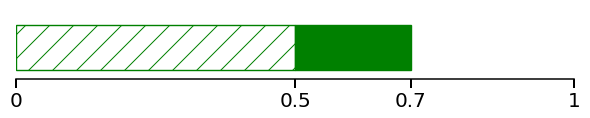}
\includegraphics[width=.5\linewidth]{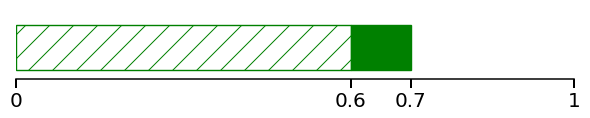}
\caption{Two $\epsilon$-close lifebars, with $\epsilon = 0{.}1$.}
\label{fig:6}
\end{figure}

Let us apply this result to the \v{C}ech bundle filtrations. Let $X$ and $Y$ be two subsets of $E = \R^n \times \matrixspace{\R^m}$. Suppose that the Hausdorff distance $\Hdist{X}{Y}$, with respect to the norm $\gammaN{\cdot}$, is lower than $\epsilon$, meaning that the $\epsilon$-thickenings $X^\epsilon$ and $Y^\epsilon$ satisfy $Y \subseteq X^\epsilon$ and $X \subseteq Y^\epsilon$. It is then clear that the vector bundle filtrations are $\epsilon$-interleaved, and we can apply Theorem \ref{thm:stability} to obtain the following result.

\begin{corollary}
\label{cor:stability}
If two subsets $X, Y \subset E$ satisfy $\Hdist{X}{Y} \leq \epsilon$, then there exists an $\epsilon$-interleaving between the persistent cohomology modules of their corresponding \v{C}ech bundle filtrations which sends persistent Stiefel-Whitney classes on persistent Stiefel-Whitney classes. 
\end{corollary}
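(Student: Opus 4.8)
The plan is to reduce the statement entirely to Theorem \ref{thm:stability} by checking that the Hausdorff hypothesis upgrades to an $\epsilon$-interleaving of \emph{vector bundle} filtrations in the sense of Definition \ref{def:interleavings_bundles}. Write $(\X,\p)$ and $(\Y,\q)$ for the \v{C}ech bundle filtrations associated to $X$ and $Y$, with index sets $T = [0,\tmaxgamma{X})$ and $U = [0,\tmaxgamma{Y})$.

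First I would establish the interleaving of the underlying set filtrations. The hypothesis $\Hdist{X}{Y} \leq \epsilon$, taken with respect to $\gammaN{\cdot}$, means exactly $Y \subseteq X^\epsilon$ and $X \subseteq Y^\epsilon$. Thickening by $t$ in the metric space $(E,\gammaN{\cdot})$ preserves inclusions and satisfies $(X^\epsilon)^t = X^{t+\epsilon}$, whence $X^t \subseteq Y^{t+\epsilon}$ and $Y^t \subseteq X^{t+\epsilon}$ for all admissible indices; that is, $\X$ and $\Y$ are $\epsilon$-interleaved as set filtrations, the interleaving maps being the ambient set inclusions.

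The crucial step is to verify the two commuting triangles of Definition \ref{def:interleavings_bundles}, and here I would exploit the fact that both families of bundle maps are restrictions of a single globally defined map. Indeed, by Equation \eqref{eq:def_cech_bundle_proj}, each $p^t$ (resp. $q^t$) is the restriction to $X^t$ (resp. $Y^t$) of the map $\Pi \colon (x,A) \mapsto \proj{A}{\Grass{d}{\R^m}}$ defined on $\R^n \times (\matrixspace{\R^m} \setminus \med{\Grass{d}{\R^m}})$. Fix $t \in T \cap (U-\epsilon)$, so that both $X^t$ and $Y^{t+\epsilon}$ avoid $\R^n \times \med{\Grass{d}{\R^m}}$ and $\Pi$ is defined on each. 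For any point $z \in X^t \subseteq Y^{t+\epsilon}$ one has $q^{t+\epsilon}(z) = \Pi(z) = p^t(z)$, since the inclusion $X^t \hookrightarrow Y^{t+\epsilon}$ is the identity on points; thus $q^{t+\epsilon} \circ i_t^{t+\epsilon} = p^t$, which is precisely the left triangle. The right triangle follows symmetrically for $s \in U \cap (T-\epsilon)$. Hence $(\X,\p)$ and $(\Y,\q)$ are $\epsilon$-interleaved vector bundle filtrations.

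Finally I would invoke Theorem \ref{thm:stability} directly: the $\epsilon$-interleaving of vector bundle filtrations yields an $\epsilon$-interleaving $(\phi,\psi)$ of the associated persistent cohomology modules sending persistent Stiefel-Whitney classes to persistent Stiefel-Whitney classes, which is exactly the asserted conclusion. The only genuine care needed is bookkeeping on the index sets — ensuring that whenever a diagram is required the relevant thickening still lies in the region where the projection onto $\Grass{d}{\R^m}$ is single-valued — but this is guaranteed by restricting to the overlaps $T \cap (U-\epsilon)$ and $U \cap (T-\epsilon)$ already built into Definition \ref{def:interleavings_bundles}, so I expect no substantive obstacle beyond this. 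The essential simplification, and the reason the commutativity is automatic rather than requiring estimation, is that $\p$ and $\q$ are two restrictions of the \emph{same} map $\Pi$.
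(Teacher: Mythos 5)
Your proposal is correct and follows the same route as the paper: the paper's proof simply observes that the Hausdorff bound makes the set filtrations $\epsilon$-interleaved, declares it "clear" that the bundle filtrations are then $\epsilon$-interleaved, and invokes Theorem \ref{thm:stability}. Your verification that the triangles of Definition \ref{def:interleavings_bundles} commute because $\p$ and $\q$ are both restrictions of the single projection map $\Pi$ is exactly the reason the paper's "clear" is justified, so you have merely filled in the detail the paper omits.
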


\begin{example}
In order to illustrate Corollary \ref{cor:stability}, consider the sets $X'$ and $Y'$ represented in Figure \ref{fig:7}. They are noisy samples of the sets $X$ and $Y$ defined in Example \ref{ex:normal_mobius}.
They contain 50 points each.

Figure \ref{fig:8} represents the barcodes of the \v{C}ech filtrations of the sets $X'$ and $Y'$, together with the lifebar of the first persistent Stiefel-Whitney class of their corresponding \v{C}ech bundle filtrations.
We see that they are close to the original descriptors of $X$ and $Y$ (Figure \ref{fig:5}).
Experimentally, we computed that the Hausdorff distances between $X,X'$ and $Y,Y'$ are approximately $\Hdist{X}{X'} \approx 0{.}5$ and $\Hdist{Y}{Y'} \approx 0{.}4$.
We observe that this is coherent with the lifebar of $w_1(Y')$, which is $\epsilon$-close to the lifebar of $w_1(Y)$ with $\epsilon \approx 0{.}3 \leq 0{.}4$.

\begin{figure}[H]
\centering
\begin{minipage}{.49\linewidth}
\centering
\includegraphics[width=.6\linewidth]{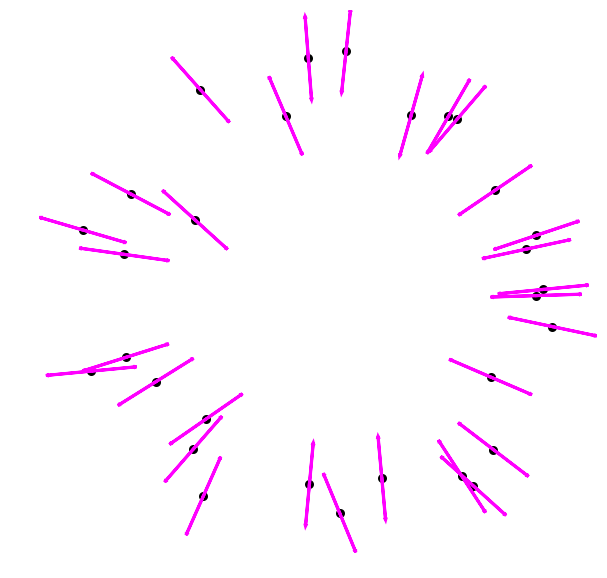}
\end{minipage}
\begin{minipage}{.49\linewidth}
\centering
\includegraphics[width=.6\linewidth]{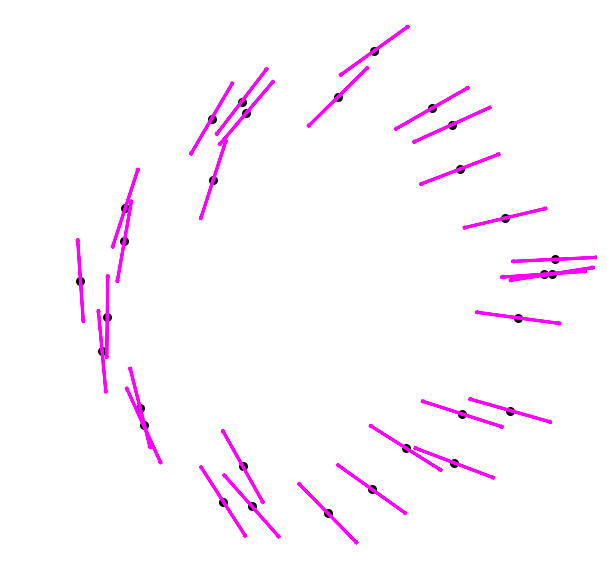}
\end{minipage}
\caption{Representation of the sets $X', Y' \subset \R^2 \times \matrixspace{\R^2}$. The black points correspond to the $\R^2$-coordinate, and the pink segments over them correspond to the orientation of the $\matrixspace{\R^2}$-coordinate.}
\label{fig:7}
\end{figure}

\begin{figure}[H]
\centering
\begin{minipage}{.49\linewidth}
\centering
\includegraphics[width=.9\linewidth]{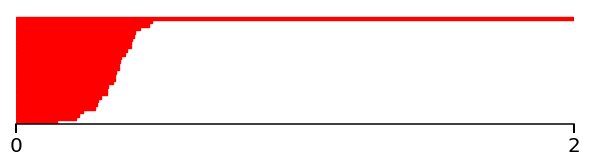}
\includegraphics[width=.9\linewidth]{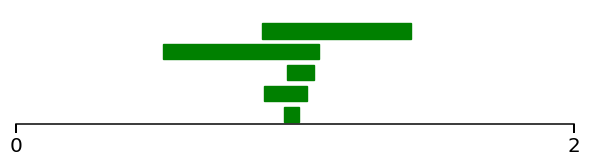}
\includegraphics[width=.9\linewidth]{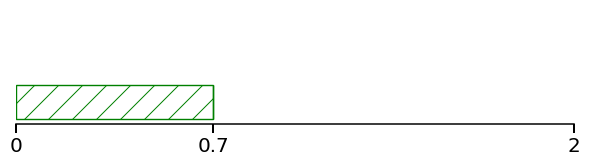}
\end{minipage}
\begin{minipage}{.49\linewidth}
\centering
\includegraphics[width=.9\linewidth]{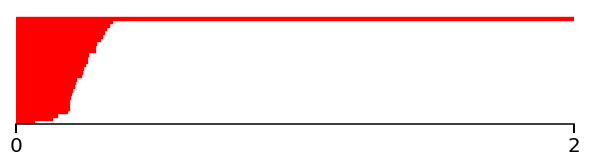}
\includegraphics[width=.9\linewidth]{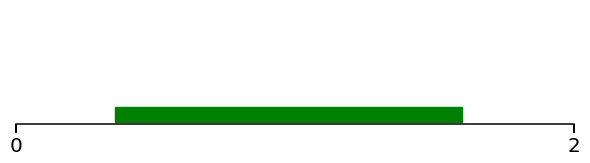}
\includegraphics[width=.9\linewidth]{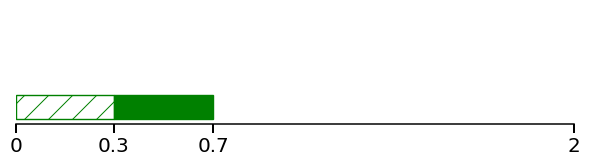}
\end{minipage}
\caption{Left: $H^0$ and $H^1$ barcodes of $X'$ and lifebar of $w_1(X')$. Right: same for $Y'$. Only bars of length larger than $0.05$ are represented.}
\label{fig:8}
\end{figure}
\end{example}

\subsection{Consistency}
\label{subsec:consistency}

In this subsection we describe a setting where the persistent Stiefel-Whitney classes $w_i(X)$ of the \v{C}ech bundle filtration of a set $X$ can be seen as consistent estimators of the Stiefel-Whitney classes of some underlying vector bundle.

Let $\MMo$ be a compact $\CC^3$-manifold, and $u \colon \MMo \rightarrow \MM\subset \R^n$ an immersion. Suppose that $\MMo$ is given a $d$-dimensional vector bundle structure $p \colon \MMo \rightarrow \Grass{d}{\R^m}$. 
Let $E = \R^n \times \matrixspace{\R^m}$, and consider the set 
\begin{equation}
\label{eq:lifted_set}
\MMcheck = \left\{ \left(\imm(x_0), \projmatrix{p(x_0)}\right), x_0 \in \MMo \right\} \subset E,
\end{equation}
where $\projmatrix{p(x_0)}$ denotes the orthogonal projection matrix onto the subspace $p(x_0) \subset \R^m$. The set $\MMcheck$ is called the \emph{lift} of $\MMo$.
Consider the \emph{lifting map} defined as
\begin{align}
\label{eq:lifting_map}
\begin{split}
\immcheck\colon \MMo & \longrightarrow \MMcheck \subset E \\
x_0 & \longmapsto \left(\imm(x_0), \projmatrix{p(x_0)}\right).
\end{split}
\end{align}
We make the following assumption: $\immcheck$ is an embedding. As a consequence, $\MMcheck$ is a submanifold of $E$, and $\MMo$ and $\MMcheck$ are diffeomorphic. 

The persistent cohomology of $\MMcheck$ can be used to recover the cohomology of $\MMo$.
To see this, select $\gamma>0$, and denote by $\reach{\MMcheck}$ the reach of $\MM$, where $E$ is endowed with the norm $\gammaN{\cdot}$.
Since $\MMcheck$ is a $\CC^2$-submanifold, $\reach{\MMcheck}$ is positive. Note that it depends on $\gamma$.
Let $\CechF{\MMcheck}= (\MMcheck^t)_{t\geq 0}$ be the \v{C}ech set filtration of $\MMcheck$ in the ambient space $(E, \gammaN{\cdot})$, and let $\V(\MMcheck)$ be the corresponding persistent cohomology module. 
For every $s, t \in [0, \reach{\MMcheck})$ such that $s \leq t$, we know that the inclusion maps $i_s^t\colon \MMcheck^s \hookrightarrow \MMcheck^t$ are homotopy equivalences.
Hence the persistence module $\V(\MMcheck)$ is constant on the interval $[0, \reach{\MMcheck})$, and is equal to the cohomology $H^*(\MMcheck) = H^*(\MMo)$.

Consider the \v{C}ech bundle filtration $(\CechF{\MMcheck}, \p)$ of $\MMcheck$. The following theorem shows that the persistent Stiefel-Whitney classes $w_i^t(\MMcheck)$ are also equal to the usual Stiefel-Whitney classes of the vector bundle $(\MMo, p)$.

\begin{theorem}
\label{thm:consistency}
Let $\MMo$ be a compact $\CC^3$-manifold, $u \colon \MMo \rightarrow \R^n$ an immersion and $p \colon \MMo \rightarrow \Grass{d}{\R^m}$ a continuous map.
Let $\MMcheck$ be the lift of $\MMo$ (Equation (\ref{eq:lifted_set})) and $\immcheck$ the lifting map (Equation (\ref{eq:lifting_map})).
Suppose that $\imm$ is an embedding.

Let $\gamma > 0$ and consider the \v{C}ech bundle filtration $(\CechF{\MMcheck}, \p)$ of $\MMcheck$. 
Its maximal filtration value is $\tmaxgamma{\MMcheck} = \frac{\sqrt{2}}{2} \gamma$.
Denote by $\pSF{i}{\mathbbm{p}} = (\pSFt{i}{\mathbbm{p}}{t})_{t \in T}$ its persistent Stiefel-Whitney classes, $i \in \llbracket1,d\rrbracket$.
Denote also by $i_0^t$ the inclusion $\MMcheck \rightarrow \MMcheck^t$, for $t \in [0, \reach{\MMcheck})$.

Let $t\geq0$ be such that $t < \min\left(\reach{\MMcheck}, \tmaxgamma{\MMcheck}\right)$.
Then the map $i_0^t \circ \immcheck\colon \MMo \rightarrow \MMcheck^t$ induces an isomorphism $H^*(\MMo) \leftarrow H^*(\MMcheck^t)$ which maps the $i^\text{th}$ persistent Stiefel-Whitney class $\pSFt{i}{\p}{t}$ of $(\CechF{\MMcheck}, \p)$ to the $i^\text{th}$ Stiefel-Whitney class of $(\MMo, p)$.
\end{theorem}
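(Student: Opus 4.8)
The plan is to reduce the persistent statement to the classical functoriality of Stiefel-Whitney classes (Axiom 2 in Subsect.~\ref{background:SFclasses}) by exhibiting the right commutative triangle of classifying maps. The key observation is that the persistent class $\pSFt{i}{\p}{t} = (p^t)^*(w_i)$ is defined via the projection map $p^t \colon \MMcheck^t \rightarrow \Grass{d}{\R^m}$ of Equation~\eqref{eq:def_cech_bundle_proj}, while the classical class $w_i(\MMo, p)$ is $p^*(w_i)$ with $p \colon \MMo \rightarrow \Grass{d}{\R^m}$ the given classifying map. So the whole theorem amounts to showing that the composite
\[
\MMo \xrightarrow{~\immcheck~} \MMcheck \xhookrightarrow{~i_0^t~} \MMcheck^t \xrightarrow{~p^t~} \Grass{d}{\R^m}
\]
is (homotopic to) $p$, and that $(i_0^t \circ \immcheck)^*$ is an isomorphism on cohomology.

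First I would handle the isomorphism claim. Since $t < \reach{\MMcheck}$, the inclusion $i_0^t \colon \MMcheck \hookrightarrow \MMcheck^t$ is a homotopy equivalence (this is exactly the fact, already invoked before the theorem statement, that the \v{C}ech filtration of a set with positive reach is constant up to its reach), so $(i_0^t)^*$ is an isomorphism $H^*(\MMcheck) \leftarrow H^*(\MMcheck^t)$. And $\immcheck \colon \MMo \rightarrow \MMcheck$ is a diffeomorphism by the standing hypothesis that $\immcheck$ is an embedding (stated just after Equation~\eqref{eq:lifting_map}), so $\immcheck^*$ is an isomorphism $H^*(\MMo) \leftarrow H^*(\MMcheck)$. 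Composing, $(i_0^t \circ \immcheck)^*$ is an isomorphism, which gives the first assertion. I would also record here that $\tmaxgamma{\MMcheck} = \frac{\sqrt 2}{2}\gamma$ follows immediately from Equation~\eqref{eq:tmax_subset_grass}, because $\MMcheck$ lies inside $\R^n \times \Grass{d}{\R^m}$ by construction.

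Second, and this is the crux, I would prove that $p^t \circ i_0^t \circ \immcheck = p$ on the nose. Unwinding the definitions, for $x_0 \in \MMo$ the lifting map sends $x_0 \mapsto (\imm(x_0), \projmatrix{p(x_0)})$; the projection $\mathrm{proj}_2$ discards the first coordinate to give the matrix $\projmatrix{p(x_0)}$; and this matrix already lies on $\Grass{d}{\R^m} \subset \matrixspace{\R^m}$ (it \emph{is} the orthogonal projection matrix of the subspace $p(x_0)$), so applying $\proj{\cdot}{\Grass{d}{\R^m}}$ leaves it fixed and returns the plane $p(x_0)$. Hence the composite is literally $x_0 \mapsto p(x_0)$. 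The only subtlety is that $p^t$ is only well-defined where the projection onto $\Grass{d}{\R^m}$ is single-valued, i.e.\ away from $\R^n \times \med{\Grass{d}{\R^m}}$; but this is guaranteed precisely by the hypothesis $t < \tmaxgamma{\MMcheck}$, which by Definition~\ref{def:filtered_cech_bundle} and Equation~\eqref{eq:tmax} ensures $\MMcheck^t$ avoids the medial axis, so $p^t$ and the whole composite are defined on all of $\MMcheck^t \supseteq \immcheck(\MMo)$.

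Finally I would assemble the pieces. Applying the cohomology functor to the equality $p^t \circ i_0^t \circ \immcheck = p$ gives $(i_0^t \circ \immcheck)^* \circ (p^t)^* = p^*$, i.e.\ $(i_0^t \circ \immcheck)^*$ carries $\pSFt{i}{\p}{t} = (p^t)^*(w_i)$ to $p^*(w_i) = w_i(\MMo, p)$, which is exactly the desired conclusion. The main obstacle is really just the bookkeeping in the middle step — making sure the embedding hypothesis, the reach bound, and the $\tmaxgamma{\MMcheck}$ bound are each used where needed so that every map in the composite is defined and the diagram genuinely commutes at the level of spaces rather than only up to homotopy; once the pointwise identity $p^t \circ i_0^t \circ \immcheck = p$ is in hand, functoriality does all the remaining work and no cohomological computation is required.
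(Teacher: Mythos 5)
Your proposal is correct and follows essentially the same route as the paper: the paper's proof is exactly the commutative triangle $p^t \circ i_0^t \circ \immcheck = p$, with $(i_0^t)^*$ an isomorphism because $t < \reach{\MMcheck}$ and $\immcheck^*$ an isomorphism because $\immcheck$ is an embedding, followed by functoriality. The only difference is that you explicitly unwind the definitions of $\immcheck$, $\mathrm{proj}_2$ and $\proj{\cdot}{\Grass{d}{\R^m}}$ to verify the triangle commutes on the nose, which the paper leaves implicit.
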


\begin{proof}
Consider the following commutative diagram, defined for every $t < \tmaxgamma{\MMcheck}$:
\begin{equation*}
\begin{tikzcd}
\MMo \arrow[r, "\immcheck"]  \arrow[dr, "p", swap] & \MMcheck \arrow[r, hook, "i_0^t"] & \MMcheck^t \arrow[dl, "p^t" ] \\
& \Grass{d}{\R^m} & &
\end{tikzcd}
\end{equation*}
We obtain a commutative diagram in cohomology:
\begin{equation*}
\begin{tikzcd}
H^*(\MMo)  & H^*(\MMcheck) \arrow[l, "\immcheck^*", swap]  & H^*(\MMcheck^t) \arrow[l, "(i_0^t)^*", swap] \\ 
& H^*(\Grass{d}{\R^m}) \arrow[ur, "(p^t)^*", swap] \arrow[ul, "p^*"] & &
\end{tikzcd}
\end{equation*}
Since $t < \reach{\MMcheck}$, the map $(i_0^t)^*$ is an isomorphism. So is $\immcheck^*$ since $\immcheck$ is an embedding.
As a consequence, the map $i_0^t\circ \immcheck$ induces an isomorphism $H^*(\MMo) \simeq H^*(\MMcheck^t)$.

Let $w_i$ denotes the $i^\text{th}$ Stiefel-Whitney class of $\Grass{d}{\R^m}$.
By definition, the $i^\text{th}$ Stiefel-Whitney class of $(\MMo, p)$ is $p^*(w_i)$,
and the $i^\text{th}$ persistent Stiefel-Whitney class of $(\CechF{\MMcheck}, \p)$ is $\pSFt{i}{\p}{t} = (p^t)^*(w_i)$.
By commutativity of the diagram, we obtain $p^*(w_i) = (p^t)^*(w_i)$ under the identification $H^*(\MMo) \simeq H^*(\MMcheck^t)$.
\end{proof}

We deduce an estimation result.

\begin{corollary}
\label{cor:consistency_stability}
Let $X \subset E$ be any subset such that $\Hdist{X}{\MMcheck} \leq \frac{1}{17}\reach{\MMcheck}$. Define $\epsilon = \Hdist{X}{\MMcheck}$.
Then for every 
$t\in \left[4\epsilon, \reach{\MMcheck}-3\epsilon\right)$, 
the composition of inclusions $\MMo \hookrightarrow \MMcheck \hookrightarrow X^t$ induces an isomorphism $H^*(\MMo) \leftarrow H^*(X^t)$ which sends the $i^\text{th}$ persistent Stiefel-Whitney class $\pSFt{i}{X}{t}$ of the \v{C}ech bundle filtration of $X$ to the $i^\text{th}$ Stiefel-Whitney class of $(\MMo, p)$.
\end{corollary}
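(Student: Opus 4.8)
The plan is to transport the conclusion of Theorem~\ref{thm:consistency}, which concerns the exact lift $\MMcheck$, onto the offsets $X^t$ along the inclusion $\MMcheck \hookrightarrow X^t$, once this inclusion is shown to induce an isomorphism in cohomology. Write $\epsilon = \Hdist{X}{\MMcheck}$. The Hausdorff bound gives the offset interleaving $X^s \subseteq \MMcheck^{s+\epsilon}$ and $\MMcheck^s \subseteq X^{s+\epsilon}$ for all $s \geq 0$; in particular $\MMcheck \subseteq X^{\epsilon} \subseteq X^t$ as soon as $t \ge \epsilon$, so the inclusion $\MMcheck \hookrightarrow X^t$ is defined on the whole range, and $X^t \subseteq \MMcheck^{t+\epsilon}$.

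The core step is to prove that, for $t \in [4\epsilon, \reach{\MMcheck} - 3\epsilon)$ and under the hypothesis $\epsilon \le \frac{1}{17}\reach{\MMcheck}$, the inclusion $\MMcheck \hookrightarrow X^t$ induces an isomorphism $H^*(\MMcheck) \leftarrow H^*(X^t)$. I would obtain this by exhibiting a deformation retraction of $X^t$ onto $\MMcheck$. Since $t + \epsilon < \reach{\MMcheck}$, we have $X^t \subseteq \MMcheck^{t+\epsilon}$ inside the reach tube, so the nearest-point projection $\projj{\cdot}{\MMcheck}$ is well-defined and continuous on $X^t$, and the straight-line homotopy $(x,\sigma) \mapsto (1-\sigma)x + \sigma\projj{x}{\MMcheck}$ is a candidate retraction. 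The only nontrivial point is that these segments remain inside $X^t$: for $x \in X^t$ one chooses $x' \in X$ with $\eucN{x - x'} \le t$ and uses $\dist{x}{\MMcheck} \le t + \epsilon$ together with the reach of $\MMcheck$ to bound the distance from each point of the segment to $X$. I expect this to be the main obstacle, since it is not a formal consequence of the interleaving but requires Niyogi--Smale--Weinberger-type distance estimates: the lower bound $t \ge 4\epsilon$ provides enough thickening to swallow the retraction segments, the upper bound $t < \reach{\MMcheck} - 3\epsilon$ keeps $X^t$ strictly inside the reach tube, and the hypothesis $\epsilon \le \reach{\MMcheck}/17$ (which forces $7\epsilon < \reach{\MMcheck}$, hence a nonempty interval) supplies the slack these estimates need. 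A pure interleaving argument cannot replace this: an $\epsilon$-interleaving with the constant module $\V(\MMcheck)$ only controls the barcode of $\V(X)$ up to $\epsilon$ and its image persistence, whereas the statement asserts that $H^*(X^t)$ \emph{itself} is carried isomorphically, so genuine geometric input is unavoidable.

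Granting the reconstruction isomorphism, the identification of the classes is functorial and follows the proof of Theorem~\ref{thm:consistency} almost verbatim. Because $\immcheck$ is an embedding, the composite $\MMo \xrightarrow{\immcheck} \MMcheck \hookrightarrow X^t$ induces an isomorphism $\rho \colon H^*(\MMo) \leftarrow H^*(X^t)$. Moreover the projection map $p^t$ of the \v{C}ech bundle filtration of $X$ restricts on $\MMcheck$ to the original classifying data: for $x_0 \in \MMo$ one has $p^t(\immcheck(x_0)) = \proj{\projmatrix{p(x_0)}}{\Grass{d}{\R^m}} = \projmatrix{p(x_0)}$, that is, $p^t \circ (\MMcheck \hookrightarrow X^t) \circ \immcheck = p$. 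Hence the triangle with apex $\Grass{d}{\R^m}$ commutes, and applying $H^*$ and pulling back the generator $w_i$ gives $\rho(\pSFt{i}{X}{t}) = \rho\big((p^t)^*(w_i)\big) = p^*(w_i)$, which is exactly the $i^\text{th}$ Stiefel--Whitney class of $(\MMo, p)$.

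Two remarks complete the plan. First, for $\pSFt{i}{X}{t}$ to be defined one needs $t < \tmaxgamma{X}$, i.e. $X^t$ must avoid $\R^n \times \med{\Grass{d}{\R^m}}$; since $X$ is $\epsilon$-close to $\MMcheck \subset \R^n \times \Grass{d}{\R^m}$, for which $\tmaxgamma{\MMcheck} = \frac{\sqrt{2}}{2}\gamma$, this holds on the relevant range for an appropriate choice of $\gamma$, and one should record this compatibility. Second, Corollary~\ref{cor:stability} gives the complementary statement at the level of persistence: the \v{C}ech bundle filtrations of $X$ and $\MMcheck$ are $\epsilon$-interleaved by maps respecting persistent Stiefel--Whitney classes, so composing this interleaving with Theorem~\ref{thm:consistency} reproves the class identification and shows that the lifebar of $\pSF{i}{X}$ is $\epsilon$-close to that of $\MMcheck$; by functoriality this composite agrees with $\rho$ on $\pSFt{i}{X}{t}$.
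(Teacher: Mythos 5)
Your overall architecture is the right one, and the second half of your argument (the functorial transport of the class) is correct: the triangle $p^t\circ(\MMcheck\hookrightarrow X^t)\circ\immcheck=p$ commutes because points of $\MMcheck$ already lie over $\Grass{d}{\R^m}$, so once $H^*(\MMo)\leftarrow H^*(X^t)$ is known to be an isomorphism the identification $\rho(\pSFt{i}{X}{t})=p^*(w_i)$ follows exactly as in Theorem \ref{thm:consistency}. Your observation that one must also check $t<\tmaxgamma{X}$ for $\pSFt{i}{X}{t}$ to be defined is a legitimate point that the paper itself does not address.

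The gap is in your treatment of the cohomological isomorphism, which is precisely the step the paper does \emph{not} prove from scratch: its proof is a one-line citation of Proposition 3.1 of \cite{chazal2009sampling}, and the constants $\frac{1}{17}$, $4\epsilon$ and $\reach{\MMcheck}-3\epsilon$ in the statement are lifted verbatim from that proposition (the $\frac{1}{17}$ is the value of $\mu^2/(5\mu^2+12)$ at $\mu=1$). Your proposed substitute --- the straight-line homotopy $(x,\sigma)\mapsto(1-\sigma)x+\sigma\,\projj{x}{\MMcheck}$ --- does not close with the ingredients you list. For $y_\sigma$ on the segment one has $\dist{y_\sigma}{X}\leq t+\sigma\,\dist{x}{\MMcheck}$ from the endpoint $x$, and $\dist{y_\sigma}{X}\leq(1-\sigma)\dist{x}{\MMcheck}+\epsilon$ from the endpoint $\projj{x}{\MMcheck}$; with $\dist{x}{\MMcheck}\leq t+\epsilon$ the best combination of these two bounds is $t+\epsilon$, attained near $\sigma(t+\epsilon)=\epsilon$, so the segment is only guaranteed to stay in $X^{t+\epsilon}$, not in $X^t$. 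Making the retraction work requires the genuinely second-order Niyogi--Smale--Weinberger estimates (star-shapedness of the fibers $\pi^{-1}(p)\cap X^t$, using that $\MMcheck$ has reach $R$ so that nearby sample points sit almost tangentially), and those arguments come with their own constants, which there is no reason to expect to reproduce the interval $\left[4\epsilon,\reach{\MMcheck}-3\epsilon\right)$. In short: either carry out the NSW-type fiber analysis in full (accepting possibly different constants), or do what the paper does and invoke the Chazal--Oudot reconstruction result, whose hypotheses are exactly the ones in the statement. As written, the central geometric claim is asserted but not established.
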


\begin{proof}
This is a consequence of Theorems \ref{thm:stability} and \ref{thm:consistency} previously stated, and Proposition 3.1 of \cite{chazal2009sampling}.
\end{proof}

As a consequence of this corollary, on the set $[4\epsilon, \reach{\MMcheck}-3\epsilon)$, the $i^\text{th}$ persistent Stiefel-Whitney class of the \v{C}ech bundle filtration of $X$ is zero if and only if the $i^\text{th}$ Stiefel-Whitney class of $(\MMo, p)$ is.

\begin{example}
In order to illustrate Corollary \ref{cor:consistency_stability}, consider the torus and the Klein bottle, immersed in $\R^3$ as in Figure \ref{fig:9}.

\begin{figure}[H]
\centering
\begin{minipage}{.49\linewidth}
\centering
\includegraphics[width=.5\linewidth]{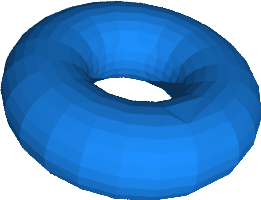}
\end{minipage}
\begin{minipage}{.49\linewidth}
\centering
\includegraphics[width=.5\linewidth]{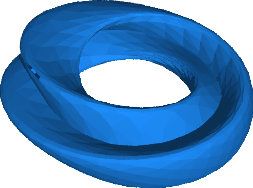}
\end{minipage}
\caption{Immersion of the torus and the Klein bottle in $\R^3$.}
\label{fig:9}
\end{figure}
\noindent
Let them be endowed with their normal bundles. They can be seen as submanifolds $\check\MM, \check\MM'$ of $\R^3 \times \matrixspace{\R^3}$.
We consider two samples $X, X'$ of $\check\MM, \check\MM'$, represented in Figure \ref{fig:10}. 
They contain respectively 346 and 1489 points.
We computed experimentally the Hausdorff distances $\Hdist{X}{\check\MM} \approx 0{.}6$ and $\Hdist{X'}{\check\MM'} \approx 0{.}45$, with respect to the norm $\gammaN{\cdot}$ where $\gamma=1$.

\begin{figure}[H]
\centering
\begin{minipage}{.49\linewidth}
\centering
\includegraphics[width=.6\linewidth]{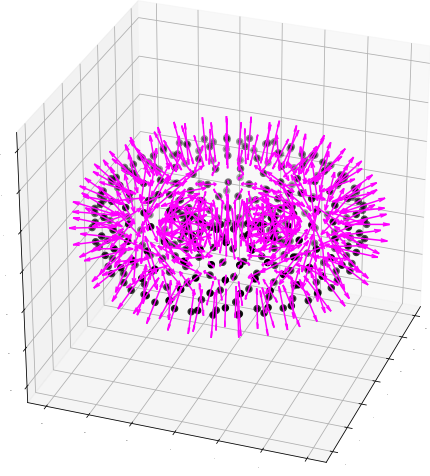}
\end{minipage}
\begin{minipage}{.49\linewidth}
\centering
\includegraphics[width=.6\linewidth]{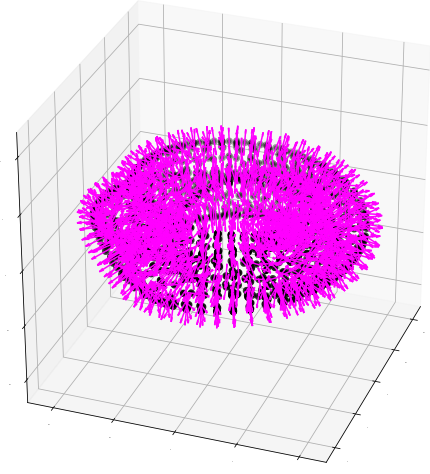}
\end{minipage}
\caption{Samples $X$ and $X'$ of $\check\MM$ and $\check\MM'$. The black points corresponds to the $\R^3$-coordinate, and the pink arrows over them correspond to the orientation of the $\matrixspace{\R^3}$-coordinate.}
\label{fig:10}
\end{figure}

\noindent
Figure \ref{fig:11} represents the barcodes of the persistent cohomology of $X$ and $X'$, and the lifebars of their first persistent Stiefel-Whitney classes $w_1(X)$ and $w_1(X')$.
Observe that $w_1(X)$ is always zero, while $w_1(X')$ is nonzero for $t\geq 0{.}3$.
This is an indication that $\check\MM$, the underlying manifold of $X$, is orientable, while $\check\MM'$ is not.
Lemma \ref{lem:orientability}, stated below, justifies this assertion.
Therefore, one interprets these lifebars as follows: $X$ is sampled on an orientable manifold, while $X'$ is sampled on a non-orientable one.

\begin{figure}[H]
\centering
\begin{minipage}{.49\linewidth}
\centering
\includegraphics[width=.9\linewidth]{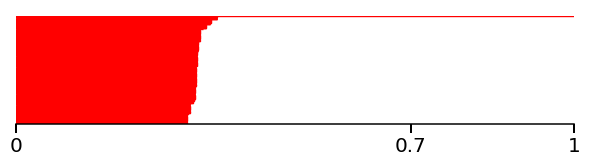}
\includegraphics[width=.9\linewidth]{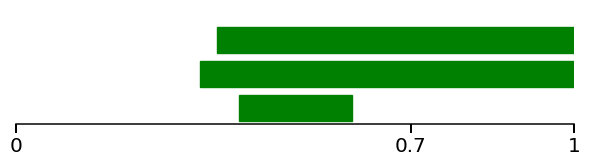}
\includegraphics[width=.9\linewidth]{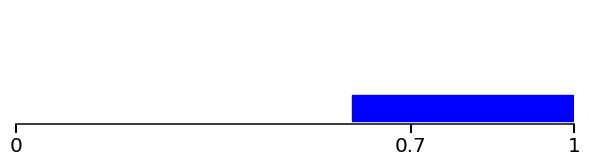}
\includegraphics[width=.9\linewidth]{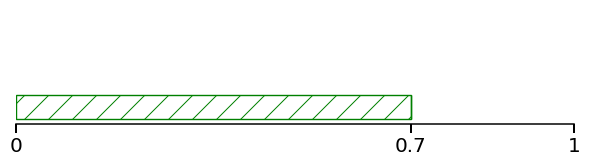}
\end{minipage}
\begin{minipage}{.49\linewidth}
\centering
\includegraphics[width=.9\linewidth]{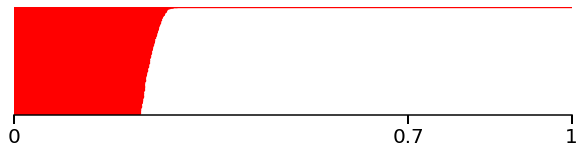}
\includegraphics[width=.9\linewidth]{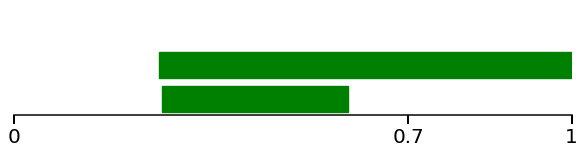}
\includegraphics[width=.9\linewidth]{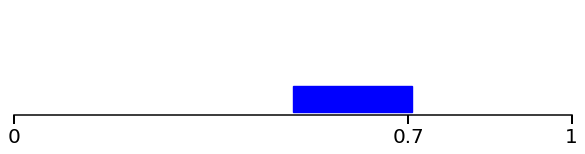}
\includegraphics[width=.9\linewidth]{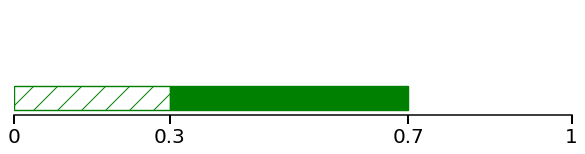}
\end{minipage}
\caption{\textbf{Left:} $H^0$, $H^1$ and $H^2$ barcodes of $X$ and lifebar of $w_1(X)$. \textbf{Right:} same for $X'$. 
Only bars of length larger than $0.2$ are represented.
}
\label{fig:11}
\end{figure}
\end{example}
\begin{lemma}
Let $\MMo \rightarrow \MM$ be an immersion of a manifold $\MMo$ in a Euclidean space.
Then $\MMo$ is orientable if and only if the first Stiefel-Whitney class of its normal bundle is zero.
\label{lem:orientability}
\end{lemma}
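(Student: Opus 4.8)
The plan is to reduce the statement to the standard criterion that the tangent bundle detects orientability, and then to transfer this information from the tangent bundle to the normal bundle by means of the Whitney sum formula (Axiom~3). The essential geometric input is that an immersion into a Euclidean space makes the Whitney sum of the tangent and normal bundles trivial.

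First I would fix notation: write $\tau$ for the tangent bundle of $\MMo$, and $\nu$ for its normal bundle relative to the immersion $u \colon \MMo \to \R^n$. Recall from Subsect.~\ref{background:SFclasses} that $\MMo$ is orientable if and only if $w_1(\tau) = 0$. Hence it suffices to prove the equality $w_1(\tau) = w_1(\nu)$ in $H^1(\MMo, \Zd)$.

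Next I would exploit the immersion. Since the differential $\dd u$ is fiberwise injective, it realizes $\tau$ as a subbundle of the pullback $u^* T\R^n \cong \MMo \times \R^n$, the trivial bundle of rank $n$. Endowing $\R^n$ with its standard inner product, $\nu$ is by definition the orthogonal complement of $\tau$ inside $\MMo \times \R^n$, so that $\tau \oplus \nu$ is isomorphic to the trivial bundle $\MMo \times \R^n$. On the one hand, a trivial bundle admits a constant (hence null-homotopic) classifying map, so by Axiom~2 all its Stiefel-Whitney classes of positive degree vanish; in particular $w_1(\tau \oplus \nu) = 0$. On the other hand, Axiom~3 in degree $1$, together with $w_0 \equiv 1$, gives
\[
w_1(\tau \oplus \nu) = w_1(\tau) \cupp w_0(\nu) + w_0(\tau) \cupp w_1(\nu) = w_1(\tau) + w_1(\nu).
\]
Working over $\Zd$ (where $-1 = 1$), these two facts yield $w_1(\tau) = w_1(\nu)$, which combined with the orientability criterion completes the argument.

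The steps are all elementary, and I do not expect any serious obstacle; the only point deserving care is the identification $\tau \oplus \nu \cong \MMo \times \R^n$. This is exactly where the immersion hypothesis (injectivity of $\dd u$) and the choice of a metric on $\R^n$ enter, and it is what licenses the application of the Whitney sum formula. Once this is in place, the cohomological computation over $\Zd$ is immediate.
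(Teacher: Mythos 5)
Your proof is correct and follows essentially the same route as the paper: reduce orientability to the vanishing of $w_1(\tau)$, use the triviality of $\tau \oplus \nu$ together with Axioms 1 and 3 to conclude $w_1(\tau) = w_1(\nu)$ over $\Zd$. The only difference is that you spell out why $\tau \oplus \nu$ is trivial (via $\dd u$ embedding $\tau$ into $u^* T\R^n$ and taking orthogonal complements), a point the paper asserts without justification.
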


\begin{proof}
Let $\tau$ and $\nu$ denote the tangent and normal bundles of $\MMo$.
The Whitney sum $\tau \oplus \nu$ is a trivial bundle, hence its first Stiefel-Whitney class is $w_1(\tau \oplus \nu) = 0$. 
Using Axioms 1 and 3 of the Stiefel-Whitney classes, we obtain
\begin{align*}
w_1(\tau \oplus \nu) 
&= w_1(\tau)\cupp w_0(\nu) + w_0(\tau) \cupp w_1(\nu) \\
&= w_1(\tau) \cupp 1 + 1 \cupp w_1(\nu) \\
&= w_1(\tau) + w_1(\nu).
\end{align*}
Therefore, $w_1(\tau) = w_1(\nu)$, hence $w_1(\tau)$ is zero if and only if $w_1(\nu)$ is zero. 
Besides, it is known that the first Stiefel-Whitney class of the tangent bundle of a manifold is zero if and only if the manifold is orientable. We deduce the result.
\end{proof}

\section{Computation of persistent Stiefel-Whitney classes}
\label{sec:computation}
In order to build an effective algorithm to compute the persistent Stiefel-Whitney classes, we have to find an equivalent formulation in terms of simplicial cohomology.
We will make use of the well-known technique of simplicial approximation, as described in Subsect. \ref{background:simplicial_approx}.

\subsection{Simplicial approximation to \v{C}ech bundle filtrations}
\label{subsec:simplicial_approx_cech}

Let $X$ be a subset of $E = \R^n \times \matrixspace{\R^m}$.
Let us recall Definition \ref{def:filtered_cech_bundle}: the \v{C}ech bundle filtration associated to $X$ is the vector bundle filtration $(\X, \p)$ whose underlying filtration is the \v{C}ech filtration $\X = (X^t)_{t \in T}$, with $T = [0, \tmaxgamma{X})$, and whose maps $\p = (p^t)_{t \in T}$ are given by the following composition, as in Equation \eqref{eq:def_cech_bundle_proj}:
\begin{equation*}
\begin{tikzcd}[baseline=(current  bounding  box.center), column sep = 6em]
X^t \arrow[r, "\mathrm{proj}_2"] 
\arrow[rr, bend right = 20, "p^t", swap]
& \matrixspace{\R^m} \setminus \med{\Grass{d}{\R^m}} \arrow[r, "\proj{\cdot}{\Grass{d}{\R^m}}"]
& \Grass{d}{\R^m}.
\end{tikzcd}
\end{equation*}
\noindent
Let $t \in T$. The aim of this subsection is to describe a simplicial approximation to $p^t \colon X^t \rightarrow \Grass{d}{\R^m}$.
To do so, let us fix a triangulation $L$ of $\Grass{d}{\R^m}$. It comes with a homeomorphism $h \colon \Grass{d}{\R^m} \rightarrow \topreal{L}$.
We shall now triangulate the thickenings $X^t$ of the \v{C}ech set filtration.
The thickening $X^t$ is a subset of the metric space $(E, \gammaN{\cdot})$ which consists in a union of closed balls centered around points of $X$:
\begin{align*}
X^t = \bigcup_{x \in X} \closedballM{x}{t}{\gamma},
\end{align*}
where $\closedballM{x}{t}{\gamma}$ denotes the closed ball of center $x$ and radius $t$ for the norm $\gammaN{\cdot}$.
Let $\UU^t$ denote the cover $\left\{ \closedballM{x}{t}{\gamma}, ~x \in X\right\}$ of $X^t$, and let $\NN(\UU^t)$ be its nerve.
By the nerve theorem for convex closed covers \cite[Theorem 2.9]{boissonnat2018geometric}, the simplicial complex $\NN(\UU^t)$ is homotopy equivalent to its underlying set $X^t$. That is to say, there exists a continuous map $g^t \colon \topreal{\NN(\UU^t)} \rightarrow X^t$ which is a homotopy equivalence. 

As a consequence, in cohomological terms, the map $p^t \colon X^t \rightarrow \Grass{d}{E}$ is equivalent to the map $q^t$ defined as $q^t = h \circ p^t \circ g^t$.
\begin{equation}
\begin{tikzcd}
X^t \arrow[r, "p^t"] 
&[1em] \Grass{d}{\R^m} \arrow[d, "h"] \\
\topreal{\NN(\UU^t)} \arrow[u, "g^t"] \arrow[r, "q^t", dashed]
& \topreal{L}
\end{tikzcd}
\label{eq:map_approx_cech}
\end{equation}
This gives a way to compute the induced map $(p^t)^* \colon \coring{X^t} \leftarrow \coring{\Grass{d}{\R^m}}$ algorithmically: 
\begin{itemize}
\item Subdivise $\NN(\UU^t)$ until $q^t$ satisfies the star condition.
\item Choose a simplicial approximation $f^t$ to $q^t$.
\item Compute the induced map between simplicial cohomology groups $(f^t)^* \colon H^*(\NN(\UU^t)) \leftarrow H^*(L)$.
\end{itemize}
By correspondence between simplicial and singular cohomology, the map $(f^t)^*$ corresponds to $(p^t)^*$.
Hence the problem of computing $(p^t)^*$ is solved, if it were not for the following issue: in practice, the map $g^t\colon \topreal{\NN(\UU^t)} \rightarrow X^t$ given by the nerve theorem is not explicit. The rest of this subsection is devoted to showing that $g^t$ can be chosen canonically as the \emph{shadow map}.

\paragraph{Shadow map.}
We still consider $X^t$, the corresponding cover $\UU^t$ and its nerve $\NN(\UU^t)$. The underlying vertex set of the simplicial complex $\NN(\UU^t)$ is the set $X$ itself.
The shadow map $g^t \colon \topreal{ \NN(\UU^t) } \rightarrow X^t$ is defined as follows: 
for every simplex $\sigma = [x_0, ..., x_p] \in \NN(\UU^t)$ and every point $\sum_{i=0}^p \lambda_i x_i$ of $\topreal{\sigma}$ written in barycentric coordinates, associate the point $\sum_{i=0}^p \lambda_i x_i$ of $E$:
\begin{align*}
g^t \colon \sum_{i=0}^p \lambda_i x_i \in \topreal{\sigma} \longmapsto \sum_{i=0}^p \lambda_i x_i \in E.
\end{align*}
The following lemma states that this map is a homotopy equivalence.
We are not aware whether the general position assumption can be removed.

\begin{lemma}
\label{lem:shadowmap}
Suppose that $X$ is finite and in general position.
Then the shadow map $g^t \colon |\NN(\UU^t)| \rightarrow X^t$ is a homotopy equivalence. Consequently, it induces an isomorphism $(g^t)^* \colon H^*(|\NN(\UU^t)|) \leftarrow H^*(X^t)$.
\end{lemma}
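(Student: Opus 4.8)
The plan is to upgrade the abstract statement of the nerve theorem (\cite[Theorem 2.9]{boissonnat2018geometric}) --- which only asserts the \emph{existence} of a homotopy equivalence $\topreal{\NN(\UU^t)} \simeq X^t$ --- into the statement that the specific shadow map $g^t$ realizes one. The only structural input is that the members of $\UU^t$ are closed balls, hence convex, so that every nonempty finite intersection $\bigcap_{x \in \sigma} \closedballM{x}{t}{\gamma}$ is convex and therefore contractible. Before anything else I would check that $g^t$ really lands in $X^t$: if $\sigma = [x_0, \dots, x_p] \in \NN(\UU^t)$ and $z$ is a common point of the balls $\closedballM{x_i}{t}{\gamma}$, then for $y = \sum_i \lambda_i x_i$ the parallel-axis identity for the inner product of $\gammaN{\cdot}$ gives
\begin{equation*}
\sum_{i} \lambda_i \gammaN{y - x_i}^2 = \sum_i \lambda_i \gammaN{z - x_i}^2 - \gammaN{y - z}^2 \leq t^2,
\end{equation*}
so that $\min_i \gammaN{y - x_i} \leq t$ and $y \in X^t$. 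This convexity computation, and not general position, is what makes $g^t$ well defined.

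The core of the argument uses the Mayer--Vietoris blowup of the cover,
\begin{equation*}
B = \bigcup_{\sigma \in \NN(\UU^t)} \Big( \bigcap_{x \in \sigma} \closedballM{x}{t}{\gamma} \Big) \times \topreal{\sigma} ~\subseteq~ X^t \times \topreal{\NN(\UU^t)},
\end{equation*}
together with its two projections $p_Y \colon B \rightarrow X^t$ and $p_N \colon B \rightarrow \topreal{\NN(\UU^t)}$. This is the standard device behind the nerve theorem: the fibre of $p_N$ over an interior point of $\topreal{\sigma}$ is the contractible set $\bigcap_{x \in \sigma} \closedballM{x}{t}{\gamma}$, while the fibre of $p_Y$ over $y$ is the closed simplex on the set of balls containing $y$, again contractible; by the standard homotopy-colimit proof of the nerve theorem both $p_Y$ and $p_N$ are therefore homotopy equivalences. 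I would fix a homotopy inverse $s \colon \topreal{\NN(\UU^t)} \rightarrow B$ of $p_N$.

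Next I would connect $g^t$ to these projections by a single straight-line homotopy
\begin{equation*}
H\big((y,b), u\big) = (1-u)\, y + u\, g^t\big(p_N(y,b)\big), \qquad u \in [0,1],
\end{equation*}
which interpolates between $p_Y$ (at $u=0$) and $g^t \circ p_N$ (at $u=1$). The key point --- and the step I expect to be the main obstacle --- is that this homotopy must stay inside $X^t$, which is not convex. Here the blowup is exactly what is needed: if $(y,b) \in B$ has carrier $\sigma = [x_0, \dots, x_p]$, then $y$ lies within $\gammaN{\cdot}$-distance $t$ of \emph{every} vertex $x_i$, and writing $g^t(b) = \sum_i \lambda_i x_i$ the same inner-product computation (now with $y$ in the role of the common point) yields $\sum_i \lambda_i \gammaN{H((y,b),u) - x_i}^2 \leq t^2 - u(2-u)\,\gammaN{y - g^t(b)}^2 \leq t^2$, so $H((y,b),u) \in X^t$ for all $u$. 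Hence $p_Y \simeq g^t \circ p_N$ as maps $B \rightarrow X^t$.

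Finally I would run the formal homotopy chase: precomposing the homotopy $p_Y \simeq g^t \circ p_N$ with $s$ gives
\begin{equation*}
g^t \simeq g^t \circ p_N \circ s \simeq p_Y \circ s,
\end{equation*}
and $p_Y \circ s$ is a composite of homotopy equivalences, so $g^t$ is a homotopy equivalence; the induced map $(g^t)^*$ on cohomology is then an isomorphism. The delicate point is precisely the verification that $H$ never leaves $X^t$, which is where convexity of the balls and the blowup structure (forcing $y$ to be close to \emph{all} vertices of its carrier) are used. I would also remark that this route does not appear to require the general-position hypothesis; general position would instead be needed for the more geometric approach of realizing the shadow $\bigcup_\sigma \mathrm{conv}(\sigma)$ as an embedded complex deformation retracting onto $X^t$.
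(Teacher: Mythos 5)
Your proof is correct, but it takes a genuinely different route from the paper's. The paper factors the shadow map through the Delaunay complex: it invokes Edelsbrunner's theorem that the shadow map of the Delaunay complex (the nerve of the balls intersected with Voronoi cells) is a homotopy equivalence --- which is exactly where the general-position hypothesis is used --- together with the Bauer--Edelsbrunner collapse of the \v{C}ech complex onto the Delaunay complex, and concludes by the 2-out-of-3 property. You instead work with the Mayer--Vietoris blowup of the cover and exhibit an explicit straight-line homotopy from $p_Y$ to $g^t \circ p_N$; the crucial step is your parallel-axis estimate showing that the segment joining a point of $\bigcap_{x\in\sigma}\closedballM{x}{t}{\gamma}$ to the barycenter $\sum_i \lambda_i x_i$ never leaves $X^t$, and that computation is correct (as is the preliminary check that $g^t$ lands in $X^t$). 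What your approach buys is precisely what the paper flags as open just before the lemma: it removes the general-position hypothesis, since general position is needed only for Edelsbrunner's Delaunay argument and nowhere in the blowup construction (the acknowledgements credit L.~Scoccola with a strengthening of this lemma, presumably of this kind). The one point to state carefully is the input you treat as standard: that \emph{both} projections of the blowup of a finite closed convex cover are homotopy equivalences. This is true, but it is a specific closed-cover form of the nerve theorem and does not follow formally from contractibility of the point-set fibers alone, so it deserves a precise citation. What the paper's route buys in exchange for the extra hypothesis is brevity: two citations and a 2-out-of-3 argument.
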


\begin{proof}
Recall that $\UU^t = \left\{ \closedballM{x}{t}{\gamma}, x \in X \right \}$.
Let us consider a smaller cover. For every $x \in X$, let $\vor{x}$ denote the Voronoi cell of $x$ in the ambient metric space $(E, \gammaN{\cdot})$, and define
\begin{align*}
\VV^t = \left\{\closedballM{x}{t}{\gamma} \cap \vor{x}  , x \in X\right \}.
\end{align*}
The set $\VV^t$ is a cover of $X^t$, and its nerve $\NN(\VV^t)$ is known as the Delaunay complex. Let $h^t \colon \topreal{\NN(\VV^t)} \rightarrow X^t$ denote the shadow map of $\NN(\VV^t)$.
The Delaunay complex is a subcomplex of the \v{C}ech complex, hence we can consider the following diagram:
\begin{center}
\begin{tikzcd}
\arrow[rr, bend left, "h^t"] |\NN(\VV^t)| \arrow[r, hook]
&\arrow[r, "g^t", swap] |\NN(\UU^t)| 
&X^t .
\end{tikzcd}
\end{center}
Now, \citet[Theorem 3.2]{edelsbrunner1993union} has proven that the shadow map $h^t \colon |\NN(\VV^t)| \rightarrow X^t$ is a homotopy equivalence (it is required here that $X$ is in general position). 
Moreover, we know from \citet[Theorem 5.10]{bauer2017morse} that $\NN(\UU^t)$ collapses to $\NN(\VV^t)$. Therefore the inclusion $\topreal{\NN(\VV^t)} \hookrightarrow \topreal{\NN(\UU^t)}$ also is a homotopy equivalence.
By the 2-out-of-3 property of homotopy equivalences, we conclude that $g^t$ is a homotopy equivalence.
\end{proof}

\subsection{A sketch of algorithm}
Suppose that we are given a finite set $X \subset E = \R^n \times \matrixspace{\R^m}$. Choose $d \in \llbracket1, n-1\rrbracket$ and $\gamma > 0$. 
Consider the \v{C}ech bundle filtration of dimension $d$ of $X$.
Let $T = \left[0, \tmaxgamma{X}\right)$, $t \in T$ and $i \in \llbracket1,d\rrbracket$.
From the previous discussion we can infer an algorithm to solve the following problem:
\begin{center}
\fbox{\begin{minipage}{.95\linewidth}
Compute the persistent Stiefel-Whitney class $\pSFt{i}{X}{t}$ of the \v{C}ech bundle filtration of $X$, using a cohomology computation software.
\end{minipage}}
\end{center}
Denote:
\begin{itemize}
\itemsep0.2em 
\item $\X = (X^t)_{t\geq0}$ the \v{C}ech set filtration of $X$,
\item $\S$ the \v{C}ech simplicial filtration of $X$, and $g^t \colon \topreal{S^t} \rightarrow X^t$ the shadow map,
\item $L$ a triangulation of $\Grass{d}{\R^m}$ and $h \colon \Grass{d}{\R^m} \rightarrow \topreal{L}$ a homeomorphism,
\item $(\X, \p)$ the \v{C}ech bundle filtration of $X$,
\item $(\V, \vbb)$ the persistent cohomology module of $\X$,
\item $w_i \in H^i(\Grass{d}{\R^m})$ the $i^\text{th}$ Stiefel-Whitney class of the Grassmannian.
\end{itemize}
Let $t\in T$ and consider the map $q^t$, as defined in Equation \eqref{eq:map_approx_cech}:
\begin{equation*}
\begin{tikzcd}[baseline=(current  bounding  box.center), column sep = 4em]
\topreal{S^t} \arrow[r, "g^t", swap] 
\arrow[rrr, bend left = 18, "q^t"]
& X^t \arrow[r, "p^t", swap]
&\Grass{d}{\R^m} \arrow[r, "h", swap]
& \topreal{L}.
\end{tikzcd}
\end{equation*}
We propose the following algorithm:
\begin{itemize}
\itemsep0.2em 
\item Subdivise barycentrically $S^t$ until $q^t$ satisfies the star condition. Denote $k$ the number of subdivisions needed.
\item Consider a simplicial approximation $f^t \colon \subdiv{S^t}{k} \rightarrow L$ to $q^t$.
\item Compute the class $(f^t)^*(w_i)$. 
\end{itemize}
\noindent
The output $(f^t)^*(w_i)$ is equal to the persistent Stiefel-Whitney class $\pSFt{i}{X}{t}$ at time $t$, seen in the simplicial cohomology group $H^i(S^t) = H^i(\subdiv{S^t}{k})$.
In the following section, we gather some technical details needed to implement this algorithm in practice.

Note that this also gives a way to compute the lifebar of $\pSF{i}{X}$. This bar is determined by the value $\tdeatho = \inf \{t \in T, \pSF{i}{X} \neq 0\}$. This quantity can be approximated by dichotomic search, by computing the classes $\pSFt{i}{X}{t}$ for several values of $t$.
We point out that, in order to compute the value $\tdeatho$, there may exist a better algorithm than evaluating the class $\pSFt{i}{X}{t}$ several times.

Let us describe briefly such an algorithm when $i=1$, that is, when the first persistent Stiefel-Whitney class $\pSF{1}{X}$ is to be computed.
First, we remind the reader that the first cohomology group $H^1(\Grass{d}{\R^m})$ of the Grassmannian is generated by one element, the first Stiefel-Whitney class $w_1$. For any $t \in T$, consider the map $p^t$ as above, and the map induced in cohomology, $(p^t)^*\colon H^1(X^t) \leftarrow H^1(\Grass{d}{\R^m})$.
Since $\pSFt{1}{X}{t} = (p^t)^*(w_1)$, and since $H^1(\Grass{d}{\R^m})$ has dimension 1, we have that $\pSFt{1}{X}{t}$ is nonzero if and only if $\mathrm{rank}(p^t)^*$ is nonzero.

Next, let $C(p^t)$ denotes the \emph{mapping cone} of $p^t\colon X^t \rightarrow \Grass{d}{\R^m}$. 
This is a usual construction in algebraic topology.
In a few words, the mapping cone of a map is a topological space that contains information about the map.
The mapping cone $C(p^t)$ comes with a long exact sequence
$$ ... \longrightarrow
H^k(X^t)\longrightarrow
H^{k+1}(C(p^t)) \longrightarrow
H^{k+1}(\Grass{d}{\R^m}) \longrightarrow
H^{k+1}(X^t) \longrightarrow 
...$$
from which we deduce the formula
$$ 
\mathrm{rank}(p^t)^*=
\sum_{k = 1}^{+ \infty}
(-1)^k \bigg( 
\dim H^k(X^t) - \dim H^{k+1}(C(p^t)) + \dim H^{k+1} (\Grass{d}{\R^m})
\bigg)
$$

On the simplicial side, is not complicated to build a triangulation $C^t$ of the mapping cone $C(p^t)$, nor to build a simplicial filtration $(C^t)_{t \in T}$ of $\left(C(p^t)\right)_{t \in T}$.
It relies on finding simplicial approximations to the maps $q^t\colon S^t \rightarrow L$, as in the proof of \citet[Theorem 2C.5]{Hatcher_Algebraic}. We point out that, just as in the previous algorithm, we may have to apply barycentric subdivisions to $S^t$ here.
Now, the previous formula translates in simplicial cohomology as
$$ 
\mathrm{rank}(q^t)^*=
\sum_{k = 1}^{+ \infty}
(-1)^k \bigg( 
\dim H^k(S^t) - \dim H^{k+1}(C^t) + \dim H^{k+1} (L)
\bigg).
$$
All these terms can be computed efficiently by applying the persistent homology algorithm to the filtrations $(S^t)_{t \in T}$, $\left(C(p^t)\right)_{t \in T}$ and $(L)_{t \in T}$.
Finally, we identify the value $\tdeatho$ as the first value of $t \in T$ such that $\mathrm{rank}(q^t)^*$ is nonzero.

\section{An algorithm when $d=1$}
\label{sec:algorithm}

Even though the last sections described a theoretical way to compute the persistent Stiefel-Whitney classes, some concrete issues are still to be discussed:
\begin{itemize}
\item verifying that the star condition is satisfied,
\item the Grassmann manifold has to be triangulated,
\item in practice, the Vietoris-Rips filtration is preferred to the \v{C}ech filtration,
\item the parameter $\gamma$ has to be tuned.
\end{itemize}
The following subsections will elucidate these points. 
Concerning the first one, we are not aware of a computational-explicit process to triangulate the Grassmann manifolds $\Grass{d}{\R^m}$, except when $d=1$, which corresponds to the projective spaces $\Grass{1}{\R^m}$.
We shall then restrict to the case $d=1$.
Note that, in this case, the only nonzero Stiefel-Whitney classes are the first two (by Axiom 1 of Stiefel-Whitney classes). Since $w_0$ is always equal to 1, the only class to estimate is $w_1$.

\subsection{The star condition in practice}
\label{subsec:combinatorial_star_condition}

Let us get back to the context of Subsect. \ref{background:simplicial_approx}: $K, L$ are two simplicial complexes, $K$ is finite, and $g \colon \topreal{K} \rightarrow \topreal{L}$ is a continuous map. 
We have seen that finding a simplicial approximation to $g$ reduces to finding a small enough barycentric subdivision $\subdiv{K}{n}$ of $K$ such that $g \colon \topreal{\subdiv{K}{n}} \rightarrow \topreal{L}$ satisfies the star condition, that is, 
for every vertex $v$ of $\subdiv{K}{n}$, there exists a vertex $w$ of $L$ such that 
\begin{equation*}
g\left(\topreal{\closedStar{v}}\right) \subseteq \topreal{\Star{w}}.
\end{equation*}
\noindent
In practice, one can compute the closed star $\closedStar{v}$ from the finite simplicial complex $\subdiv{K}{n}$. However, computing $g\left(\topreal{\closedStar{v}}\right)$ requires to evaluate $g$ on the infinite set $\topreal{\closedStar{v}}$. In order to reduce the problem to a finite number of evaluations of $g$, we shall consider a related property that we call the \emph{weak star condition}.

\begin{definition}
A map $g\colon \topreal{K} \rightarrow \topreal{L}$ between geometric realizations of simplicial complexes $K$ and $L$ satisfies the \emph{weak star condition} if for every vertex $v$ of $\subdiv{K}{n}$, there exists a vertex $w$ of $L$ such that 
\begin{equation*}
\topreal{ g \left( \skeleton{\closedStar{v}}{0} \right) }  \subseteq \topreal{\Star{w}},
\end{equation*}
where $\skeleton{\closedStar{v}}{0}$ denotes the 0-skeleton of $\closedStar{v}$, i.e. its vertices.
\end{definition}

Observe that the practical verification of the condition $\topreal{ g \left( \skeleton{\closedStar{v}}{0} \right) }  \subseteq \topreal{\Star{w}}$ requires only a finite number of computations. 
Indeed, one just has to check whether every neighbor $v'$ of $v$ in the graph $\skeleton{K}{1}$, $v$ included, satisfies $g(v') \in \topreal{\Star{w}}$. 
The following lemma rephrases this condition by using the face map $\facemapK{L}\colon \topreal{L} \rightarrow L$.
We remind the reader that the face map is defined by the relation $x \in \facemapK{L}(x)$ for all $x \in \topreal{L}$.
\begin{lemma}
\label{lem:weak_star}
The map $g$ satisfies the weak star condition if and only if for every vertex $v$ of $K$, there exists a vertex $w$ of $L$ such that for every neighbor $v'$ of $v$ in $\skeleton{K}{1}$, we have
\begin{equation*}
w \in \facemapK{L}(g(v')).
\end{equation*}
\end{lemma}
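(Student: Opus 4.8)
The plan is to prove both directions by unwinding the weak star condition and reducing both the closed star and the open star to purely combinatorial data. Recall that $g$ satisfies the weak star condition at a vertex $v$ when there is a vertex $w$ of $L$ with $g(v') \in \topreal{\Star{w}}$ for every vertex $v'$ of $\closedStar{v}$. Since this quantifies over the same vertex $w$ as the condition in the lemma, it is enough to show, for a fixed $v$ and a fixed $w$, that $g(v') \in \topreal{\Star{w}}$ for all vertices $v'$ of $\closedStar{v}$ is equivalent to $w \in \facemapK{L}(g(v'))$ for all neighbors $v'$ of $v$ in $\skeleton{K}{1}$.

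First I would identify the vertex set $\skeleton{\closedStar{v}}{0}$ with the set of neighbors of $v$ in $\skeleton{K}{1}$, $v$ itself included. Indeed, a vertex $v'$ lies in $\closedStar{v}$ exactly when it is a vertex of some simplex $\sigma \in K$ containing $v$; then $\{v, v'\} \subseteq \sigma$ is a face of $\sigma$, hence an edge of $K$, so $v'$ is a neighbor of $v$. Conversely every neighbor $v'$ of $v$ spans an edge $\{v, v'\} \in \Star{v}$, so $v'$ is a vertex of $\closedStar{v}$. This matches the two quantification ranges appearing in the equivalence stated above.

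The heart of the argument is the pointwise equivalence, valid for any $x \in \topreal{L}$ and any vertex $w$ of $L$,
\begin{equation*}
x \in \topreal{\Star{w}} \iff w \in \facemapK{L}(x).
\end{equation*}
This rests on the fact that the open cells $\big\{\topreal{\sigma} : \sigma \in L\big\}$ form a partition of $\topreal{L}$, together with the defining relation $x \in \topreal{\facemapK{L}(x)}$ of the face map. The geometric realization of the open star, $\topreal{\Star{w}}$, is the union of the open cells $\topreal{\sigma}$ over the simplices $\sigma \in L$ containing $w$; hence $x$ lies in $\topreal{\Star{w}}$ precisely when the unique open cell $\topreal{\facemapK{L}(x)}$ containing $x$ has $w$ among its vertices, that is, when $w \in \facemapK{L}(x)$. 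Setting $x = g(v')$ turns each membership $g(v') \in \topreal{\Star{w}}$ into the condition $w \in \facemapK{L}(g(v'))$.

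Combining the two observations finishes the proof: the weak star condition at $v$ with witness $w$ holds if and only if $g(v') \in \topreal{\Star{w}}$ for every vertex $v'$ of $\closedStar{v}$, i.e.\ for every neighbor $v'$ of $v$ in $\skeleton{K}{1}$, which by the pointwise equivalence is exactly $w \in \facemapK{L}(g(v'))$ for all such $v'$. I do not expect a genuine obstacle; the only subtle point is that the open star $\Star{w}$ is not a subcomplex, so that $\topreal{\Star{w}}$ must be read as a union of open cells, and it is precisely this reading that makes its characterization through the face map correct.
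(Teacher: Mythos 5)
Your proof is correct and takes essentially the same route as the paper: the key step in both is the pointwise equivalence $x \in \topreal{\Star{w}} \iff w \in \facemapK{L}(x)$, derived from the partition of $\topreal{L}$ into open cells and the defining relation of the face map. Your additional care in identifying $\skeleton{\closedStar{v}}{0}$ with the neighbors of $v$ (including $v$ itself) is a point the paper only handles informally in the discussion preceding the lemma, so it is a welcome but not essentially different addition.
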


\begin{proof}
Let us show that the assertion ``$w \in \facemapK{L}(g(v'))$" is equivalent to ``$g(v') \in \topreal{\Star{w}}$".
Recall that the open star $\Star{w}$ consists of the simplices of $L$ that contain $w$. Moreover, the geometric realization $\topreal{\Star{w}}$ is the union of the $\topreal{\sigma}$ for $\sigma \in \Star{w}$.
As a consequence, $g(v')$ belongs to $\topreal{\Star{w}}$ if and only if it belongs to $\topreal{\sigma}$ for some simplex $\sigma \in L$ that contains $w$. Equivalently, the face map $\facemapK{L}(g(v'))$ contains $w$.
\end{proof}

Suppose that $g$ satisfies the weak star condition. 
Let $f\colon \skeleton{K}{0} \rightarrow \skeleton{L}{0}$ be a map, between vertex sets, such that for every $v \in \skeleton{K}{0}$,
\begin{equation*}
\topreal{ g \left( \skeleton{\closedStar{v}}{0} \right) }  \subseteq \topreal{\Star{f(v)}}.
\end{equation*}
According to the proof of Lemma \ref{lem:weak_star}, an equivalent formulation of this condition is: for all neighbor $v'$ of $v$ in $\skeleton{K}{1}$,
\begin{equation}
\label{eq:face_map_weakstarcondition}
f(v) \in \facemapK{L}(g(v')).
\end{equation}
Such a map is called a \emph{weak simplicial approximation to $g$}.
It plays a similar role as the simplicial approximations to $g$.

\begin{lemma}
\label{lem:combinatorial_is_simplicial}
If $f\colon \skeleton{K}{0} \rightarrow \skeleton{L}{0}$ is a weak simplicial approximation to $g\colon \topreal{K} \rightarrow \topreal{L}$, then $f$ is a simplicial map.
\end{lemma}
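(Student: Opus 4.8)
The plan is to verify directly the combinatorial criterion \eqref{background:eq:simplicial_map_2} that characterizes simplicial maps, namely that $f(\sigma) \in L$ for every simplex $\sigma \in K$. I would begin from an arbitrary simplex $\sigma = [v_0, \dots, v_p] \in K$, and the goal is to exhibit a single simplex of $L$ that contains all of the images $f(v_0), \dots, f(v_p)$ among its vertices.

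The key observation is that, since $\sigma$ is a simplex, any two of its vertices are joined by an edge in $\skeleton{K}{1}$; in particular, after fixing one vertex $v_0$, each $v_j$ has $v_0$ among its neighbors (with $v_0$ counted as a neighbor of itself, as the weak star condition permits). I would then set $\tau = \facemapK{L}(g(v_0))$, which is a well-defined simplex of $L$ by definition of the face map. Applying the reformulated weak approximation condition \eqref{eq:face_map_weakstarcondition} with $v = v_j$ and $v' = v_0$, for each $j \in \{0, \dots, p\}$, yields $f(v_j) \in \facemapK{L}(g(v_0)) = \tau$.

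It then follows that every $f(v_j)$ is a vertex of the single simplex $\tau$. Hence $f(\sigma) = \{f(v_0), \dots, f(v_p)\}$ is a nonempty subset of the vertex set of $\tau$, and since $L$ is a simplicial complex, every nonempty subset of the vertices of $\tau$ is again a face of $L$. Therefore $f(\sigma) \in L$, which is exactly condition \eqref{background:eq:simplicial_map_2}, so $f$ is a simplicial map.

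I do not expect a serious obstacle here: the whole argument rests on the single idea of routing all the images $f(v_j)$ through the one simplex $\facemapK{L}(g(v_0))$, rather than trying to work with the intersection of the simplices $\facemapK{L}(g(v'))$ over all neighbors $v'$. The only point requiring mild care is the bookkeeping of the neighbor relation, namely confirming that within a simplex every vertex is a neighbor of every other one, and that the weak star condition explicitly allows the choice $v' = v$.
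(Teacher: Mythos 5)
Your proof is correct, and it takes a slightly different (more elementary) route than the paper. The paper's proof observes that $\sigma$ lies in every closed star $\closedStar{v_i}$, deduces that each $\topreal{\Star{f(v_i)}}$ contains all of $g(v_0),\dots,g(v_p)$, and then invokes Lemma \ref{lem:intersection_stars} (Hatcher's characterization of nonempty intersections of open stars) to conclude that $[f(v_0),\dots,f(v_p)]$ is a simplex. You instead fix the single point $g(v_0)$, use the reformulation \eqref{eq:face_map_weakstarcondition} to place every $f(v_j)$ among the vertices of the single simplex $\tau = \facemapK{L}(g(v_0))$, and finish with the downward-closure axiom of simplicial complexes. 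The two arguments pass through the same intermediate fact --- $f(v_j)\in\facemapK{L}(g(v_0))$ for all $j$, equivalently $g(v_0)\in\topreal{\Star{f(v_j)}}$ for all $j$ --- but you have essentially inlined the easy direction of Lemma \ref{lem:intersection_stars} rather than citing it, which makes your proof self-contained. What the paper's symmetric formulation buys is reusability: the stronger statement that \emph{every} $g(v_j)$ lies in \emph{every} $\topreal{\Star{f(v_i)}}$ is exactly what gets recycled in the contiguity argument of Proposition \ref{prop:weak_star_strong_star}, whereas your single-basepoint version would need to be re-symmetrized there. Your bookkeeping is also right: within a simplex every two vertices are adjacent in $\skeleton{K}{1}$, the definition explicitly counts $v$ as its own neighbor, and the possible non-injectivity of $f$ is harmless since $f(\sigma)$ remains a nonempty subset of the vertices of $\tau$.
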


\begin{proof}
Let $\sigma = [v_0, ..., v_n]$ be a simplex of $K$. We have to show that $f(\sigma ) = [f(v_0), ..., f(v_n)]$ is a simplex of $L$.
Note that each closed star $\closedStar{v_i}$ contains $\sigma$.
Therefore each $\topreal{ g \left( \skeleton{\closedStar{v_i}}{0} \right) }$ contains $\topreal{ g \left( \skeleton{\sigma}{0} \right) } = \{g(v_0), ..., g(v_n)\}$.
Using the weak simplicial approximation property of $f$, we deduce that each $\topreal{\Star{f(v_i)}}$ contains $\{g(v_0), ..., g(v_n)\}$.
Using Lemma \ref{lem:intersection_stars} stated below, we obtain that $[f(v_0), ..., f(v_n)]$ is a simplex of $L$.
\end{proof}

\begin{lemma}[\citeauthor{Hatcher_Algebraic}, \citeyear{Hatcher_Algebraic}, Lemma 2C.2]
\label{lem:intersection_stars}
Let $w_0, ..., w_n$ be vertices of a simplicial complex $L$. Then $\bigcap_{i=0}^n \Star{w_i} \neq \emptyset$ if and only if $[w_0, ..., w_n]$ is a simplex of $L$.
\end{lemma}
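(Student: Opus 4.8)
The plan is to unwind the purely combinatorial definition of the open star and then invoke the downward-closure axiom of simplicial complexes; no geometry is needed here. First I would recall that, in the setup of this paper, the open star $\Star{w}$ of a vertex $w$ is the collection of all simplices $\nu \in L$ that contain $w$. Consequently $\bigcap_{i=0}^n \Star{w_i}$ is precisely the collection of simplices of $L$ that contain every $w_i$ as a vertex, so the whole statement reduces to a question about which vertex sets span a simplex of $L$.

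For the reverse implication I would argue directly: if $[w_0, \ldots, w_n]$ is a simplex of $L$, then it contains each $w_i$, hence it lies in $\Star{w_i}$ for every $i$, and therefore in $\bigcap_{i=0}^n \Star{w_i}$, which is thus non-empty.

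For the forward implication I would pick any simplex $\nu \in \bigcap_{i=0}^n \Star{w_i}$. By the description above, $\nu$ contains each of $w_0, \ldots, w_n$, so $\{w_0, \ldots, w_n\}$ is a non-empty subset of the vertex set of $\nu$. The defining axiom of a simplicial complex — every non-empty subset of a simplex is again a simplex — then yields that $\{w_0, \ldots, w_n\} = [w_0, \ldots, w_n]$ is a simplex of $L$, which closes the equivalence.

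I do not expect any genuine obstacle: the argument is essentially a translation of definitions. The only points requiring mild care are to keep the \emph{combinatorial} meaning of the star (a set of simplices, rather than its geometric realization, which is what appears in the surrounding lemmas) and to observe that the statement is unaffected if some of the $w_i$ coincide, since $[w_0, \ldots, w_n]$ denotes the underlying vertex set $\{w_0, \ldots, w_n\}$.
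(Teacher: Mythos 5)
Your argument is correct, and it is worth noting that it is not quite the argument behind the reference the paper cites. The paper gives no proof of its own here; it points to Hatcher's Lemma 2C.2, whose proof is geometric: one takes a point $x$ in the common intersection of the open stars viewed as subsets of $\topreal{L}$, observes that $x$ lies in the interior of a unique simplex $\sigma$ (this is the partition of $\topreal{L}$ by the sets $\topreal{\sigma}$, i.e.\ the face map), notes that $\topreal{\sigma} \subseteq \topreal{\Star{w_i}}$ forces $w_i \in \sigma$ for each $i$, and concludes by downward closure. Your proof replaces the point $x$ by a simplex $\nu$ in the combinatorial intersection and is therefore shorter and entirely combinatorial; it matches the paper's own definition of $\Star{w}$ as a set of simplices, so it proves the statement exactly as written. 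The one thing to be aware of is how the lemma is actually invoked later (in Lemma \ref{lem:combinatorial_is_simplicial} and Proposition \ref{prop:weak_star_strong_star}): there the available hypothesis is that the \emph{geometric realizations} $\topreal{\Star{f(v_i)}}$ share a point, so to feed your combinatorial version into those proofs one still needs the small bridge that $\bigcap_i \topreal{\Star{w_i}} \neq \emptyset$ if and only if $\bigcap_i \Star{w_i} \neq \emptyset$, which follows from the partition of $\topreal{L}$ into the sets $\topreal{\sigma}$ (equivalently, from the face map $\facemapK{L}$). You correctly flagged the combinatorial-versus-geometric distinction; making that bridge explicit would close the only remaining seam between your proof and its use in the paper.
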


As one can see from the definitions, the weak star condition is weaker than the star condition. Consequently, the simplicial approximation theorem admits the following corollary. 
\begin{corollary}
Consider two simplicial complexes $K, L$ with $K$ finite, and let $g\colon \topreal{K} \rightarrow \topreal{L}$ be a continuous map.
Then there exists $n \geq 0$ such that $g\colon \topreal{\subdiv{K}{n}} \rightarrow \topreal{L}$ satisfies the weak star condition.
\end{corollary}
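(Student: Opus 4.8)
The plan is to reduce the statement to the simplicial approximation theorem of Subsection \ref{background:simplicial_approx}, which already guarantees that after finitely many barycentric subdivisions the \emph{star condition} holds. Since the corollary only asks for the weaker \emph{weak star condition}, it then suffices to check that the star condition implies the weak star condition; this is the only piece of genuine content to verify, and it reduces to a straightforward set-inclusion argument.

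First I would record this elementary implication. Fix $n \geq 0$ and suppose $g \colon \topreal{\subdiv{K}{n}} \rightarrow \topreal{L}$ satisfies the star condition. Let $v$ be a vertex of $\subdiv{K}{n}$, and let $w$ be a vertex of $L$ with $g\left(\topreal{\closedStar{v}}\right) \subseteq \topreal{\Star{w}}$. Every vertex $v'$ of $\closedStar{v}$, i.e. every element of $\skeleton{\closedStar{v}}{0}$, is in particular a point of $\topreal{\closedStar{v}}$, so its image $g(v')$ lies in $\topreal{\Star{w}}$. Hence $\topreal{ g\left( \skeleton{\closedStar{v}}{0} \right) } \subseteq \topreal{\Star{w}}$, which is exactly the weak star condition for $v$ with the same witness $w$. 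As this holds for every vertex $v$, the star condition implies the weak star condition.

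Second, I would invoke the simplicial approximation theorem: since $K$ is finite and $g$ is continuous, there exists $n \geq 0$ such that $g \colon \topreal{\subdiv{K}{n}} \rightarrow \topreal{L}$ satisfies the star condition. Combining this with the implication above, the map $g \colon \topreal{\subdiv{K}{n}} \rightarrow \topreal{L}$ satisfies the weak star condition for this very $n$, which is the desired conclusion.

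I do not expect any real obstacle here: the entire analytic difficulty is absorbed into the quoted simplicial approximation theorem, and the remaining step is purely formal, amounting to the observation that the vertices of $\closedStar{v}$ are points of $\topreal{\closedStar{v}}$, so that a condition controlling $g$ on all of $\topreal{\closedStar{v}}$ a fortiori controls it on those finitely many vertices.
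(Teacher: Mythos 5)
Your proposal is correct and matches the paper's own argument: the paper likewise derives this corollary directly from the simplicial approximation theorem, noting that the weak star condition is implied by the star condition (which you verify explicitly by observing that the vertices of $\closedStar{v}$ are points of $\topreal{\closedStar{v}}$). No gaps.
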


\noindent
However, some weak simplicial approximations to $g$ may not be simplicial approximations, and may not even be homotopic to $g$.
Figure \ref{fig:12} gives such an example.

\begin{figure}[H]
\begin{minipage}{.32\linewidth}
\centering
\includegraphics[width=.6\linewidth]{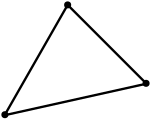}
\newline\noindent
$K$
\end{minipage}
\begin{minipage}{.32\linewidth}
\centering
\includegraphics[width=.7\linewidth]{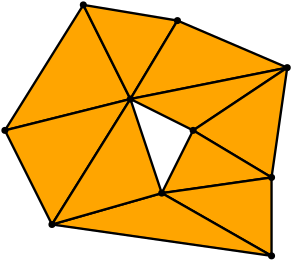}
\newline\noindent
$L$
\end{minipage}
\begin{minipage}{.32\linewidth}
\centering
\includegraphics[width=.7\linewidth]{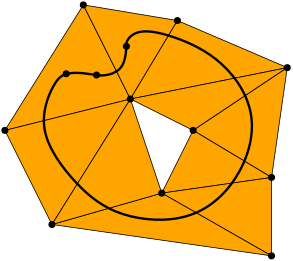}
\newline\noindent
$g\colon \topreal{K} \rightarrow \topreal{L}$
\end{minipage}
\caption{The map $g$ is non-trivial but admits a weak simplicial approximation which is constant.}
\label{fig:12}
\end{figure}

\noindent
Fortunately, these two notions coincides under the star condition assumption:

\begin{proposition}
\label{prop:weak_star_strong_star}
Suppose that $g$ satisfies the star condition.
Then every weak simplicial approximation to $g$ is a simplicial approximation.
\end{proposition}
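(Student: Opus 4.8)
The plan is to check the defining containment of a simplicial approximation directly, namely that $g(\topreal{\closedStar{v}}) \subseteq \topreal{\Star{f(v)}}$ for every vertex $v$ of $K$, extracting it from the weak star condition satisfied by $f$ together with the full star condition satisfied by $g$. First I would fix a vertex $v$ and pick a vertex $w$ of $L$ witnessing the star condition at $v$, so that $g(\topreal{\closedStar{v}}) \subseteq \topreal{\Star{w}}$; the goal then becomes to show that $f(v)$ is an equally good witness.

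Next I would record what the two hypotheses say at the vertices of $\closedStar{v}$. For every vertex $v'$ of $\closedStar{v}$ we have $g(v') \in \topreal{\Star{w}}$ by the star condition, which, by the carrier reformulation established in the proof of Lemma \ref{lem:weak_star}, means $w \in \facemapK{L}(g(v'))$; and the weak star condition gives $f(v) \in \facemapK{L}(g(v'))$. Hence $w$ and $f(v)$ are both vertices of the single simplex $\facemapK{L}(g(v'))$ of $L$ for every such $v'$. In particular $[w, f(v)]$ is a simplex of $L$.

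The heart of the argument is to upgrade this information from the vertices of $\closedStar{v}$ to all of $\topreal{\closedStar{v}}$. Given $x \in \topreal{\closedStar{v}}$ with carrier $\sigma = \facemapK{K}(x)$, the set $\sigma \cup \{v\}$ is a simplex of $K$, so each vertex of $\sigma$ is a neighbor of $v$; writing $\tau = \facemapK{L}(g(x))$, the star condition already yields $w \in \tau$, and it remains to show $f(v) \in \tau$, which is equivalent to $g(x) \in \topreal{\Star{f(v)}}$. To produce $f(v) \in \tau$ I would play the global confinement $g(\topreal{\closedStar{v}}) \subseteq \topreal{\Star{w}}$ against the vertexwise information of the previous step, using Lemma \ref{lem:intersection_stars} to detect which vertices of $L$ are forced to lie in the carrier $\tau$ of the interior point $g(x)$.

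I expect this promotion to be the main obstacle, and it is where the strength of the (non-weak) star condition must be used decisively. The weak star condition constrains $g$ only on the finite vertex set $\skeleton{\closedStar{v}}{0}$, whereas the conclusion concerns the image of the infinite set $\topreal{\closedStar{v}}$; the sole additional input is that this whole image is confined to the single open star $\topreal{\Star{w}}$. The delicate point is to exclude the possibility that, as $x$ moves across a face $\topreal{\sigma}$, the carrier $\facemapK{L}(g(x))$ keeps the vertex $w$ but drops $f(v)$ — precisely the behaviour that the star condition is meant to forbid and that a merely weak approximation cannot by itself control. Making this exclusion rigorous on all of $\topreal{\closedStar{v}}$, rather than only at its vertices, is the step I would expect to require the most care.
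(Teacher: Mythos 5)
Your reduction of the problem to showing $f(v) \in \facemapK{L}(g(x))$ for every $x \in \topreal{\closedStar{v}}$ is where the argument breaks down, and the ``promotion'' step you flag as delicate is not merely delicate --- it fails in general. Take $K$ a single edge $[v,v']$ and $L$ the full $2$-simplex $[a,b,c]$. Let $g$ send $v$ into the open edge $(a,b)$, send $v'$ into the open $2$-face, and let the image of the edge pass through the open edge $(a,c)$ while avoiding the closed edge $[b,c]$, so that every point of $g(\topreal{K})$ has $a$ in its carrier. Then $g$ satisfies the star condition at both vertices with witness $w=a$. The constant map $f \equiv b$ is a weak simplicial approximation, since $b$ belongs to both carriers $\facemapK{L}(g(v)) = \{a,b\}$ and $\facemapK{L}(g(v')) = \{a,b,c\}$; yet $g(\topreal{\closedStar{v}})$ contains a point with carrier $\{a,c\}$, so $g(\topreal{\closedStar{v}}) \not\subseteq \topreal{\Star{f(v)}}$. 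The vertex-level data simply does not control the carriers of $g$ at interior points of the simplices of $K$, and the star condition only forces those carriers to contain its own witness $w$, not $f(v)$; so the literal containment you are aiming for cannot be extracted from the hypotheses.

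The paper accordingly proves the proposition in a different (and weaker, but sufficient) sense: it never verifies the defining containment for $f$, but instead shows that $f$ is \emph{contiguous} to an arbitrary genuine simplicial approximation $f'$, which exists because $g$ satisfies the star condition. For a simplex $\sigma = [v_0,\dots,v_n]$ of $K$, the point $g(v_0)$ lies in every $\topreal{\Star{f(v_i)}}$ (by the weak approximation property, since each $v_0$ is a vertex of $\closedStar{v_i}$) and in every $\topreal{\Star{f'(v_i)}}$ (by the approximation property), so Lemma \ref{lem:intersection_stars} shows that $[f(v_0),\dots,f(v_n),f'(v_0),\dots,f'(v_n)]$ is a simplex of $L$; contiguity then gives $\topreal{f} \simeq \topreal{f'} \simeq g$, which is all that is used downstream. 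If you want to salvage your approach, you should redirect it toward contiguity with a simplicial approximation rather than toward the pointwise containment.
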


\begin{proof}
Let $f$ be a weak simplicial approximation to $g$, and $f'$ any simplicial approximation. 
Let us show that $f$ and $f'$ are contiguous simplicial maps.
Let $\sigma = [v_0, ..., v_n]$ be a simplex of $K$. We have to show that $[f(v_0), ..., f(v_n), f'(v_0), ..., f'(v_n)]$ is a simplex of $L$.
As we have seen in the proof of Lemma \ref{lem:combinatorial_is_simplicial}, each $\topreal{ g \left( \skeleton{\closedStar{v_i}}{0} \right) }$ contains $\{g(v_0), ..., g(v_n)\}$.
Therefore, by definition of weak simplicial approximations and simplicial approximations, each $\topreal{\Star{f(v_i)}}$ and $\topreal{\Star{f'(v_i)}}$ contains $\{g(v_0), ..., g(v_n)\}$. We conclude by applying Lemma \ref{lem:intersection_stars}. 
\end{proof}

Remark that the proof of this proposition can be adapted to obtain the following fact: any two weak simplicial approximations are equivalent---as well as any two simplicial approximations.

Let us comment Proposition \ref{prop:weak_star_strong_star}.
If $K$ is subdivised enough, then every weak simplicial approximation to $g$ is homotopic to $g$.
We face the following problem in practice: the number of subdivisions needed by the star condition is not known.
In order to work around this problem, we propose to subdivise the complex $K$ until it satisfies the weak star condition, and then use a weak simplicial approximation to $g$.
However, such a weak simplicial approximation may not be homotopic to $g$, and our algorithm would output a wrong result.

To close this subsection, we state a lemma that gives a quantitative idea of the number of subdivisions needed by the star condition.
We say that a Lebesgue number for an open cover $\UU$ of a compact metric space $X$ is a positive number $\epsilon$ such that every subset of $X$ with diameter less than $\epsilon$ is included in some element of the cover $\UU$.

\begin{lemma}
Let $\topreal{K}, \topreal{L}$ be endowed with metrics.
Suppose that $g\colon \topreal{K} \rightarrow \topreal{L}$ is $l$-Lipschitz with respect to these metrics. Let $\epsilon$ be a Lebesgue number for the open cover $\left \{ \topreal{\Star{w}}, w \in L \right\}$ of $\topreal{L}$. 
Let $p$ be the dimension of $K$ and $D$ an upper bound on the diameter of its faces.
Then for $n > \log(\frac{Dl}{\epsilon})\big/\log(\frac{p+1}{p})$, the map $g\colon \topreal{\subdiv{K}{n}}\rightarrow \topreal{L}$ satisfies the star condition.
\end{lemma}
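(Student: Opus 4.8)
The plan is to translate the star condition into a single diameter estimate and then feed that estimate into the Lebesgue number hypothesis. Fix $n$ as in the statement and let $v$ be any vertex of $\subdiv{K}{n}$. Since the geometric realization is unchanged by subdivision, $\topreal{\subdiv{K}{n}} = \topreal{K}$ as metric spaces, so $g$ is still $l$-Lipschitz there. By the very definition of a Lebesgue number, it suffices to prove that $\diam{g\left(\topreal{\closedStar{v}}\right)} < \epsilon$: the set $g\left(\topreal{\closedStar{v}}\right)$ then lies inside some element $\topreal{\Star{w}}$ of the cover $\left\{\topreal{\Star{w}}, w \in L\right\}$, which is exactly the star condition at $v$. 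As $v$ is arbitrary, this yields the lemma.

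First I would use the Lipschitz hypothesis to reduce to a combinatorial quantity, writing $\diam{g\left(\topreal{\closedStar{v}}\right)} \le l \cdot \diam{\topreal{\closedStar{v}}}$, so that it remains to control the diameter of a closed star inside $\subdiv{K}{n}$. For this I would invoke the elementary observation that every point of $\topreal{\closedStar{v}}$ lies in some simplex of $\subdiv{K}{n}$ containing $v$, hence at distance at most $\mathrm{mesh}(\subdiv{K}{n})$ from $v$, where $\mathrm{mesh}$ denotes the largest diameter of a simplex. The triangle inequality then gives $\diam{\topreal{\closedStar{v}}} \le 2\,\mathrm{mesh}(\subdiv{K}{n})$.

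The remaining ingredient is the classical mesh-shrinking estimate for barycentric subdivisions, which is the quantitative heart of the simplicial approximation theorem: for a complex of dimension $p$ all of whose faces have diameter at most $D$, one has $\mathrm{mesh}(\subdiv{K}{n}) \le \left(\tfrac{p}{p+1}\right)^n D$, obtained by induction on $n$ from the one-step bound $\mathrm{mesh}(\subdiv{K}{1}) \le \tfrac{p}{p+1}\,\mathrm{mesh}(K)$. Chaining the three inequalities gives $\diam{g\left(\topreal{\closedStar{v}}\right)} \le 2l\left(\tfrac{p}{p+1}\right)^n D$, and requiring this to be strictly below $\epsilon$ is equivalent, after taking logarithms and using $\log\tfrac{p+1}{p} > 0$, to $n > \log\!\left(\tfrac{2Dl}{\epsilon}\right)\big/\log\!\left(\tfrac{p+1}{p}\right)$.

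I expect the only delicate point to be the bookkeeping of the multiplicative constant. With the diameter-based Lebesgue number adopted in the statement, the natural argument produces the factor $2$ inside the logarithm, while the stated bound omits it. This is harmless: one recovers the stated inequality verbatim either by letting $D$ bound the diameters of the closed stars $\topreal{\closedStar{v}}$ rather than those of the individual faces, or by passing to the ball formulation $g\left(\topreal{\closedStar{v}}\right) \subseteq \closedball{g(v)}{l\,\mathrm{mesh}(\subdiv{K}{n})}$ and comparing radii. Apart from this constant, the proof is just the composition of the Lipschitz bound, the star-versus-mesh comparison, and the subdivision shrinking estimate.
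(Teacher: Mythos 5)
Your proof is correct and follows essentially the same route as the paper's: reduce the star condition to the diameter estimate $\diam{g\left(\topreal{\closedStar{v}}\right)} < \epsilon$ via the Lebesgue number, pass through the Lipschitz bound, and close with the $\left(\tfrac{p}{p+1}\right)^n$ mesh-shrinking estimate for barycentric subdivision. The one place you diverge is in the bookkeeping of the constant, and there you are in fact more careful than the paper: its proof silently invokes ``$\diam{\topreal{\closedStar{v}}} \leq D$'' even though the hypothesis only bounds the diameters of individual faces, which is exactly the factor of $2$ you identify; your observation that the stated bound holds verbatim once $D$ is read as a bound on closed-star diameters (or with a $2$ inserted in the logarithm otherwise) is the right resolution.
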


\begin{proof}
The map $g$ satisfies the star condition if for every vertex $v$ of $K$, there exists a vertex $w$ of $L$ such that $g(\topreal{\closedStar{v}}) \subseteq \topreal{\Star{w}}$. 
Since the cover $\left \{ \topreal{\Star{w}}, w \in L \right\}$ admits $\epsilon$ as a Lebesgue number, it is enough for $v$ to satisfy the following inequality:
\begin{equation}
\label{eq:proof_number_subdivision}
\diam{ g(\topreal{\closedStar{v}}) } < \epsilon.
\end{equation}
Since $g$ is $l$-Lipschitz, we have $\diam{ g\left(\topreal{\closedStar{v}} \right) } \leq l\cdot \diam{ \topreal{\closedStar{v}} }$. Using the hypothesis $\diam{ \topreal{\closedStar{v}} } \leq D$, Equation \eqref{eq:proof_number_subdivision} leads to the condition $D l < \epsilon$.
Now, we use the fact that a barycentric subdivision reduces the diameter of each face by a factor $\frac{p}{p+1}$. After $n$ barycentric subdivision, the last inequality rewrites $\left( \frac{p}{p+1} \right)^n D l < \epsilon$. It admits $n > \log(\frac{Dl}{\epsilon})\big/\log(\frac{p+1}{p})$ as a solution.
\end{proof}

\subsection{Triangulating the projective spaces}
As we described in Subsect. \ref{subsec:combinatorial_star_condition}, the algorithm we propose rests on a triangulation $L$ of the Grassmannian $\Grass{1}{\R^m}$, together with the map $\facemapK{L}  \circ h \colon \Grass{1}{\R^m} \rightarrow L$, where $h \colon \Grass{1}{\R^m} \rightarrow \topreal{L}$ is a homeomorphism and $\facemapK{L} \colon \Grass{1}{\R^m} \rightarrow L$ is the face map.
In the following, we also call $\facemap:= \facemapK{L}  \circ h$ the face map.
\medbreak
We shall use the triangulation of the projective space $\Grass{1}{\R^m}$ by \cite{von1987minimal}. It uses the fact that the quotient of the sphere $\S_{m-1}$ by the antipodal relation gives $\Grass{1}{\R^m}$.
Let $\Delta^{m}$ denote the standard $m$-simplex, $v_0, ..., v_m$ its vertices, and $\partial \Delta^m$ its boundary. 
The simplicial complex $\partial \Delta^m$ is a triangulation of the sphere $\mathbb{S}_{m-1}$.
Denote its first barycentric subdivision as $\subdiv{\partial \Delta^m}{1}$. The vertices of $\subdiv{\partial \Delta^m}{1}$ are in bijection with the non-empty proper subsets of $\{v_0, ..., v_m\}$.
Consider the equivalence relation on these vertices which associates a vertex to its complement. 
The quotient simplicial complex under this relation, $L$, is a triangulation of $\Grass{1}{\R^m}$.
Figures \ref{fig:13} and \ref{fig:14} represent this construction.

\begin{figure}[H]
\begin{minipage}{.23\linewidth}
\centering
\includegraphics[width=1\linewidth]{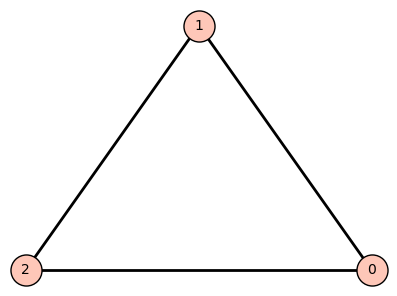}
$\partial \Delta^2$
\end{minipage}
\begin{minipage}{.23\linewidth}
\centering
\includegraphics[width=1\linewidth]{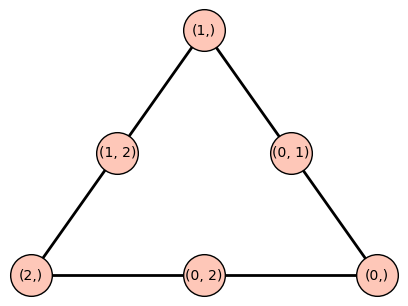}
$\subdiv{\partial \Delta^2}{1}$
\end{minipage}
\begin{minipage}{.23\linewidth}
\centering
\includegraphics[width=1\linewidth]{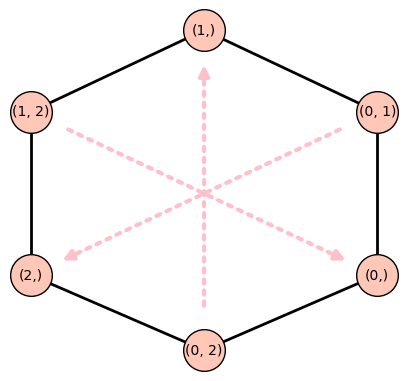}
Equivalence relation
\end{minipage}
\begin{minipage}{.23\linewidth}
\centering
\includegraphics[width=1\linewidth]{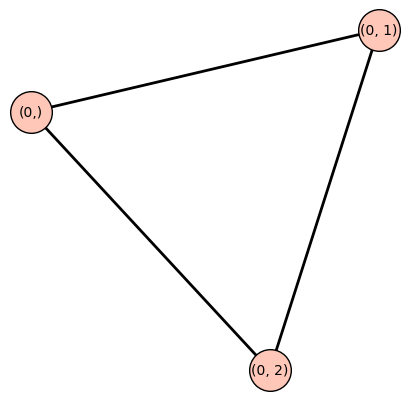}
Quotient complex $L$
\end{minipage}
\caption{Triangulating $\Grass{1}{\R^2}$.}
\label{fig:13}
\end{figure}

Let us now describe how to define the homeomorphism $h\colon \Grass{1}{\R^m} \rightarrow \topreal{L}$.
First, embed $\Delta^m$ in $\R^{m+1}$ via $v_i \mapsto (0, ..., 0, 1, 0, ...)$, where $1$ sits at the $i^\text{th}$ coordinate. 
Its image lies on a $m$-dimensional affine subspace $P$, with origin being the barycenter of $v_0, ..., v_m$. Seen in $P$, the vertices of $\Delta^m$ now belong to the sphere centered at the origin and of radius $\sqrt{\frac{m}{m+1}}$ (see Figure \ref{fig:14}). Let us denote this sphere as $\mathbb{S}_{m-1}$.
Next, subdivise barycentrically $\partial \Delta^m$ once, and project each vertex of $\subdiv{\partial \Delta^m}{1}$ on $\mathbb{S}_{m-1}$. 
By taking the convex hulls of its faces, we now see $\topreal{\subdiv{\partial \Delta^m}{1}}$ as a subset of $P$.
Define an application $p\colon \mathbb{S}_{m-1} \rightarrow \topreal{\subdiv{\partial \Delta^m}{1}}$ as follows: for every $x \in \mathbb{S}_{m-1}$, the image $p(x)$ is the unique intersection point between the segment $[0, x]$ and the set $\topreal{\subdiv{\partial \Delta^m}{1}}$.
The application $p$ can also be seen as the inverse function of the projection on $\S_{m-1}$, written $\text{proj}_{\S_{m-1}}\colon \topreal{\subdiv{\partial \Delta^m}{1}} \rightarrow \mathbb{S}_{m-1}$.
As a consequence, we can factorize  $p\colon \mathbb{S}_{m-1} \rightarrow \topreal{\subdiv{\partial \Delta^m}{1}}$ as
\begin{align*}
h\colon \left(\mathbb{S}_{m-1} / {\sim} \right) \rightarrow \left(\topreal{\subdiv{\partial \Delta^m}{1}} / {\sim} \right).
\end{align*}
Using Lemma \ref{lem:identification} stated below, we can identify these spaces with 
\begin{align*}
h\colon \Grass{1}{\R^m}  \rightarrow \topreal{L},
\end{align*}
giving the desired triangulation.

\begin{figure}[H]
\begin{minipage}{.32\linewidth}
\centering
\includegraphics[width=.7\linewidth]{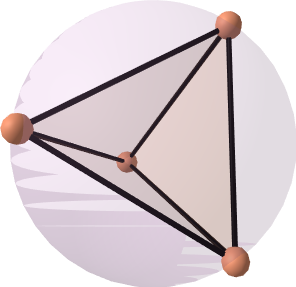}
\newline\noindent
$\partial \Delta^3$ is included in $\mathbb{S}_{m-1}$
\end{minipage}
\begin{minipage}{.32\linewidth}
\centering
\includegraphics[width=.7\linewidth]{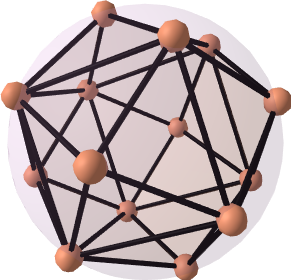}
\newline\noindent
$\subdiv{\partial \Delta^3}{1}$ and $\mathbb{S}_{m-1}$
\end{minipage}
\begin{minipage}{.32\linewidth}
\centering
\includegraphics[width=.7\linewidth]{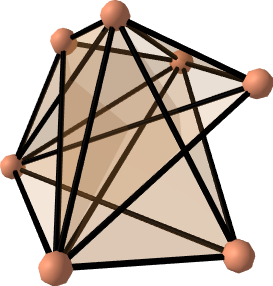}
\newline\noindent
$L$
\end{minipage}
\caption{Triangulating $\Grass{1}{\R^3}$.}
\label{fig:14}
\end{figure}

\begin{lemma}
For any vertex $x \in \subdiv{\partial \Delta^m}{1}$, denote by $\topreal{x}$ its embedding in $P$. 
Let $-\topreal{x}$ denote the image of $\topreal{x}$ by the antipodal relation on $\S_{m-1}$. Denote by $y$ the image of $x$ by the relation on $\subdiv{\partial \Delta^m}{1}$. Then $y = -\topreal{x}$.

More generally, pulling back the antipodal relation onto $\topreal{\subdiv{\partial \Delta^m}{1}}$ via $p$ gives the relation we defined on $\subdiv{\partial \Delta^m}{1}$.
\label{lem:identification}
\end{lemma}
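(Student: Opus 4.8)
The plan is to reduce the statement to a single explicit linear identity among the barycenters, exploiting that both the complement involution and the radial map $p$ are completely explicit. First I would fix coordinates: set $n = m+1$, embed $\Delta^m$ in $\R^n$ by sending the $i$-th vertex to the basis vector $e_i$, so that the barycenter is $b = \frac1n(e_1 + \dots + e_n)$ and $\S_{m-1}$ is the sphere of $P$ of radius $R = \sqrt{m/(m+1)}$ centred at $b$. A vertex $x$ of $\subdiv{\partial\Delta^m}{1}$ is a non-empty proper subset $S \subsetneq \{1,\dots,n\}$; before projection it sits at the barycenter $g_S = \frac{1}{|S|}\sum_{i \in S} e_i$, and its position $\topreal{x}$ on $\S_{m-1}$ is the radial projection $b + R\,(g_S-b)/\eucN{g_S - b}$. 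Since radial projection from $b$ preserves rays, $\topreal x$ and $\topreal y$ are antipodal on $\S_{m-1}$ if and only if $g_y - b$ is a \emph{negative} multiple of $g_S - b$; so the whole claim reduces to comparing these difference vectors for $S$ and $S^c$.

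Next I would run the comparison. Writing $s = |S|$, the $i$-th coordinate of $g_S - b$ equals $\frac1s - \frac1n$ when $i \in S$ and $-\frac1n$ when $i \notin S$, while the $i$-th coordinate of $g_{S^c} - b$ equals $-\frac1n$ when $i \in S$ and $\frac{1}{n-s} - \frac1n$ when $i \notin S$. A one-line check of the two cases gives $g_{S^c} - b = -\frac{s}{n-s}\,(g_S - b)$. Because $\frac{s}{n-s} > 0$, the vectors are antiparallel, hence their radial projections obey $\topreal{x} - b = -(\topreal{y} - b)$ with $y = S^c$, i.e. $\topreal{y} = -\topreal{x}$ on $\S_{m-1}$. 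This is precisely the first assertion of the lemma.

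For the general statement I would note that the computation above holds for every vertex, so the antipodal map on $\S_{m-1}$, transported back through $p$, pairs the projected vertex of each $S$ with the projected vertex of $S^c$ --- exactly the pairing $S \leftrightarrow S^c$ used to define $L$. To confirm that this vertex pairing really encodes the full relation on $\subdiv{\partial\Delta^m}{1}$, I would recall that $S \mapsto S^c$ is an inclusion-reversing bijection of proper non-empty subsets, so it carries each flag $S_0 \subsetneq \dots \subsetneq S_k$ to the flag $S_k^c \subsetneq \dots \subsetneq S_0^c$; hence it is a simplicial involution of $\subdiv{\partial\Delta^m}{1}$, and such an involution is determined by its restriction to the vertices. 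Thus the relation pulled back via $p$ coincides with the combinatorial complement relation defining $L$.

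I expect the only real care to be in the bookkeeping rather than in any conceptual difficulty: one must keep $\topreal{x}$ as the position of the vertex \emph{after} radial projection onto $\S_{m-1}$ (so that its antipode is meaningful), and use that radial projection from $b$ turns the antiparallelism of $g_S - b$ and $g_{S^c} - b$ into genuine antipodality on the sphere. Once the coordinate identity $g_{S^c} - b = -\frac{s}{n-s}(g_S - b)$ is in hand, both parts follow at once.
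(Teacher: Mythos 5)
Your proof is correct and follows essentially the same route as the paper's: both arguments reduce the claim to the antiparallelism of the vectors from the barycenter to the barycenters of $S$ and $S^c$, the paper via the identity $\sum_i v_i = 0$ (origin placed at the barycenter) and you via the explicit coordinate computation $g_{S^c} - b = -\frac{s}{n-s}(g_S - b)$, which is the same fact. Your closing remark that the complement map is a simplicial involution determined by its vertices is a slightly cleaner way of packaging what the paper dispatches with ``applying the same reasoning'' to each simplex.
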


\begin{proof}
Pick a vertex $x$ of $\subdiv{\partial \Delta^m}{1}$. 
It can be described as a proper subset $\{v_i, i \in I\}$ of the vertex set $\skeleton{(\partial \Delta^m)}{0} = \{v_0, ..., v_m\}$, where $I \subset \llbracket0,m\rrbracket$.
According to the relation on $(\partial \Delta^m)$, the vertex $x$ is in relation with the vertex $y$ described by the proper subset $\{v_i, i \in \complementaire{I}\}$. 
The point $x$ can be written in barycentric coordinates as $\frac{1}{\card{I}} \sum_{i \in I} \topreal{v_i}$.
Seen in $P$, $\topreal{x}$ can be written $\topreal{x} = \projj{\sum_{i \in I} v_i}{\S_{m-1}}$.
Similarly, $\topreal{y}$ can be written $\topreal{y} = \projj{\sum_{i \in \complementaire{I}} v_i}{\S_{m-1}}$.

Now, denote by 0 the origin of the hyperplane $P$, and embed the vertices $v_0, ..., v_m$ in $P$. Observe that
\begin{align*}
0 
= \sum_{i \leq 0} v_i
= \sum_{i \in I} v_i + \sum_{i \in \complementaire{I}} v_i.
\end{align*}
Hence $- \sum_{i \in I} v_i = \sum_{i \in \complementaire{I}} v_i$, and we deduce that 
\begin{align*}
-\topreal{x} 
= \projj{- \sum_{i \in I} v_i}{\S_{m-1}}
=  \projj{\sum_{i \in \complementaire{I}} v_i}{\S_{m-1}}
= \topreal{y}.
\end{align*}
\noindent
Applying the same reasoning, one obtains the following result: for every simplex $\sigma$ of $\subdiv{\partial \Delta^m}{1}$, if $\nu$ denotes the image of $\sigma$ by the relation on $\subdiv{\partial \Delta^m}{1}$, then the image of $\topreal{\sigma}$ by the antipodal relation is also $\topreal{\nu}$.
As a consequence, these two relations coincide.
\end{proof}

At a computational level, let us describe how to compute the face map $\facemap \colon \Grass{1}{\R^m} \rightarrow L$.
Since $\facemap$ can be obtained as a quotient, it is enough to compute the face map of the sphere, $\facemap' \colon \mathbb{S}_{m-1} \rightarrow \subdiv{\partial \Delta^m}{1}$, which corresponds to the homeomorphism $p\colon \S_{m-1} \rightarrow \topreal{\subdiv{\partial \Delta^m}{1}}$. It is given by the following lemma, which can be used in practice.

\begin{lemma}
For every $x \in \S_{m-1}$, the image of $x$ by $\facemap'$ is equal to the intersection of all maximal faces $\sigma = [w_0, ..., w_m]$ of $\subdiv{\partial \Delta^m}{1}$ that satisfies the following conditions: denoting by $x_0$ any point of the affine hyperplane spanned by $\{w_0, ..., w_m\}$, and by $h$ a vector orthogonal to the corresponding linear hyperplane, 
\begin{itemize}
\item the inner product $\eucP{x}{h}$ has the same sign as $\eucP{x_0}{h}$,
\item the point $\frac{\eucP{x_0}{h} }{\eucP{x}{h}} x$, which is included in the affine hyperplane spanned by $\{w_0, ..., w_m\}$, has nonnegative barycentric coordinates.
\end{itemize}
\end{lemma}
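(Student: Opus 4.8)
The plan is to reduce the statement to an elementary ray--hyperplane computation followed by a combinatorial identity about the carrier of a point. First recall that, by definition of the face map, $\facemap'(x)=\facemapK{\subdiv{\partial \Delta^m}{1}}(p(x))$ is the unique simplex of $\subdiv{\partial \Delta^m}{1}$ whose relative interior contains $p(x)$; and, as established in the construction of $p$, the point $p(x)$ is the \emph{unique} intersection of the ray $\R_{>0}\,x$ with $\topreal{\subdiv{\partial \Delta^m}{1}}$ (viewed in $P$). Hence a maximal face $\sigma$ carries $p(x)$, i.e. $p(x)\in\topreal{\sigma}$, if and only if the ray $\R_{>0}\,x$ meets the convex hull $\topreal{\sigma}$ of the projected vertices of $\sigma$. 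So I would first show that the two conditions of the lemma are exactly equivalent to ``$\R_{>0}\,x$ meets $\topreal{\sigma}$'', and then prove that intersecting the maximal faces carrying $p(x)$ returns its carrier.

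For the first part, fix a maximal face $\sigma$; its projected vertices are affinely independent in $P$ and span an affine hyperplane $H_\sigma=\{y\in P:\eucP{y}{h}=\eucP{x_0}{h}\}$, where $x_0\in H_\sigma$ and $h$ is normal to its direction. Since the vertices lie on $\mathbb{S}_{m-1}$ and $0$ is its center, $H_\sigma$ does not pass through $0$, so $\eucP{x_0}{h}\neq 0$. The ray $\R_{>0}\,x$ meets $H_\sigma$ at $t\,x$ with $t\,\eucP{x}{h}=\eucP{x_0}{h}$, i.e.
\[
t=\frac{\eucP{x_0}{h}}{\eucP{x}{h}},
\]
which is a positive scalar precisely when $\eucP{x}{h}$ and $\eucP{x_0}{h}$ share the same sign; this is the first condition (and it forces $\eucP{x}{h}\neq 0$, so the degenerate case of a ray parallel to $H_\sigma$ is automatically excluded). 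Once $t\,x\in H_\sigma$, it belongs to $\topreal{\sigma}$ if and only if its barycentric coordinates with respect to the vertices of $\sigma$ are nonnegative (they sum to $1$ automatically, as $t\,x$ lies in the affine span $H_\sigma$); this is the second condition. Therefore the two conditions hold simultaneously if and only if $\R_{>0}\,x$ meets $\topreal{\sigma}$, and by uniqueness of $p(x)$ on the ray this is equivalent to $p(x)\in\topreal{\sigma}$, i.e. to $\sigma$ being a maximal coface of the carrier $\tau:=\facemap'(x)$.

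It remains to identify $\tau$ with the intersection $\bigcap\{\sigma\ \text{maximal}:\tau\subseteq\sigma\}$ of the maximal faces satisfying the two conditions. The inclusion $\tau\subseteq\bigcap\sigma$ is immediate. For the reverse inclusion I would use that $\subdiv{\partial \Delta^m}{1}$ triangulates the sphere $\mathbb{S}_{m-1}$, so the link of any nonmaximal face $\tau$ is a triangulated sphere of dimension $\geq 0$; writing each maximal coface of $\tau$ as $\tau\ast\rho$ with $\rho$ a maximal face of $\mathrm{lk}(\tau)$, and noting that a sphere is not a cone---so no vertex belongs to all its maximal faces---we get $\bigcap_\rho\rho=\emptyset$ and hence $\bigcap_\sigma\sigma=\tau$. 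This combinatorial collapse of the maximal cofaces onto the carrier is the main point to secure; the ray--hyperplane computation above is routine once the sign convention in the first condition and the affine normalization in the second are fixed, including the boundary case where $p(x)$ lies in a lower-dimensional face (then several maximal faces satisfy both conditions and their intersection correctly returns that lower face).
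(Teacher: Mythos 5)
Your proof is correct and follows essentially the same route as the paper's: the same ray--hyperplane intersection producing the scalar $\frac{\eucP{x_0}{h}}{\eucP{x}{h}}$, the same barycentric-coordinate test for membership in the closed simplex, and the same reduction of $\facemap'(x)$ to the intersection of the maximal faces whose closures contain $p(x)$. The one place you go further is in actually proving the combinatorial identity $\bigcap_\sigma \sigma = \tau$ via the link-is-a-sphere argument, a step the paper dispatches with a bare ``equivalently''; this is a welcome supplement rather than a divergence.
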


\begin{proof}
Recall that for every $x \in \mathbb{S}_{m-1}$, the image $p(x)$ is defined as the unique intersection point between the segment $[0, x]$ and the set $\topreal{\subdiv{\partial \Delta^m}{1}}$. 
Besides, the face map $\facemap'(x)$ is the unique simplex $\sigma \in \subdiv{\partial \Delta^m}{1}$ such that $p(x) \in \topreal{\sigma}$.
Equivalently, $\facemap'(x)$ is equal to the intersection of all maximal faces $\sigma \in \subdiv{\partial \Delta^m}{1}$ such that $p(x)$ belongs to the closure $\overline{ \topreal{\sigma} }$.

Consider any maximal face $\sigma = [w_0, ..., w_m]$ of $\subdiv{\partial \Delta^m}{1}$.
The first condition of the lemma ensures that the segment $[0,x]$ intersects the affine hyperplane spanned by $\{w_0, ..., w_m\}$.
In this case, a computation shows that this intersection consists of the point $\frac{\eucP{x_0}{h} }{\eucP{x}{h}} x$.
Then, the second condition of the lemma tests whether this point belongs to the convex hull of $\{w_0, ..., w_k\}$.
In conclusion, if $\sigma$ satisfies these two conditions, then $p(x) \in \overline{ \topreal{\sigma} }$.
\end{proof}

As a remark, let us point out that the verification of the conditions of this lemma is subject to numerical errors. In particular, the point $\frac{\eucP{x_0}{h} }{\eucP{x}{h}} x$ may have nonnegative coordinates, yet mathematical softwares may return (small) negative values.
Consequently, the algorithm may recognize less maximal faces that satisfy these conditions, hence return a simplex that strictly contains the wanted simplex $\facemap'(x)$. 
Nonetheless, such an error will not affect the output of the algorithm.
Indeed, if we denote by $\widetilde{\facemap'}$ the face map computed by the algorithm, we have that $\facemap'(x) \subseteq \widetilde{\facemap'}(x)$ for all $x \in \S_{m-1}$. As a consequence of Lemma \ref{lem:weak_star}, $\widetilde{\facemap'}$ satisfies the weak star condition if $\facemap'$ does, and Equation \eqref{eq:face_map_weakstarcondition} shows that every weak simplicial approximations for $\facemap'$ are weak simplicial approximations for $\widetilde{\facemap'}$.
Since every weak simplicial approximations are homotopic, we obtain that the induced maps in cohomology are equal, therefore the output of the algorithm is unchanged.

\subsection{Vietoris-Rips version of the \v{C}ech bundle filtration}
\label{subsec:Rips}
We still consider a subset $X \subset \R^n \times \matrixspace{\R^m}$. Denote by $\X$ the corresponding \v{C}ech set filtration, and by $\S = (S^t)_{t\geq0}$ the simplicial \v{C}ech filtration. 
For every $t \geq 0$, let $R^t$ be the flag complex of $S^t$, i.e. the clique complex of the 1-skeleton $\skeleton{(S^t)}{1}$ of $S^t$. It is known as the \emph{Vietoris-Rips complex} of $X$ at time $t$.
The collection $\R = (R^t)_{t \geq 0}$ is called the \emph{Vietoris-Rips filtration} of $X$.
The simplicial filtrations $\S$ and $\R$ are multiplicatively $\sqrt{2}$-interleaved \cite[Theorem 3.1]{bell2017weighted}. In other words, for every $t \geq 0$, we have 
\begin{align*}
S^t \subseteq R^t \subseteq S^{\sqrt{2} t}.
\end{align*}

Let $\gamma>0$ and consider the \v{C}ech bundle filtration $(\X, \p)$ of $X$. Suppose that its maximal filtration value $\tmaxgamma{X}$ is positive.
Let $\topreal{\R} = (\topreal{R^t})_{t\geq 0}$ denote the geometric realization of the Vietoris-Rips filtration. 
We can give $\topreal{\R}$ a vector bundle filtration structure with $(p')^t \colon \topreal{R^t} \rightarrow \Grass{d}{\R^m}$ defined as
\begin{equation*}
(p')^t = p^{\sqrt{2}t} \circ i^t,
\end{equation*}
where $p^{\sqrt{2}t}$ denotes the maps of the \v{C}ech bundle filtration $(\X, \p)$, and $i^t$ denotes the inclusion $\topreal{R^t} \hookrightarrow \topreal{S^{\sqrt{2}t}}$. These maps fit in the following diagram:
\begin{equation*}
\begin{tikzcd}
\topreal{R^t}  \arrow[r, hook, "i^t"] \arrow[drr, "(p')^t", dashed, swap]
&[0em] \topreal{S^{\sqrt{2} t}} \arrow[r, equal, shorten <= 1em, shorten >= 1em]
& X^{\sqrt{2} t} \arrow[d, "p^{\sqrt{2} t}"]  \\
&& \Grass{d}{\R^m}
\end{tikzcd}
\end{equation*}
\noindent
This new vector bundle filtration is defined on the index set $T' = \left[0, \frac{1}{\sqrt{2}}\tmaxgamma{X}\right)$.

It is clear from the construction that the vector bundle filtrations $(\X, \p)$ and $(\topreal{\R}, \p')$ are multiplicatively $\sqrt{2}$-interleaved, with an interleaving that preserves the persistent Stiefel-Whitney classes. This property is a multiplicative equivalent of Theorem \ref{thm:stability}.
Remember that if $X$ is a subset of $\R^n \times \Grass{d}{\R^m}$, then the maximal filtration value of the \v{C}ech bundle filtration on $X$ is $\tmaxgamma{X} = \frac{\sqrt{2}}{2} \gamma$ (see Equation \eqref{eq:tmax_subset_grass}). Consequently, the maximal filtration value of its Vietoris-Rips version is $\frac{1}{2}\gamma$.

From an application perspective, we choose to work with the Vietoris-Rips filtration since it is easier to compute. Indeed, its construction only relies on computing pairwise distances, and finding cliques in graphs.

\subsection{Choice of the parameter $\gamma$}
\label{subsec:choice_of_gamma}
This subsection is devoted to discussing the influence of the parameter $\gamma>0$.
Recall that $\gamma$ affects the norm $\gammaN{\cdot}$ we chose on $\R^n \times \matrixspace{\R^m}$:
\begin{align*}
\gammaN{(x,A)}^2 = \eucN{x}^2 + \gamma^2 \frobN{A}^2.
\end{align*}
Let $X \subset \R^n \times \matrixspace{\R^m}$.
If $\gamma_1 \leq \gamma_2$ are two positive real numbers, the corresponding \v{C}ech filtrations $\X_1$ and $\X_2$, as well as the \v{C}ech bundle filtrations $(\X_1, \p_1)$ and $(\X_2, \p_2)$, are $\frac{\gamma_2}{\gamma_1}$-interleaved multiplicatively.
This comes from the straightforward inequality
\begin{align*}
\|\cdot\|_{\gamma_1} 
~\leq~ \|\cdot\|_{\gamma_2} 
~\leq~ \frac{\gamma_2}{\gamma_1} \|\cdot\|_{\gamma_1}.
\end{align*}
Note that we also have the additive inequality 
\begin{align*}
\| (x,A) \|_{\gamma_1} 
~\leq~ \| (x,A) \|_{\gamma_2} 
~\leq~  \| (x,A) \|_{\gamma_1} + \sqrt{\gamma_2^2-\gamma_1^2} \frobN{A}.
\end{align*}
One deduces that the \v{C}ech bundle filtrations $(\X_1, \p_1)$ and $(\X_2, \p_2)$ are $\sqrt{\gamma_2^2 - \gamma_1^2} \cdot \tmax{X}$-interleaved additively, where $\tmax{X}$ is the maximal filtration value when $\gamma=1$.
As a consequence of these interleavings, when the values $\gamma_1$ and $\gamma_2$ are close, the persistence barcodes and the lifebars of the persistent Stiefel-Whitney classes are close (see Theorem \ref{thm:stability}).

As a general principle, one would choose the parameter $\gamma$ to be large, since it would lead to large filtrations. More precisely, if $t_{\gamma_1}^{\mathrm{max}}\left( X \right)$ and $t_{\gamma_2}^{\mathrm{max}}\left( X \right)$ denote respectively the maximal filtration values of $(\X_1,\p_1)$ and $(\X_2,\p_2)$, then $t_{\gamma_1}^{\mathrm{max}}\left( X \right) = \gamma_1 \cdot \tmax{X}$ and $t_{\gamma_2}^{\mathrm{max}}\left( X \right) = \gamma_2 \cdot \tmax{X}$, as in Equation \eqref{eq:tmax}. Moreover, we have the following inclusion:
\begin{align*}
X_1^{t_{\gamma_1}^{\mathrm{max}}\left( X \right)}
\subseteq X_2^{t_{\gamma_2}^{\mathrm{max}}\left( X \right)},
\end{align*}
where $X_1^{t_{\gamma_1}^{\mathrm{max}}\left( X \right)}$ denotes the thickening of $X$ with respect to the norm $\|\cdot\|_{\gamma_1}$, and $X_2^{t_{\gamma_2}^{\mathrm{max}}\left( X \right)}$ with respect to $\|\cdot\|_{\gamma_2}$.
This inclusion can be proven from the following fact, valid for every $x \in \R^n$ and $A \in \matrixspace{\R^m}$ such that $\frobN{A} \leq \tmax{X}$:
\begin{align*}
\| (x,A) \|_{\gamma_1} \leq t_{\gamma_1}^{\mathrm{max}}\left( X \right) 
\implies
\| (x,A) \|_{\gamma_2} \leq t_{\gamma_2}^{\mathrm{max}}\left( X \right).
\end{align*}

\noindent
Hence larger parameters $\gamma$ lead to larger maximal filtration values and larger filtrations.

However, as we show in the following examples, different values of $\gamma$ may result in different behaviours of the persistent Stiefel-Whitney classes.
In Example \ref{ex:gamma_normal}, large values of $\gamma$ highlight properties of the dataset that are not consistent with the underlying vector bundle, which is orientable. 
Notice that, so far, we always picked the value $\gamma = 1$, for it seemed experimentally relevant with the datasets we chose.

\begin{example}
\label{ex:gamma_mobius}
Consider the set $Y \subset \R^2 \times \matrixspace{\R^2}$ representing the Mobius strip, as in Example \ref{ex:normal_mobius} of Subsect. \ref{subsec:filtered_cech_bundle}:
\begin{align*}
&Y = \bigg\{
\bigg( 
\begin{pmatrix}
\cos(\theta)  \\
\sin(\theta) 
\end{pmatrix}
,
\begin{pmatrix}
\cos(\frac{\theta}{2})^2 & \cos(\frac{\theta}{2}) \sin(\frac{\theta}{2}) \\
\cos(\frac{\theta}{2}) \sin(\frac{\theta}{2}) & \sin(\frac{\theta}{2})^2 
\end{pmatrix}
\bigg),
\theta \in [0, 2\pi)  \bigg\}.
\end{align*}
As we show in Appendix \ref{subsec:gamma_mobius}, $Y$ is a circle, included in a 2-dimensional affine subspace of $\R^2 \times \matrixspace{\R^2}$.
Its radius is $\sqrt{1 + \frac{\gamma^2}{2}}$.
As a consequence, the persistence of the \v{C}ech filtration of $Y$ consists of two bars: one $H^0$-feature, the bar $[0, +\infty)$, and one $H^1$-feature, the bar $\left[0, \sqrt{1 + \frac{\gamma^2}{2}}\right)$.

For any $\gamma>0$, the maximal filtration value of the \v{C}ech bundle filtration of $Y$ is $\tmaxgamma{Y} = \frac{\sqrt{2}}{2} \gamma$.
Moreover, the persistent Stiefel-Whitney class $w_1^t(Y)$ is nonzero all along the filtration.
In this example, we see that the parameter $\gamma$ does not influence the qualitative interpretation of the persistent Stiefel-Whitney class. It is always nonzero where it is defined.
The following example shows a case where $\gamma$ does influence the persistent Stiefel-Whitney class.
\end{example}

\begin{example}
\label{ex:gamma_normal}
Consider the set $X \subset \R^2 \times \matrixspace{\R^2}$ representing the normal bundle of the circle $\S_1$, as in Example \ref{ex:normal_mobius}:
\begin{align*}
&X = \bigg\{
\bigg( 
\begin{pmatrix}
\cos(\theta)  \\
\sin(\theta) 
\end{pmatrix}
,
\begin{pmatrix}
\cos(\theta)^2 & \cos(\theta) \sin(\theta) \\
\cos(\theta) \sin(\theta) & \sin(\theta)^2 
\end{pmatrix}
\bigg),
\theta \in [0, 2\pi)  \bigg\}.
\end{align*}
As we show in Appendix \ref{subsec:gamma_normal}, $X$ is a subset of a 2-dimensional flat torus embedded in $\R^2 \times \matrixspace{\R^2}$, hence can be seen as a torus knot.

Before studying the \v{C}ech bundle filtration of $X$, we discuss its \v{C}ech filtration $\X$. Its behaviour depends on $\gamma$:
\begin{itemize}
\item if $\gamma \leq \frac{\sqrt{2}}{2}$, then $X^t$ retracts on a circle for $t \in [0,1)$, $X^t$ retracts on a 3-sphere for $t \in \left [ 1, \sqrt{1+\frac{1}{2}\gamma^2} \right)$, and $X^t$ retracts on a point for $t \geq \sqrt{1+\frac{1}{2}\gamma^2}$.
\item if $\gamma \geq \frac{\sqrt{2}}{2}$, then $X^t$ retracts on a circle for $t \in [0,1)$, $X^t$ retracts on another circle for $t \in \left[1,\frac{\sqrt{2} }{2}\sqrt{1 + \gamma^2 + \frac{1}{4 \gamma^2}}\right)$, $X^t$ retracts on a 3-sphere for $t \in \left[ \frac{\sqrt{2} }{2}\sqrt{1 + \gamma^2 + \frac{1}{4 \gamma^2}}, \sqrt{1+\frac{1}{2}\gamma^2}\right)$, and $X^t$ has the homotopy type of a point for $t \geq \sqrt{1+\frac{1}{2}\gamma^2}$.
\end{itemize}
\noindent
Let us interpret these facts. If  $\gamma \leq \frac{\sqrt{2}}{2}$, then the persistent cohomology of $X$ looks similar to the persistent cohomology of the underlying set $\left\{\begin{psmallmatrix} \cos(\theta) \\ \sin(\theta)  \end{psmallmatrix}, \theta \in [0, 2\pi)\right\} \subset \R^2$, but with a $H^3$ cohomology feature added. 
Besides, if  $\gamma \geq \frac{\sqrt{2}}{2}$, a new topological feature appears in the $H^1$-barcode: the bar $\left[1,\sqrt{2} \sqrt{1 + \gamma^2 + \frac{1}{4 \gamma^2}}\right)$.
These barcodes are depicted in Figures \ref{fig:15} and \ref{fig:16}.

Let us now discuss the corresponding \v{C}ech bundle filtrations.
For any $\gamma>0$, the maximal filtration value of the \v{C}ech bundle filtration of $X$ is $\tmaxgamma{X} = \frac{\sqrt{2}}{2} \gamma$.
We observe two behaviours: if $\gamma \leq \frac{\sqrt{2}}{2}$, then $w_1^t(X)$ is zero all along the filtration, and if $\gamma > \frac{\sqrt{2}}{2}$, then $w_1^t(X)$ is nonzero from $t^\dagger = 1$.
This in proven in Appendix \ref{subsec:gamma_normal}.
To conclude, this persistent Stiefel-Whitney class is consistent with the underlying bundle---the normal bundle of the circle, which is trivial---only for $t\leq 1$.

\begin{figure}[H]
\centering
\begin{minipage}{.49\linewidth}
\centering
\includegraphics[width=.9\linewidth]{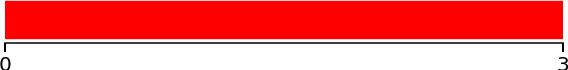}\vspace{.4cm}
\includegraphics[width=.9\linewidth]{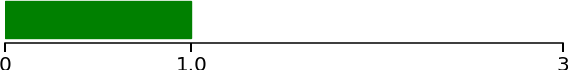}\vspace{.4cm}
\includegraphics[width=.9\linewidth]{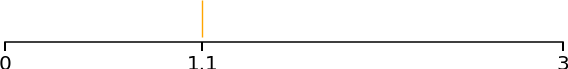}\vspace{.4cm}
\includegraphics[width=.9\linewidth]{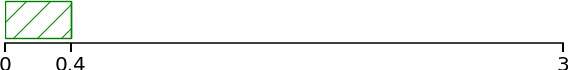}
\end{minipage}
\begin{minipage}{.49\linewidth}
\centering
\includegraphics[width=.9\linewidth]{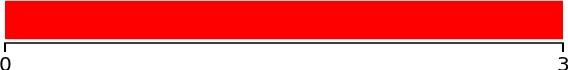}\vspace{.4cm}
\includegraphics[width=.9\linewidth]{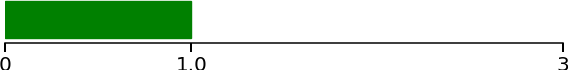}\vspace{.4cm}
\includegraphics[width=.9\linewidth]{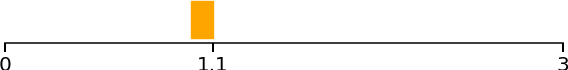}\vspace{.4cm}
\includegraphics[width=.9\linewidth]{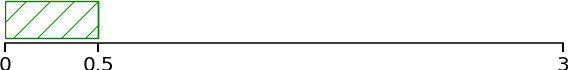}
\end{minipage}
\caption{$H^0$-, $H^1$-, $H^3$-barcodes and lifebar of the first persistent Stiefel-Whitney class of $X$ with $\gamma=\frac{1}{2}$ (left) and $\gamma=\frac{\sqrt{2}}{2}$ (right).}
\label{fig:15}
\end{figure}

\begin{figure}[H]
\centering
\begin{minipage}{.49\linewidth}
\centering
\includegraphics[width=.9\linewidth]{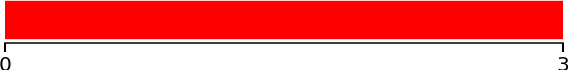}\vspace{.4cm}
\includegraphics[width=.9\linewidth]{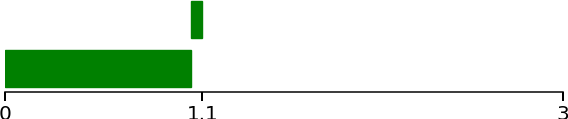}\vspace{.4cm}
\includegraphics[width=.9\linewidth]{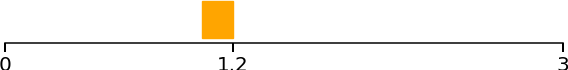}\vspace{.4cm}
\includegraphics[width=.9\linewidth]{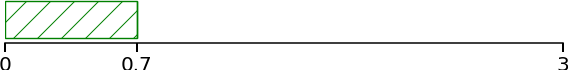}
\end{minipage}
\begin{minipage}{.49\linewidth}
\centering
\includegraphics[width=.9\linewidth]{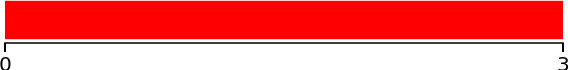}\vspace{.4cm}
\includegraphics[width=.9\linewidth]{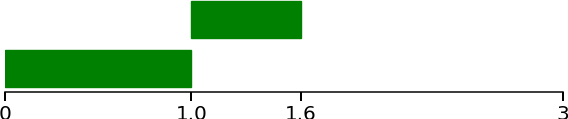}\vspace{.4cm}
\includegraphics[width=.9\linewidth]{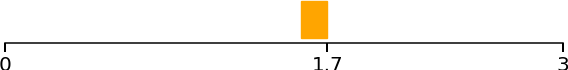}\vspace{.4cm}
\includegraphics[width=.9\linewidth]{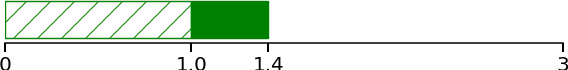}
\end{minipage}
\caption{$H^0$-, $H^1$-, $H^3$-barcodes and lifebar of the first persistent Stiefel-Whitney class of $X$ with $\gamma=1$ (left) and $\gamma=2$ (right).}
\label{fig:16}
\end{figure}
\end{example}

\section{Numerical experiments}
\label{sec:experiments}
In this last section, we propose to apply our algorithm on synthetic datasets, inspired by image analysis.
We aim at illustrating how the first persistent Stiefel-Whintey class may reveal two properties: when the datasets contain certain symmetries, and when the datasets are close to non-orientables vector bundles.
A Python notebook gathering these experiments can be found at \url{https://github.com/raphaeltinarrage/PersistentCharacteristicClasses/blob/master/Experiments.ipynb}.

\subsection{Datasets with symmetries}
\paragraph{Giraffe moving forward.}
Let us start with a simple dataset: it consists in a picture of a giraffe, that we translate to the right via cyclic permutations. 
The dataset contains of 150 images, each of size $150\times300$ pixels, in RGB format. 
Since $150\times300\times3 = 135\,000$, the dataset can be enbedded in $\R^{135\,000}$.
Some of these images can be seen in Figure \ref{fig:22}.


\begin{figure}[H]
\centering
\begin{minipage}{.3\linewidth}
\centering
\includegraphics[width=1\linewidth]{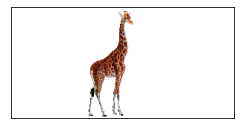}
\end{minipage}
~~~~
\begin{minipage}{.3\linewidth}
\centering
\includegraphics[width=1\linewidth]{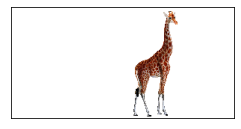}
\end{minipage}
~~~~
\begin{minipage}{.3\linewidth}
\centering
\includegraphics[width=1\linewidth]{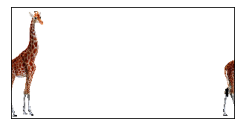}
\end{minipage}
\caption{The dataset consists in a giraffe moving forward.}
\label{fig:22}
\end{figure}

By performing a Principal Component Analysis (PCA), we project the dataset on the three principal axes. The result is a subset $X = \{x_1, ..., x_{150}\}$ of $\R^{3}$. As a last pre-processing step, we divide each point of $X$ by the value $\max\{\eucN{x}, ~x \in X\}$, so that $X$ becomes a subset of the unit ball $\closedball{0}{1} \subset \R^3$.
The point cloud $X$ is represented on Figure \ref{fig:23}. 
Next to it, we give the persistence barcodes of its Rips filtration (we choose the coefficient field $\Zd$, and represent $H^0$ in red and $H^1$ in green).
Note that $X$ actually lies close to the unit sphere $\S_2$ of $\R^3$, and moreover that it describes a circle. 

\begin{figure}[H]
\centering
\begin{minipage}{.49\linewidth}
\centering
\includegraphics[width=.7\linewidth]{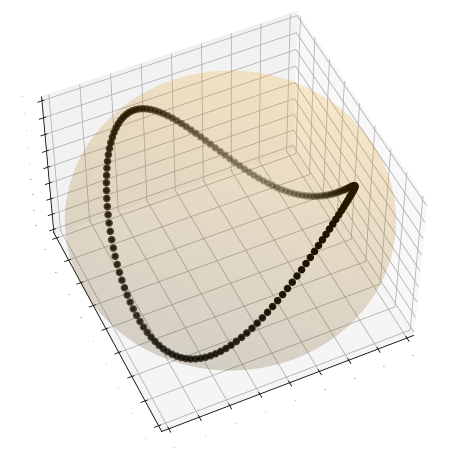}
\end{minipage}
\begin{minipage}{.49\linewidth}
\centering
\includegraphics[width=1\linewidth]{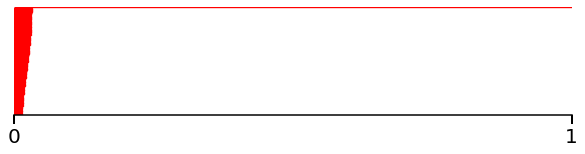}\\\includegraphics[width=1\linewidth]{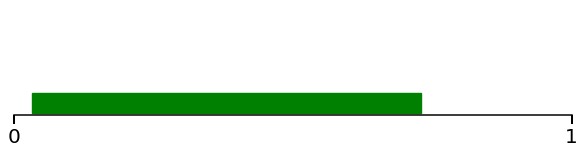}
\end{minipage}
\caption{\textbf{Left:} The point cloud $X \subset \R^3$.
\textbf{Right:} The barcode of its Rips filtration.}
\label{fig:23}
\end{figure}

Let us consider on $X$ two $1$-dimensional bundles, defined via classifying maps $X \rightarrow \Grass{1}{\R^3} \subset \matrixspace{\R^m}$: 
\begin{center}
$p\colon x_i \mapsto P(x_i)$
~~~~~~~~~~
and
~~~~~~~~~~
$p'\colon x_i \mapsto P(x_{i+1}-x_{i-1})$
\end{center}
where $P(x)$ represents the orthogonal projection matrix on the 1-dimensional subspace of $\R^3$ spanned by $x$. 
The vector bundle $p$ is to be seen as the normal bundle of $\S_2$ restricted to $X$, and $p'$ is to be seen as the tangent bundle of the circle.
These two theoretical bundles --- restriction of the normal bundle of the sphere, and tangent bundle of a circle --- are trivial. 
This follows from the general fact that 1-dimensional bundles on the sphere are trivial. 

We represent on Figure \ref{fig:24-1} the vector bundles $p$ and $p'$, seen in $\R^3$.
One observes that, while going around the circle, the lines make a complete twist.
This is the same behavior as the trivial bundle of the circle, that we studied earlier (see Figure \ref{fig:2}).

\begin{figure}[H]
\centering
\begin{minipage}{.49\linewidth}
\centering
\includegraphics[width=.75\linewidth]{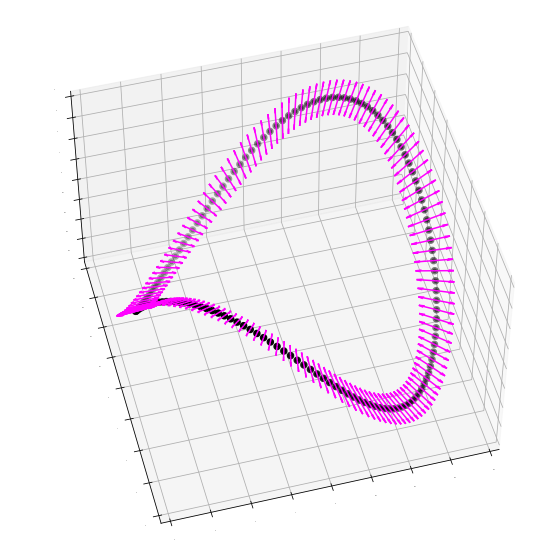}
\end{minipage}
\begin{minipage}{.49\linewidth}
\centering
\includegraphics[width=.75\linewidth]{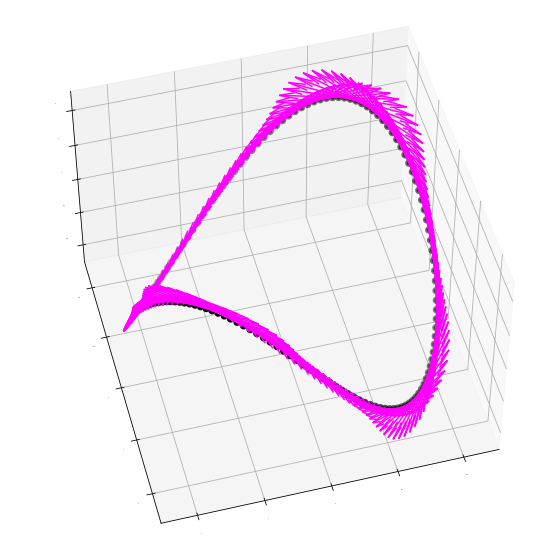}
\end{minipage}
\caption{The vector bundles $p$ and $p'$.}
\label{fig:24-1}
\end{figure}

Next, let $\gamma = 1$, and consider the lifted sets $\check X = \left\{\left(x,~\gamma p(x)\right), ~x\in X\right\}$ and $\check X' = \left\{\left(x,~\gamma p'(x)\right), ~x\in X\right\}$.
We remind the reader that this construction has been studied in Subsection \ref{subsec:consistency}.
We represent the point clouds $\check X$ and $\check X'$ on Figure \ref{fig:24-2}, projected in $\R^3$ via PCA, as well as the persistence barcodes of their Rips filtration. On these two barcodes, one observes one prominent $H^0$-feature, and one prominent $H^1$-feature. Below, we plot the lifebars of their first persistent Stiefel-Whitney classes, up to the maximal filtration value $\frac{1}{2}\gamma$ (see Subsection \ref{subsec:Rips}). Both are empty, meaning that the persistent Stiefel-Whitney classes are zero all along the filtration. This is consistent with the fact that the underlying vector bundles are trivial.

\begin{figure}[H]
\centering
\begin{minipage}{.49\linewidth}
\centering
\includegraphics[width=.7\linewidth]{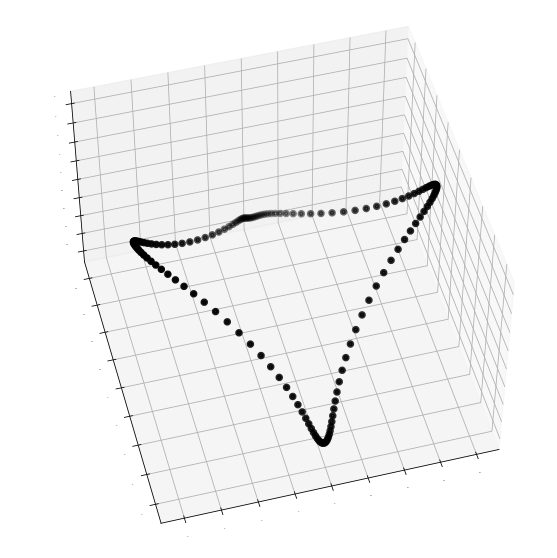}\\
\includegraphics[width=1\linewidth]{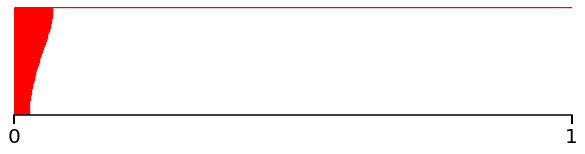}\\
\includegraphics[width=1\linewidth]{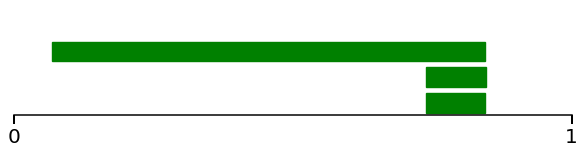}\\
\includegraphics[width=1\linewidth]{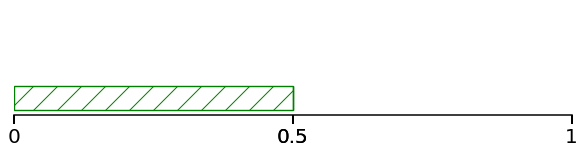}
\end{minipage}
\begin{minipage}{.49\linewidth}
\centering
\includegraphics[width=.7\linewidth]{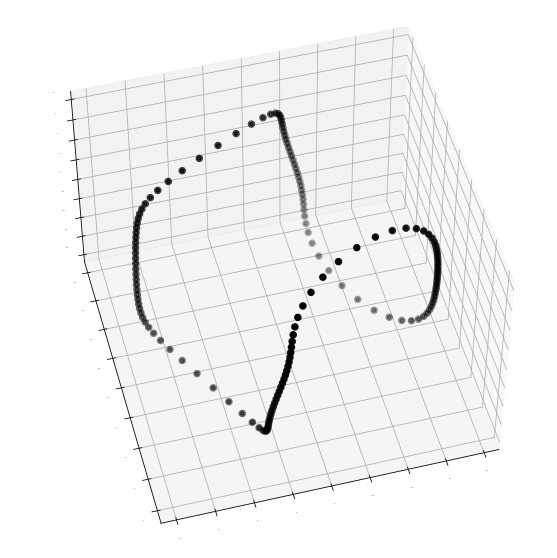}\\
\includegraphics[width=1\linewidth]{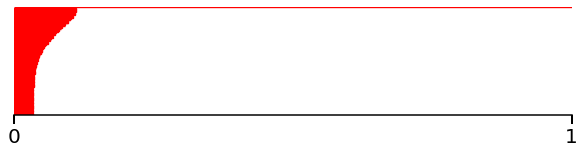}\\
\includegraphics[width=1\linewidth]{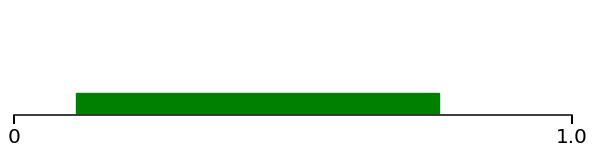}\\
\includegraphics[width=1\linewidth]{Fig24.png}
\end{minipage}
\caption{\textbf{Left:} The set $\check X$ (projected in $\R^3$ via PCA), the barcodes of its Rips filtration, and the lifebar of its first persistent Stiefel-Witney class. 
\textbf{Right:} Same for $\check X'$.
Only green bars of length larger than $0.1$ are represented.}
\label{fig:24-2}
\end{figure}



\paragraph{Giraffe moving behind two trees.}
We will now study a variation of this dataset.
As before, we consider a giraffe walking straight, but now with two trees on the foreground. 
The dataset consists of 150 images, each of size $130\times300$ pixels, in RGB format. 
Since $130\times300\times3 = 117\,000$, the dataset can be seen as a subset of $\R^{117\,000}$.
Some of the images can be seen in Figure \ref{fig:25}.

\begin{figure}[H]
\centering
\begin{minipage}{.3\linewidth}
\centering
\includegraphics[width=1\linewidth]{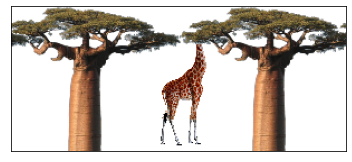}
\end{minipage}
~~~~
\begin{minipage}{.3\linewidth}
\centering
\includegraphics[width=1\linewidth]{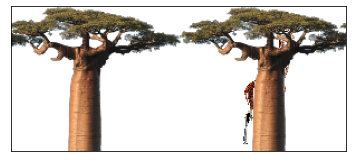}
\end{minipage}
~~~~
\begin{minipage}{.3\linewidth}
\centering
\includegraphics[width=1\linewidth]{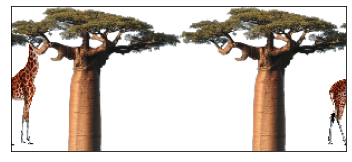}
\end{minipage}
\caption{The dataset consists in a giraffe moving forward, with two trees on the foreground.}
\label{fig:25}
\end{figure}

Just as before, we project the point cloud in $\R^3$ via PCA, and renormalize it.
The corresponding point cloud, that we denote $Y = \{y_1, ..., y_{150}\}$, and the barcode of its Rips filtration can be seen on Figure \ref{fig:26}.

\begin{figure}[H]
\centering
\begin{minipage}{.49\linewidth}
\centering
\includegraphics[width=.7\linewidth]{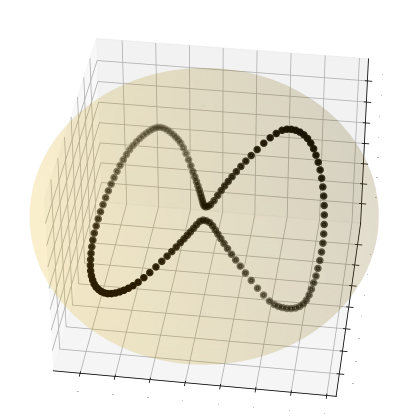}
\end{minipage}
\begin{minipage}{.49\linewidth}
\centering
\includegraphics[width=1\linewidth]{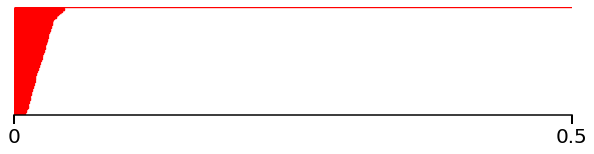}\\\includegraphics[width=1\linewidth]{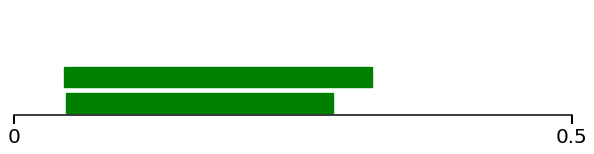}
\end{minipage}
\caption{\textbf{Left:} The point cloud $Y \subset \R^3$.
\textbf{Right:} The barcode of its Rips filtration.
Only green bars of length larger than $0.05$ are represented.}
\label{fig:26}
\end{figure}

Note that, in this collection, the images where the giraffe goes behind the trees are close to each other. Indeed, only a few pixels differ between them. As a consequence, the point cloud $Y$, though lying close to a circle, seems to come closer to itself at some point.
This behavior translates in its persistent cohomology as two prominent $H^1$-features.

We now consider the vector bundles $q$ and $q'$ defined as before:
\begin{center}
$q\colon y_i \mapsto P(y_i)$
~~~~~~~~~~
and
~~~~~~~~~~
$q'\colon y_i \mapsto P(y_{i+1}-y_{i-1})$
\end{center}
They are represented on Figure \ref{fig:27}.
Let $\gamma = 0.5$, and consider the lifted sets $\check Y = \left\{\left(y,~\gamma q(y)\right), ~y\in Y\right\}$ and $\check Y' = \left\{\left(y,~\gamma q'(y)\right), ~y\in Y\right\} \subset \R^3 \times \Grass{1}{\R^3}$. 
We represent them on Figure \ref{fig:28}, as well as the persistence barcode of their Rips filtration, and the lifebars of their first Stiefel-Whitney classes, up to their maximal filtration value. We read that the second one is empty, while the first one is not.

\begin{figure}[H]
\centering
\begin{minipage}{.49\linewidth}
\centering
\includegraphics[width=.75\linewidth]{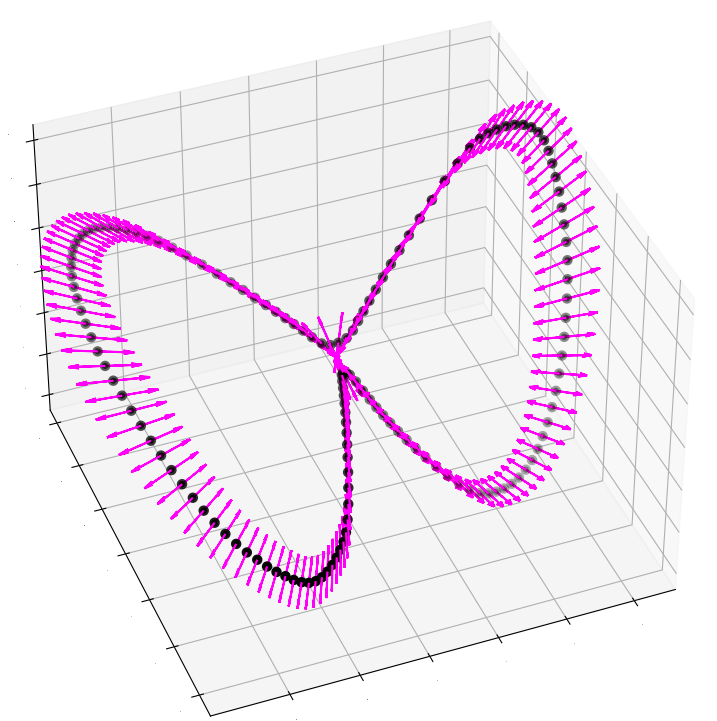}
\end{minipage}
\begin{minipage}{.49\linewidth}
\centering
\includegraphics[width=.75\linewidth]{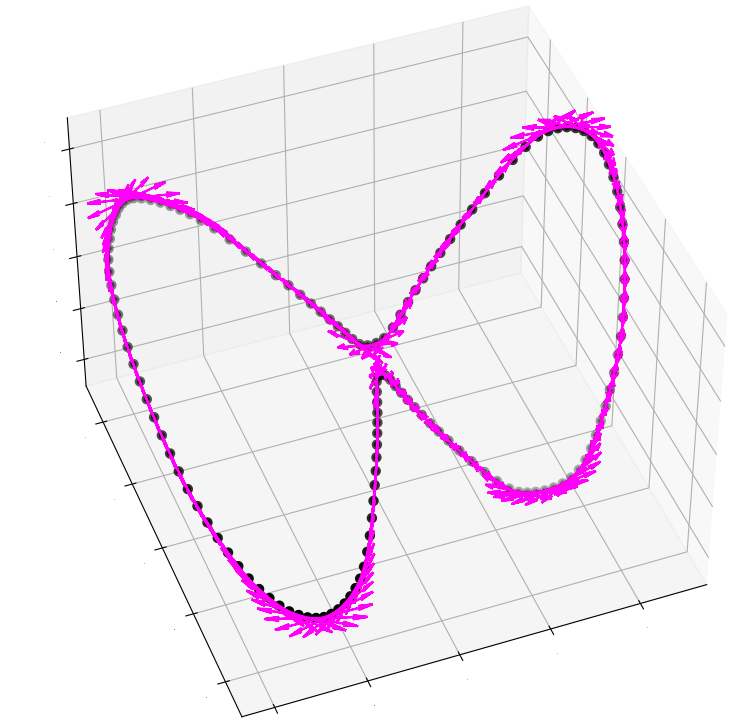}
\end{minipage}
\caption{The vector bundles $q$ and $q'$.}
\label{fig:27}
\end{figure}

\begin{figure}[H]
\centering
\begin{minipage}{.49\linewidth}
\centering
\includegraphics[width=.7\linewidth]{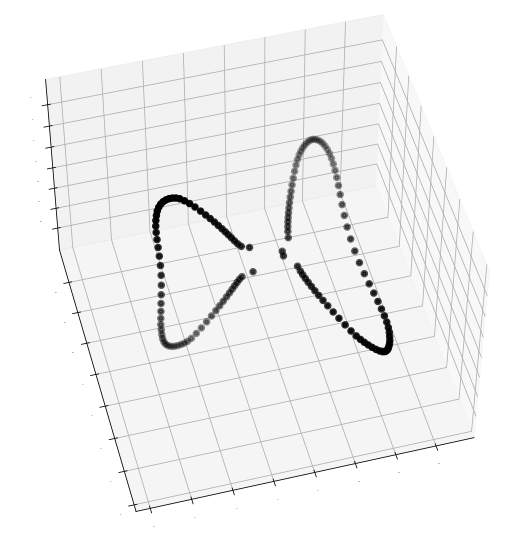}\\
\includegraphics[width=1\linewidth]{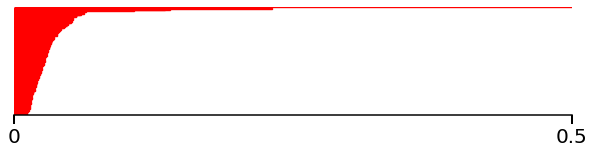}\\
\includegraphics[width=1\linewidth]{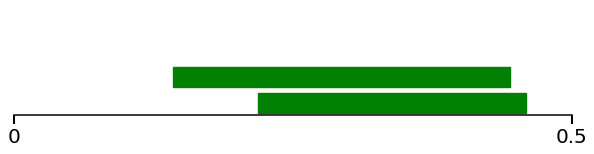}\\
\includegraphics[width=1\linewidth]{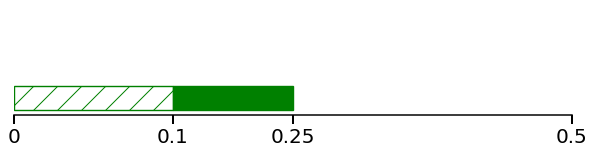}
\end{minipage}
\begin{minipage}{.49\linewidth}
\centering
\includegraphics[width=.7\linewidth]{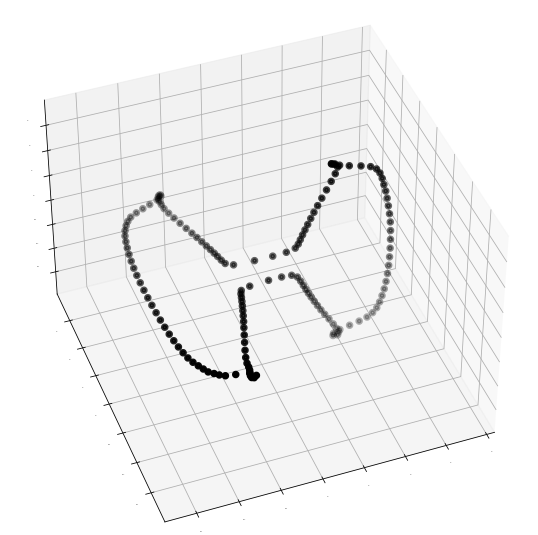}\\
\includegraphics[width=1\linewidth]{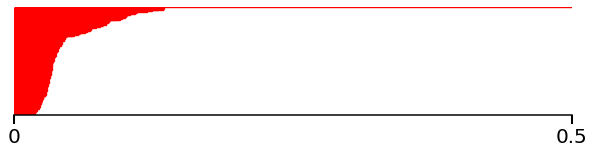}\\
\includegraphics[width=1\linewidth]{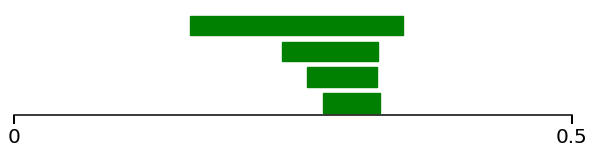}\\
\includegraphics[width=1\linewidth]{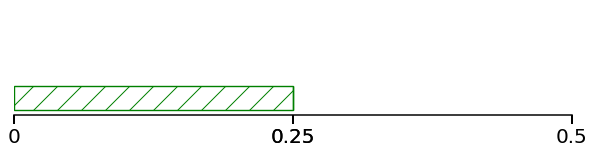}
\end{minipage}
\caption{\textbf{Left:} The set $\check Y$ (projected in $\R^3$ via PCA), the barcodes of its Rips filtration, and the lifebar of its first persistent Stiefel-Witney class.
\textbf{Right:} Same for $\check Y'$.
Only green bars of length larger than $0.05$ are represented.}
\label{fig:28}
\end{figure}

Let us explain this phenomenon.
In both cases, as we can see on Figure \ref{fig:27}, the lines make a full twist while turning around the circle. This corresponds to a trivial bundle. However, in the case of $q$, the points $x$ close to the almost self-intersection of $Y$ corresponds to lines $q(x)$ close to each other.
As a consequence, in the Rips filtration of the lifted set $\check Y$, these points will connect early. Hence the filtration behaves as if $Y$ were composed of two loops. But, on these loops, the lines make only a half-twist. This correspond to the Mobius bundle on the circle (see Figure \ref{fig:2}). Therefore we obtain a non-trivial Stifel-Whitney class.

The bundle $q'$ does not reflect this property. This is because the points $x$ close to the self-intersection of $Y$ does not correspond to lines $q'(x)$ that are close to each other. In the Rips filtration of the lifted set $\check Y'$, the two loops connect late, hence the non-orientability does not appear on its persistent Stiefel-Whitney class. 


\paragraph{Rotating cylinders.}
We now propose a dataset whose tangent bundle reflects non-orientability.
Consider the union of two cylinders in $\R^3$, as represented on Figure \ref{fig:29}. By applying rotations, we obtain a dataset of 100 pictures, in RGB format, of $500 \times 500$ pixels. 

\begin{figure}[H]
\centering
\begin{minipage}{.19\linewidth}
\centering
\includegraphics[width=1\linewidth]{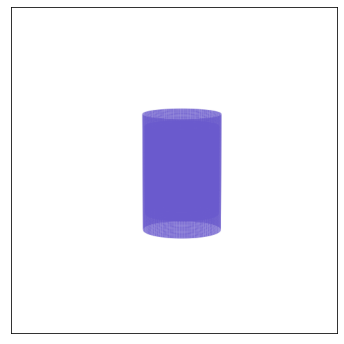}
\end{minipage}
\begin{minipage}{.19\linewidth}
\centering
\includegraphics[width=1\linewidth]{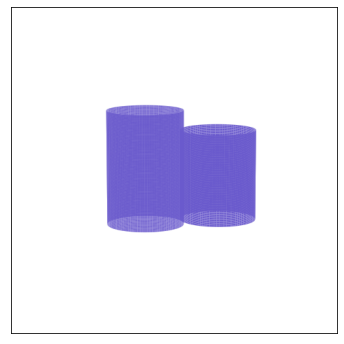}
\end{minipage}
\begin{minipage}{.19\linewidth}
\centering
\includegraphics[width=1\linewidth]{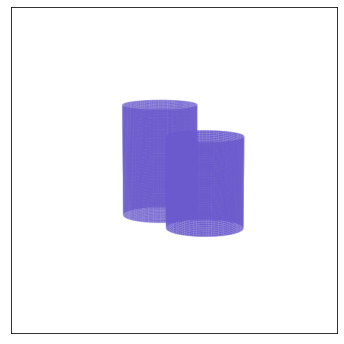}
\end{minipage}
\begin{minipage}{.19\linewidth}
\centering
\includegraphics[width=1\linewidth]{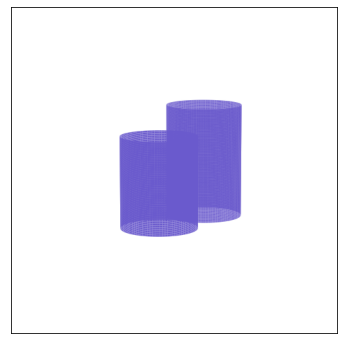}
\end{minipage}
\begin{minipage}{.19\linewidth}
\centering
\includegraphics[width=1\linewidth]{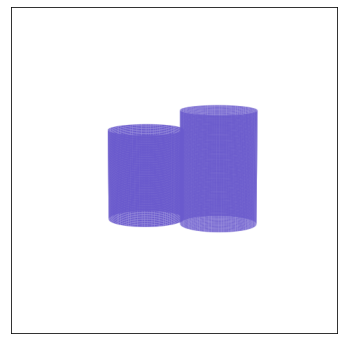}
\end{minipage}
\caption{The dataset consists in different views of two cylinders in $\R^3$.}
\label{fig:29}
\end{figure}

As before, we embed them into $\R^{750\,000}$, project them into $\R^3$ via PCA, and renormalize them. The corresponding point cloud, denoted $Z = \{z_1, ..., z_{100}\}$, and the barcodes of its Rips filtration are represented on Figure \ref{fig:30}.

\begin{figure}[H]
\centering
\begin{minipage}{.49\linewidth}
\centering
\includegraphics[width=.7\linewidth]{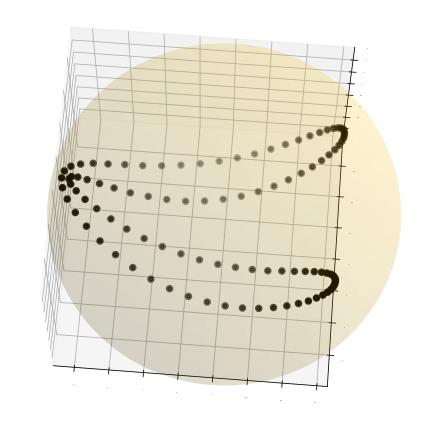}
\end{minipage}
\begin{minipage}{.49\linewidth}
\centering
\includegraphics[width=1\linewidth]{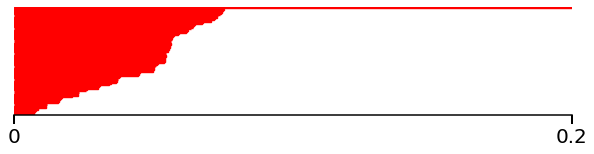}\\
\includegraphics[width=1\linewidth]{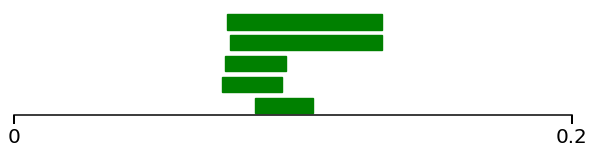}
\end{minipage}
\caption{\textbf{Left:} The point cloud $Z \subset \R^3$.
\textbf{Right:} The barcode of its Rips filtration.
Only green bars of length larger than $0.02$ are represented.}
\label{fig:30}
\end{figure}

Define on $Z$ the tangent bundle $r\colon z_i \mapsto P(z_{i+1}-z_{i-1}) \in \Grass{1}{\R^3}$. 
Let $\gamma = 0.2$ and consider the lifted set $\check Z = \left\{\left(z, \gamma r(z)\right), ~z \in Z\right\} \subset \R^3 \times \Grass{1}{\R^3}$.
The persistence barcodes of its Rips filtration, and the lifebar of its first persistent Stiefel-Whitney class are represented on Figure \ref{fig:31}. We see that the lifebar is not trivial.

\begin{figure}[H]
\centering
\begin{minipage}{.49\linewidth}
\centering
\includegraphics[width=1\linewidth]{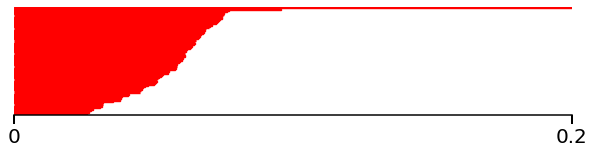}\\
\includegraphics[width=1\linewidth]{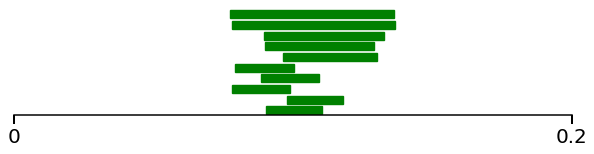}\\
\includegraphics[width=1\linewidth]{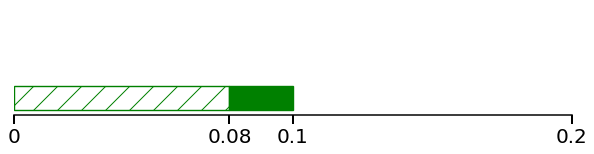}
\end{minipage}
\caption{
The barcodes of the Rips filtration of $\check Z$, and the lifebar of its first persistent Stiefel-Witney class.
Only green bars of length larger than $0.02$ are represented.}
\label{fig:31}
\end{figure}

Here, the same phenomenon as before occurs: there are two images of the collection which are almost equal (when the cylinders are one in front of the other), resulting in a point cloud that almost intersect itself.
Moreover, points $z$ close to the area where $Z$ almost intersect itself correspond to lines $r(z)$ that are close to each other. Consequently, the persistent homology of $\check Z$ shows two prominent loops, whose corresponding vector bundles are non-orientable.

In these two last experiments, we observed the following fact: trivial vector bundles, but whose underlying point clouds present self-similarity, or self-intersection, may result in a \emph{shortcut} of the vector bundle, implying a non-trivial persistent Stiefel-Whitney class.

\subsection{Datasets with intrisic (non-)orientability}
We will now present three datasets which reflect some underlying theoretical orientability or non-orientability.

\paragraph{Gorilla on a torus.}
Let us consider a picture of a gorilla, that we translate to the right and downwards via cyclic permutations (see Figure \ref{fig:32}). 
Each image has size $130\times120$ pixels, in RGB format.
The dataset consists of 3900 images (65 vertical permutations and 60 horizontal permutations).

Note that the images behave as if the gorilla was on a torus. Indeed, the torus can be obtained from the square $[0,1]^2$ by gluing its opposite edges.
Hence the various images can be seen as translations of the original image, whose gluing pattern follows the one of the torus.

Since $130\times120\times3 = 46\,800$, the dataset can be seen as a point cloud of $\R^{46\,800}$, that we project into $\R^4$ via PCA. The resulting subset is denoted $X = \left\{x_{i,j}, ~i \in \llbracket1,65\rrbracket, j \in \llbracket1,60\rrbracket\right\}$. The first index $i$ correponds to translations downwards, and the second index $j$ corresponds to translations to the right.

\begin{figure}[H]
\centering
\begin{minipage}{.19\linewidth}
\centering
\includegraphics[width=.8\linewidth]{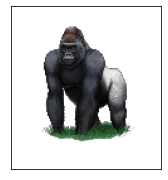}\\
\vspace{.1cm}
\includegraphics[width=.8\linewidth]{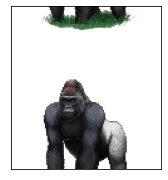}\\
\vspace{.1cm}
\includegraphics[width=.8\linewidth]{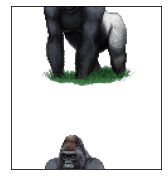}
\end{minipage}
\begin{minipage}{.19\linewidth}
\centering
\includegraphics[width=.8\linewidth]{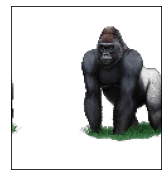}\\
\vspace{.1cm}
\includegraphics[width=.8\linewidth]{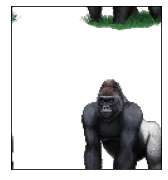}\\
\vspace{.1cm}
\includegraphics[width=.8\linewidth]{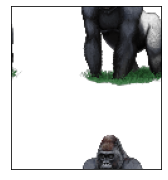}
\end{minipage}
\begin{minipage}{.19\linewidth}
\centering
\includegraphics[width=.8\linewidth]{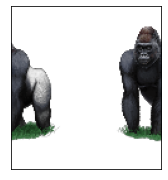}\\
\vspace{.1cm}
\includegraphics[width=.8\linewidth]{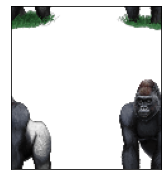}\\
\vspace{.1cm}
\includegraphics[width=.8\linewidth]{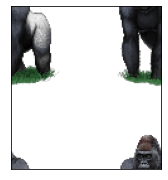}
\end{minipage}
\begin{minipage}{.19\linewidth}
\centering
\includegraphics[width=.8\linewidth]{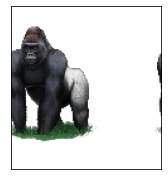}\\
\vspace{.1cm}
\includegraphics[width=.8\linewidth]{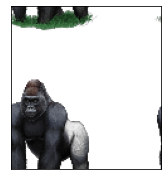}\\
\vspace{.1cm}
\includegraphics[width=.8\linewidth]{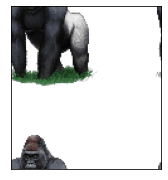}
\end{minipage}
\begin{minipage}{.19\linewidth}
\centering
\includegraphics[width=.8\linewidth]{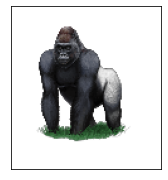}\\
\vspace{.1cm}
\includegraphics[width=.8\linewidth]{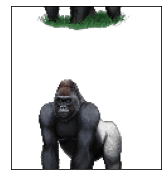}\\
\vspace{.1cm}
\includegraphics[width=.8\linewidth]{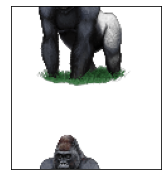}
\end{minipage}
\caption{The dataset consists in a gorilla moving forward and downwards on a torus.}
\label{fig:32}
\end{figure}

Consider on $X$ the \emph{vertical tangent bundle} and the \emph{horizontal tangent bundle}, defined respectively as 
\begin{center}
$p\colon x_{i,j} \mapsto P(x_{i-1, j}-x_{i+1, j})$
~~~~~~~~~~
and
~~~~~~~~~~
$p'\colon x_{i,j} \mapsto P(x_{i, j-1}-x_{i, j+1})$
\end{center}
where we recall that $P(x)$ is the orthogonal projection matrix on the line spanned by $x$.
The vector bundle $p$ is to be seen as the vertical component of the tangent bundle of a torus, and $p'$ as the horizontal one. Both are trivial bundles.

Now, let $\gamma = 0.3$, and consider the corresponding lifted sets $\check X = \left\{\left(x, \gamma p(x)\right), ~x \in X\right\}$ and $\check X' = \left\{\left(x, \gamma p'(x)\right), ~x \in X\right\} \subset \R^3 \times \Grass{1}{\R^3}$.
We represent on Figure \ref{fig:33} the barcodes of the Rips filtrations of the sets $\check X$ and $\check X'$, and the lifebars of their first persistent Stiefel-Whitney classes. We read that they are zero all along the filtration. This is consistent with the underlying line bundles on the torus being trivial.

\begin{figure}[H]
\centering
\begin{minipage}{.49\linewidth}
\centering
\includegraphics[width=1\linewidth]{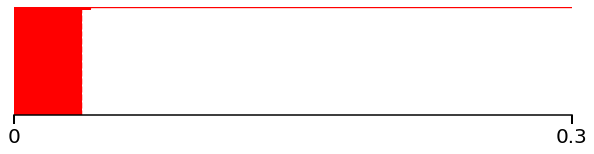}\\
\includegraphics[width=1\linewidth]{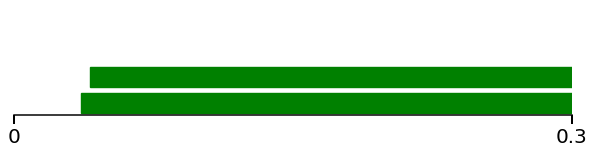}\\
\includegraphics[width=1\linewidth]{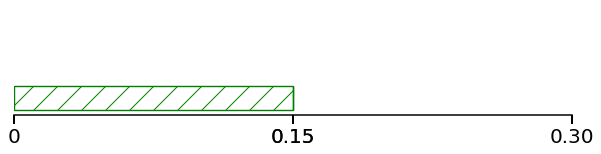}
\end{minipage}
\begin{minipage}{.49\linewidth}
\centering
\includegraphics[width=1\linewidth]{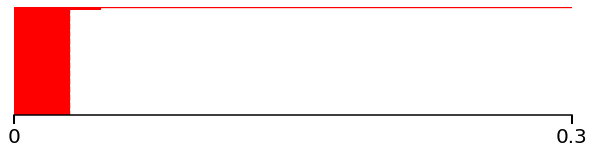}\\
\includegraphics[width=1\linewidth]{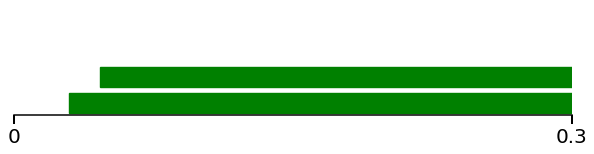}\\
\includegraphics[width=1\linewidth]{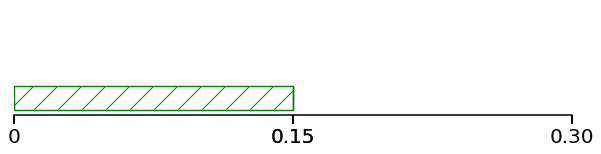}
\end{minipage}
\caption{
\textbf{Left:} The barcodes of the Rips filtration of $\check X$, and the lifebar of its first persistent Stiefel-Witney class.
\textbf{Right:} Same for $\check X'$.
Only green bars of length larger than $0.03$ are represented.}
\label{fig:33}
\end{figure}

\paragraph{Gorilla on a Klein bottle.}
We will modify this dataset: we still translate the gorilla, but while translating it to the right, we inverse the part that arrives at the left (see Figure \ref{fig:34}). It behaves as if the picture was glued to itself according to the gluing of a Klein bottle. 
Just as before, the dataset consists of 3900 images of size $130\times120$ pixels.
We embed the images into $\R^{46\,800}$ and project them into $\R^4$ via PCA. The resulting subset is denoted $Y = \{y_{i,j}, ~i \in \llbracket1,65\rrbracket, j \in \llbracket1,60\rrbracket\}$.

Again, consider the vector bundles $q\colon y_{i,j} \mapsto P(y_{i-1, j}-y_{i+1, j})$ and $q'\colon y_{i,j} \mapsto P(y_{i, j-1}-y_{i, j+1})$.
They correspond to the horizontal and vertical components of the tangent bundle of a Klein bottle. Only one of them is non-trivial.
Let $\gamma = 0.3$, and consider the corresponding lifted sets $\check Y$ and $\check Y'$.
We represent on Figure \ref{fig:35} the barcodes of their Rips filtrations, and the lifebars of their first persistent Stiefel-Whitney classes. The first lifebar is non-empty, reflecting the non-triviality of the underlying bundle.

\begin{figure}[H]
\centering
\begin{minipage}{.19\linewidth}
\centering
\includegraphics[width=.8\linewidth]{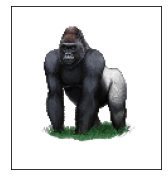}\\
\vspace{.1cm}
\includegraphics[width=.8\linewidth]{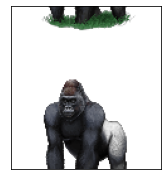}\\
\vspace{.1cm}
\includegraphics[width=.8\linewidth]{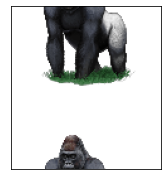}
\end{minipage}
\begin{minipage}{.19\linewidth}
\centering
\includegraphics[width=.8\linewidth]{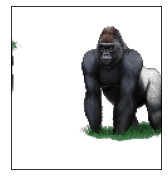}\\
\vspace{.1cm}
\includegraphics[width=.8\linewidth]{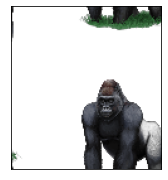}\\
\vspace{.1cm}
\includegraphics[width=.8\linewidth]{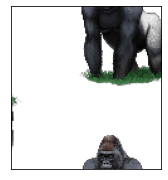}
\end{minipage}
\begin{minipage}{.19\linewidth}
\centering
\includegraphics[width=.8\linewidth]{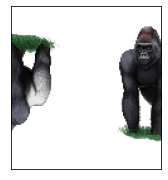}\\
\vspace{.1cm}
\includegraphics[width=.8\linewidth]{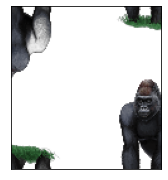}\\
\vspace{.1cm}
\includegraphics[width=.8\linewidth]{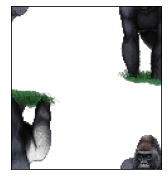}
\end{minipage}
\begin{minipage}{.19\linewidth}
\centering
\includegraphics[width=.8\linewidth]{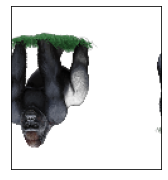}\\
\vspace{.1cm}
\includegraphics[width=.8\linewidth]{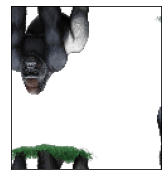}\\
\vspace{.1cm}
\includegraphics[width=.8\linewidth]{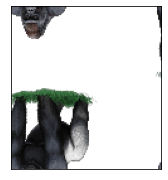}
\end{minipage}
\begin{minipage}{.19\linewidth}
\centering
\includegraphics[width=.8\linewidth]{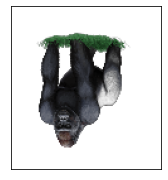}\\
\vspace{.1cm}
\includegraphics[width=.8\linewidth]{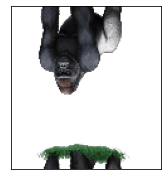}\\
\vspace{.1cm}
\includegraphics[width=.8\linewidth]{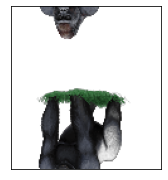}
\end{minipage}
\caption{The dataset consists in a gorilla moving forward and downwards on a Klein bottle.}
\label{fig:34}
\end{figure}
\vspace{-.2cm}

\begin{figure}[H]
\centering
\begin{minipage}{.49\linewidth}
\centering
\includegraphics[width=1\linewidth]{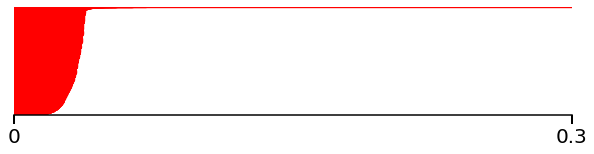}\\
\includegraphics[width=1\linewidth]{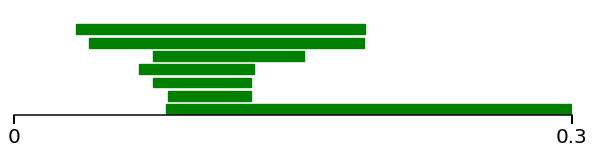}\\
\includegraphics[width=1\linewidth]{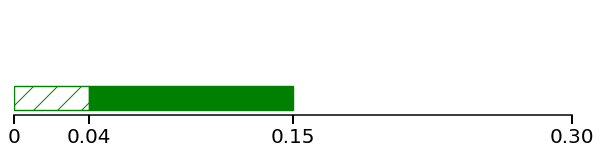}
\end{minipage}
\begin{minipage}{.49\linewidth}
\centering
\includegraphics[width=1\linewidth]{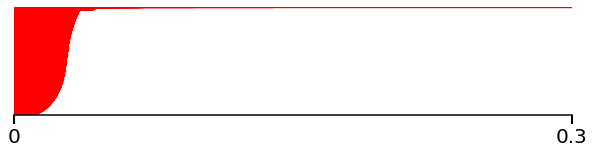}\\
\includegraphics[width=1\linewidth]{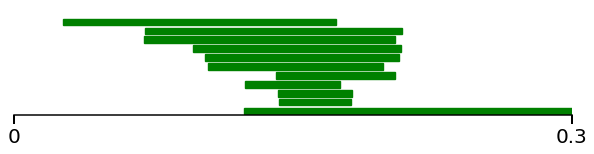}\\
\includegraphics[width=1\linewidth]{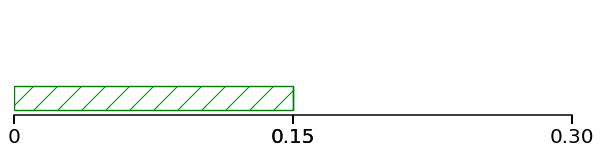}
\end{minipage}
\caption{
\textbf{Left:} The barcodes of the Rips filtration of $\check Y$, and the lifebar of its first persistent Stiefel-Witney class.
\textbf{Right:} Same for $\check Y'$.
Only green bars of length larger than $0.03$ are represented.}
\label{fig:35}
\end{figure}

\paragraph{Gorilla on the projective plane.}
We close this subsection with a last variation of the dataset: the gorilla is translated to the right, with an inversion of the left part, and translated downwards, with also an inversion of the upper part  (see Figure \ref{fig:36}). 
It behaves as if the image was glued to itself according to the gluing of the projective plane $\Grass{1}{\R^3}$. 
The dataset still consists of 3900 images of size $130\times120$ pixels, that we embed into $\R^{46\,800}$ and project into $\R^4$ via PCA. The resulting subset is denoted $Z = \{z_{i,j}, ~i \in \llbracket1,65\rrbracket, j \in \llbracket1,60\rrbracket\}$.

We still consider the vector bundles $r\colon z_{i,j} \mapsto P(z_{i-1, j}-z_{i+1, j})$ and $r'\colon z_{i,j} \mapsto P(z_{i, j-1}-z_{i, j+1})$.
Let $\gamma = 0.3$, and consider the corresponding lifted sets $\check Z$ and $\check Z'$.
We represent on Figure \ref{fig:37} the barcodes of their Rips filtrations, and the lifebars of their first persistent Stiefel-Whitney classes. Both lifebars are non-empty. We deduce that the underlying line bundles are non-trivial.

\begin{figure}[H]
\centering
\begin{minipage}{.19\linewidth}
\centering
\includegraphics[width=.8\linewidth]{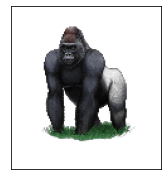}\\
\vspace{.1cm}
\includegraphics[width=.8\linewidth]{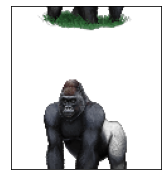}\\
\vspace{.1cm}
\includegraphics[width=.8\linewidth]{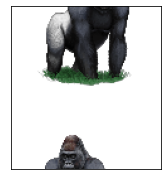}
\end{minipage}
\begin{minipage}{.19\linewidth}
\centering
\includegraphics[width=.8\linewidth]{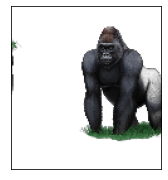}\\
\vspace{.1cm}
\includegraphics[width=.8\linewidth]{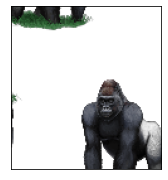}\\
\vspace{.1cm}
\includegraphics[width=.8\linewidth]{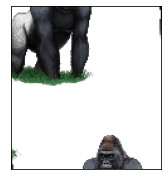}
\end{minipage}
\begin{minipage}{.19\linewidth}
\centering
\includegraphics[width=.8\linewidth]{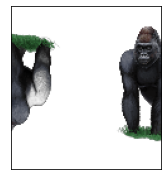}\\
\vspace{.1cm}
\includegraphics[width=.8\linewidth]{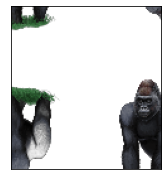}\\
\vspace{.1cm}
\includegraphics[width=.8\linewidth]{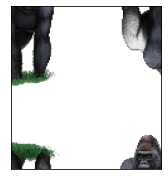}
\end{minipage}
\begin{minipage}{.19\linewidth}
\centering
\includegraphics[width=.8\linewidth]{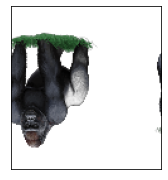}\\
\vspace{.1cm}
\includegraphics[width=.8\linewidth]{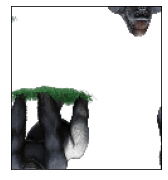}\\
\vspace{.1cm}
\includegraphics[width=.8\linewidth]{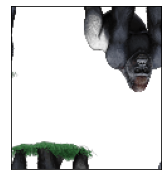}
\end{minipage}
\begin{minipage}{.19\linewidth}
\centering
\includegraphics[width=.8\linewidth]{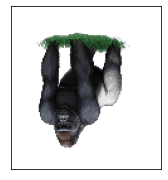}\\
\vspace{.1cm}
\includegraphics[width=.8\linewidth]{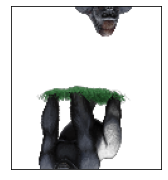}\\
\vspace{.1cm}
\includegraphics[width=.8\linewidth]{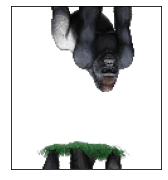}
\end{minipage}
\caption{The dataset consists in a gorilla moving forward and downwards on the projective plane.}
\label{fig:36}
\end{figure}

\begin{figure}[H]
\centering
\begin{minipage}{.49\linewidth}
\centering
\includegraphics[width=1\linewidth]{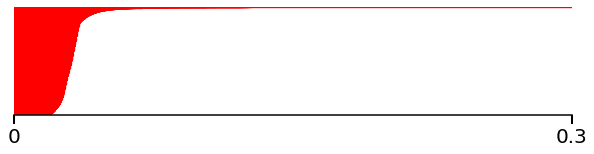}\\
\includegraphics[width=1\linewidth]{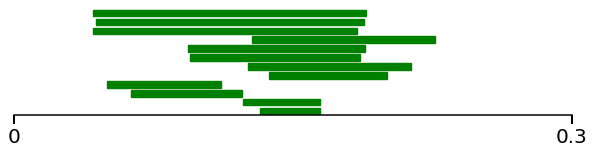}\\
\includegraphics[width=1\linewidth]{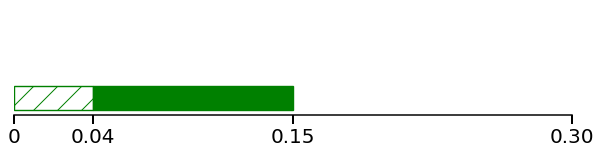}
\end{minipage}
\begin{minipage}{.49\linewidth}
\centering
\includegraphics[width=1\linewidth]{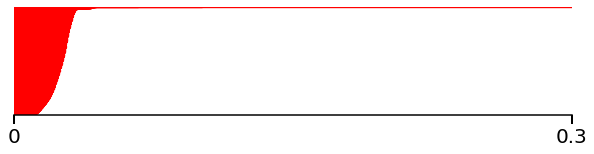}\\
\includegraphics[width=1\linewidth]{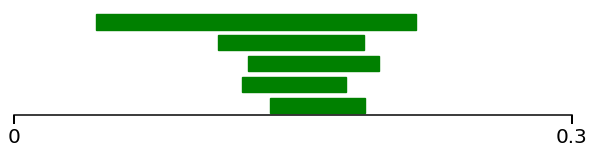}\\
\includegraphics[width=1\linewidth]{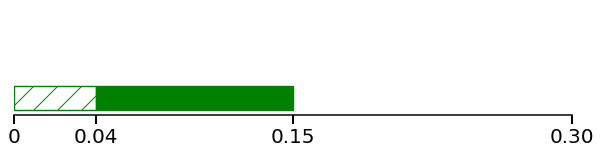}
\end{minipage}
\caption{
\textbf{Left:} The barcodes of the Rips filtration of $\check Z$, and the lifebar of its first persistent Stiefel-Witney class.
\textbf{Right:} Same for $\check Z'$.
Only green bars of length larger than $0.03$ are represented.}
\label{fig:37}
\end{figure}

In these various experiments, we applied the same method: in ordered datasets, indexed by only one value $i\mapsto x_i$, two values $(i,j) \mapsto x_{i,j}$ or more, one can consider directional line bundles by approximating partial derivatives, such as $\frac{1}{2}(x_{i+1,j}-x_{i-1,j})$.
The first persistent Stiefel-Whitney classes of such bundles deliver information about the dataset in this particular direction.

\section{Conclusion}
In this paper we defined the persistent Stiefel-Whitney classes of vector bundle filtrations. We proved that they are stable with respect to the interleaving distance between vector bundle filtrations. 
We studied the particular case of \v{C}ech bundle filtrations of subsets of $\R^n \times \matrixspace{\R^m}$, and showed that they yield consistent estimators of the usual Stiefel-Whitney classes of some underlying vector bundle.

Moreover, when the dimension of the bundle is 1 and $X$ is finite, we proposed an algorithm to compute the first persistent Stiefel-Whitney class.
We also described a way to compute their lifebars via mapping cones.

Our algorithm is limited to the bundles of dimension 1 since we only implemented triangulations of the Grassmannian $\Grass{d}{\R^m}$ when $d=1$.
However, any other triangulation of $\Grass{d}{\R^m}$, with a computable face map, could be included in the algorithm without any modification.
As far as we know, no triangulation of a Grassmannian $\Grass{d}{\R^m}$ with $d > 1$ has never been given explicitely (i.e., as a list of simplices stored in a computer). 
This is a problem of interest, which will be adressed in a further work. A strategy could consist in using the usual CW-structures of the Grassmannians, and converting them into simplicial complexes, as done theoretically in \citet[Theorem 2C.5]{Hatcher_Algebraic}.
A recent result of \cite{govc2020many} gives an idea about the complexity of this problem: the number of simplices of minimal triangulations of $\Grass{d}{\R^m}$ must grow exponentially in both $d$ and $m$.

\paragraph{Acknowledgements.}
I wish to thank Frédéric Chazal and Marc Glisse for fruitful discussions and corrections, as well as the anonymous reviewers for corrections and clarifications.
I also thank Luis Scoccola for pointing out a strengthening of Lemma \ref{lem:shadowmap}.

\appendix

\section{Supplementary material for Sect. \ref{sec:algorithm}}
\label{sec:appendix_algorithm}

\subsection{Study of Example \ref{ex:gamma_mobius}}
\label{subsec:gamma_mobius}

We consider the set
\begin{align*}
&X = \bigg\{
\bigg( 
\begin{pmatrix}
\cos(\theta)  \\
\sin(\theta) 
\end{pmatrix}
,
\begin{pmatrix}
\cos(\frac{\theta}{2})^2 & \cos(\frac{\theta}{2}) \sin(\frac{\theta}{2}) \\
\cos(\frac{\theta}{2}) \sin(\frac{\theta}{2}) & \sin(\frac{\theta}{2})^2 
\end{pmatrix}
\bigg),
\theta \in [0, 2\pi)  \bigg\}.
\end{align*}
In order to study the \v{C}ech filtration of $X$, we shall apply the following affine transformation: let $Y$ be the subset of $\R^2 \times \matrixspace{\R^2}$ defined as
\begin{align*}
&Y = \bigg\{
\bigg( 
\begin{pmatrix}
\cos(\theta)  \\
\sin(\theta) 
\end{pmatrix}
,
\gamma \begin{pmatrix}
\cos(\frac{\theta}{2})^2 & \cos(\frac{\theta}{2}) \sin(\frac{\theta}{2}) \\
\cos(\frac{\theta}{2}) \sin(\frac{\theta}{2}) & \sin(\frac{\theta}{2})^2 
\end{pmatrix}
\bigg),
\theta \in [0, 2\pi)  \bigg\}.
\end{align*}
and let $\Y = (Y^t)_{t\geq 0}$ be the \v{C}ech filtration of $Y$ in $\R^2 \times \matrixspace{\R^2}$ endowed with the norm $\gammaNun{(x,A)} = \sqrt{\eucN{x}^2 + \frobN{A}^2}$. 
We recall that the \v{C}ech filtration of $X$, denoted $\X = (X^t)_{t\geq 0}$, is defined with respect to the norm $\gammaN{\cdot}$.
It is clear that, for every $t \geq 0$, the thickenings $X^t$ and $Y^t$ are homeomorphic via the application
\begin{align*}
h \colon \R^2 \times \matrixspace{\R^2} &\longrightarrow \R^2 \times \matrixspace{\R^2} \\
(x, A) &\longmapsto (x, \gamma A).
\end{align*}
As a consequence, the persistence cohomology modules associated to $\X$ and $\Y$ are isomorphic.

Next, notice that $Y$ is a subset of the affine subspace of dimension 2 of $\R^2 \times \matrixspace{\R^2}$ with origin $O$ and spanned by the vectors $e_1$ and $e_2$, where 
\begin{align*}
O &= \left( \begin{pmatrix} 0 \\ 0   \end{pmatrix}, \frac{\gamma}{2}\begin{pmatrix} 1 & 0 \\ 0 & 1  \end{pmatrix} \right),
~~~~~
e_1 = \left( \begin{pmatrix} 1 \\ 0   \end{pmatrix}, 
\frac{\gamma}{2} \begin{pmatrix} 1 & 0 \\ 0 & -1  \end{pmatrix} \right),
~~~~~
e_2 = \left( \begin{pmatrix} 0 \\ 1   \end{pmatrix}, 
\frac{\gamma}{2} \begin{pmatrix} 0 & 1 \\ 1 & 0  \end{pmatrix} \right).
\end{align*}
Indeed, using the equality
\begin{align*}
\begin{pmatrix}
\cos(\frac{\theta}{2})^2 & \cos(\frac{\theta}{2}) \sin(\frac{\theta}{2}) \\
\cos(\frac{\theta}{2}) \sin(\frac{\theta}{2}) & \sin(\frac{\theta}{2})^2 
\end{pmatrix}
\bigg)
=
\frac{1}{2}\begin{pmatrix} 1 & 0 \\ 0 & 1  \end{pmatrix}
+ 
\frac{1}{2}
\begin{pmatrix}
\cos(\theta) & \sin(\theta) \\
\sin(\theta) & -\cos(\theta) 
\end{pmatrix},
\end{align*}
we obtain
\begin{align*}
Y = 
O
+
\left\{
\cos(\theta) e_1 + \sin(\theta) e_2,
~~\theta \in [0, 2\pi)
\right \}.
\end{align*}
We see that $Y$ is a circle, of radius $\eucN{e_1} = \eucN{e_2} = \sqrt{1 + \frac{\gamma^2}{2}}$.

Let $E$ denotes the affine space with origin $O$ and spanned by the vectors $e_1$ and $e_2$.
Lemma \ref{lem:persistence_lift}, stated below, shows that the persistent cohomology of $Y$, seen in the ambient space $\R^2 \times \matrixspace{\R^2}$, is the same as the persistent cohomology of $Y$ restricted to the subspace $E$.
As a consequence, $Y$ has the same persistence as a circle of radius $\sqrt{1 + \frac{\gamma^2}{2}}$ in the plane. Hence its barcode can be described as follows:
\begin{itemize}
\itemsep0.2em 
\item one $H^0$-feature: the bar $[0, +\infty)$,
\item one $H^1$-feature: the bar $\left[0, \sqrt{1 + \frac{\gamma^2}{2}}\right)$.
\end{itemize}

\begin{lemma}
\label{lem:persistence_lift}
Let $Y \subset \R^n$ be any subset, and define $\checkY = Y \times\{(0, ..., 0)\} \subset \R^n \times \R^m$.
Let these spaces be endowed with the usual Euclidean norms.
Then the \v{C}ech filtrations of $Y$ and $\checkY$ yields isomorphic persistence modules.
\end{lemma}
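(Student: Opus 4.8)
The plan is to reduce everything to a single computation of the distance function to $\checkY$ together with a fibrewise straight-line deformation retraction. First I would observe that, since $\checkY = Y \times \{0\}$, the squared distance from a point $(z,w) \in \R^n \times \R^m$ splits as
\[
\dist{(z,w)}{\checkY}^2 = \inf_{y \in Y}\left( \eucN{z-y}^2 + \eucN{w}^2 \right) = \dist{z}{Y}^2 + \eucN{w}^2.
\]
Consequently the $t$-thickening of $\checkY$ is
\[
\checkY^t = \left\{ (z,w) \in \R^n \times \R^m : \dist{z}{Y}^2 + \eucN{w}^2 \leq t^2 \right\},
\]
and in particular $Y^t \times \{0\} \subseteq \checkY^t$, where I identify the Čech thickening $Y^t \subset \R^n$ with $Y^t \times \{0\}$.

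Next I would build, for each $t$, a deformation retraction of $\checkY^t$ onto $Y^t \times \{0\}$. The natural candidate collapses the $\R^m$-coordinate linearly,
\[
H^t \colon [0,1] \times \checkY^t \longrightarrow \R^n \times \R^m, \qquad (s,(z,w)) \longmapsto (z,(1-s)w).
\]
The only point requiring verification is that the homotopy stays inside $\checkY^t$: if $(z,w) \in \checkY^t$, then $\dist{z}{Y}^2 + (1-s)^2 \eucN{w}^2 \leq \dist{z}{Y}^2 + \eucN{w}^2 \leq t^2$, so $(z,(1-s)w) \in \checkY^t$ for every $s \in [0,1]$. At $s=1$ the image lands in $Y^t \times \{0\}$ (using $\dist{z}{Y} \leq t$), and $H^t$ fixes $Y^t \times \{0\}$ throughout. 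Hence the inclusion $\iota^t \colon Y^t \hookrightarrow \checkY^t$ is a homotopy equivalence, with homotopy inverse the projection $r^t \colon (z,w) \mapsto (z,0)$.

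Finally I would check compatibility with the filtration structure maps. Since the formulas for $H^t$, $\iota^t$, and $r^t$ do not depend on $t$, the inclusions $\iota^t$ commute strictly with the inclusions $Y^s \hookrightarrow Y^t$ and $\checkY^s \hookrightarrow \checkY^t$ for $s \leq t$. Applying the cohomology functor — in which the arrows reverse, following the conventions of this paper — therefore yields a family of isomorphisms $(\iota^t)^* \colon H^*(Y^t) \leftarrow H^*(\checkY^t)$ commuting with the structure maps of both persistence modules, that is, an isomorphism of persistence modules. I do not expect a genuine obstacle here: the argument is purely metric, uses no regularity of $Y$, and the single step needing care is the monotonicity estimate guaranteeing that $H^t$ never leaves $\checkY^t$.
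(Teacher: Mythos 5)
Your proof is correct and follows essentially the same route as the paper, which simply asserts that the coordinate projection $\mathrm{proj}_n\colon \checkY^t \rightarrow Y^t$ is a homotopy equivalence and invokes functoriality; your computation of the distance function and the explicit linear homotopy collapsing the $\R^m$-coordinate just supply the verification the paper leaves to the reader. Nothing further is needed.
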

\begin{proof}
Let $\mathrm{proj}_n\colon \R^n \times \R^m \rightarrow \R^n$ be the projection on the first $n$ coordinates.
One verifies that, for every $t \geq 0$, the map $\mathrm{proj}_n \colon\checkY^t \rightarrow Y^t$ is a homotopy equivalence.
At cohomology level, these maps induce an isomorphism of persistence modules.
\end{proof}

Let us now study the \v{C}ech bundle filtration of $Y$, denoted $(\Y, \p)$.
According to Equation \eqref{eq:tmax_subset_grass}, its filtration maximal value is $\tmax{Y} = \tmaxgamma{X} = \frac{\gamma}{\sqrt{2}}$.
Note that $\frac{\gamma}{\sqrt{2}}$ is lower than $\sqrt{1+ \frac{\gamma^2}{2}}$, which is the radius of the circle $Y$.
Hence, for $t < \tmax{Y}$, the inclusion $Y \hookrightarrow Y^t$ is a homotopy equivalence.
Consider the following commutative diagram:
\begin{center}
\begin{tikzcd}
Y \arrow[dr, "p^0", swap] \arrow[rr, hook]&  & Y^{t} \arrow[dl, "p^{t}"] \\
& \Grass{1}{\R^2} &
\end{tikzcd}
\end{center}
It induces the following diagram in cohomology:
\begin{center}
\begin{tikzcd}
H^*(Y) &  & H^*(Y^{t}) \arrow[ll, "\sim", swap]  \\
& H^*(\Grass{1}{\R^2}) \arrow[ul, "(p^0)^*"] \arrow[ur, "(p^{t})^*", swap] &
\end{tikzcd}
\end{center}
The horizontal arrow is an isomorphism.
Hence the map $(p^t)^*\colon H^*(Y^t) \leftarrow H^*(\Grass{1}{\R^2})$ is equal to $(p^0)^*$. We only have to understand $(p^0)^*$.

Remark that the map $p^0\colon Y \rightarrow \Grass{1}{\R^2}$ can be seen as the tautological bundle of the circle. It is then a standard result that $(p^0)^*\colon H^*(Y) \leftarrow H^*(\Grass{1}{\R^2})$ is nontrivial. 
Alternatively, $p^0$ can be seen as a map between two circles. It is injective, hence its degree (modulo 2) is 1.
We still deduce that $(p^0)^*$ is nontrivial.
As a consequence, the persistent Stiefel-Whitney class $w_1^t(X)$ is nonzero for every $t< \tmax{Y}$.

\subsection{Study of Example \ref{ex:gamma_normal}}
\label{subsec:gamma_normal}
We consider the set 
\begin{align*}
&X = \bigg\{
\bigg( 
\begin{pmatrix}
\cos(\theta)  \\
\sin(\theta) 
\end{pmatrix}
,
\begin{pmatrix}
\cos(\theta)^2 & \cos(\theta) \sin(\theta) \\
\cos(\theta) \sin(\theta) & \sin(\theta)^2 
\end{pmatrix}
\bigg),
\theta \in [0, 2\pi)  \bigg\}.
\end{align*}
As we explained in the previous subsection, the \v{C}ech filtration of $X$ with respect to the norm $\gammaN{\cdot}$ yields the same persistence as the \v{C}ech filtration of $Y$ with respect to the norm $\gammaNun{\cdot}$, where 
\begin{align*}
&Y = \bigg\{
\bigg( 
\begin{pmatrix}
\cos(\theta)  \\
\sin(\theta) 
\end{pmatrix}
,
\gamma \begin{pmatrix}
\cos(\theta)^2 & \cos(\theta) \sin(\theta) \\
\cos(\theta) \sin(\theta) & \sin(\theta)^2 
\end{pmatrix}
\bigg),
\theta \in [0, 2\pi)  \bigg\}.
\end{align*}

Notice that $Y$ is a subset of the affine subspace of dimension 4 of $\R^2 \times \matrixspace{\R^2}$ with origin 
$O = \left( \begin{psmallmatrix} 0 \\ 0   \end{psmallmatrix}, \frac{1}{2}\begin{psmallmatrix} 1 & 0 \\ 0 & 1  \end{psmallmatrix} \right)$ and spanned by the vectors $e_1, e_2, e_3$ and $e_4$, where 
\begin{align*}
e_1 = &\left( \begin{pmatrix} 1 \\ 0   \end{pmatrix}, 
\begin{pmatrix} 0 & 0 \\ 0 & 0  \end{pmatrix} \right),
~~~~~~~~~~~~~~~~~e_2 = \left( \begin{pmatrix} 0 \\ 1   \end{pmatrix},
\begin{pmatrix} 0 & 0 \\ 0 & 0  \end{pmatrix} \right), \\
e_3 = \frac{1}{\sqrt{2}} &\left( \begin{pmatrix} 0 \\ 0   \end{pmatrix}, 
\begin{pmatrix} 1 & 0 \\ 0 & -1  \end{pmatrix} \right), 
~~~~~~~e_4 = \frac{1}{\sqrt{2}} \left( \begin{pmatrix} 0 \\ 0   \end{pmatrix}, 
\begin{pmatrix} 0 & 1 \\ 1 & 0  \end{pmatrix} \right).
\end{align*}
\noindent
Indeed, $Y$ can be written as
\begin{align*}
Y = O + 
\left\{
\cos(\theta) e_1 + \sin(\theta) e_2 + \frac{\gamma}{\sqrt{2}}\cos(2\theta) e_3 +  \frac{\gamma}{\sqrt{2}}\sin(2\theta) e_4,
~~\theta \in [0, 2\pi)
\right \}.
\end{align*}
This comes from the equality
\begin{align*}
\begin{pmatrix}
\cos(\theta)^2 & \cos(\theta) \sin(\theta) \\
\cos(\theta) \sin(\theta) & \sin(\theta)^2 
\end{pmatrix}
=
\frac{1}{2}\begin{pmatrix} 1 & 0 \\ 0 & 1  \end{pmatrix}
+ 
\frac{1}{2}
\begin{pmatrix}
\cos(2\theta) & \sin(2\theta) \\
\sin(2\theta) & -\cos(2\theta) 
\end{pmatrix}.
\end{align*}
\noindent
Observe that $Y$ is a torus knot, i.e. a simple closed curve included in the torus $\T$, defined as
\begin{align*}
\T = 
O
+
\left\{
\cos(\theta) e_1 + \sin(\theta) e_2 + \frac{\gamma}{\sqrt{2}}\cos(\nu) e_3 +  \frac{\gamma}{\sqrt{2}}\sin(\nu) e_4,
~~\theta, \nu \in [0, 2\pi)
\right \}.
\end{align*}
The curve $Y$ winds one time around the first circle of the torus, and two times around the second one, as represented in Figure \ref{fig:17}. It is known as the torus knot $(1,2)$.

Let $E$ denotes the affine subspace with origin $O$ and spanned by $e_1, e_2, e_3, e_4$. 
Since $Y$ is a subset of $E$, it is equivalent to study the \v{C}ech filtration of $Y$ restricted to $E$ (as in Lemma \ref{lem:persistence_lift}).
We shall denote the coordinates of points $x \in E$ with respect to the orthonormal basis $(e_1, e_2, e_3, e_4)$. That is, a tuple $(x_1, x_2, x_3, x_4)$ shall refer to the point $O+x_1 e_1 + x_2e_2 + x_3e_3 + x_4 e_4$ of $E$.
Seen in $E$, the set $Y$ can be written as 
\begin{align*}
Y = 
\left\{\left(\cos(\theta), \sin(\theta), 
\frac{\gamma}{\sqrt{2}}\cos(2\theta), \frac{\gamma}{\sqrt{2}}\sin(2\theta)\right), \theta \in [0, 2\pi)\right\}.
\end{align*}
Moreover, for every $\theta \in [0, 2\pi)$, we shall denote $y_\theta = \left(\cos(\theta), \sin(\theta),  \frac{\gamma}{\sqrt{2}} \cos(2\theta),  \frac{\gamma}{\sqrt{2}} \sin(2\theta)\right)$.

\begin{figure}[H]
\centering
\begin{minipage}{.49\linewidth}
\centering
\includegraphics[width=.5\linewidth]{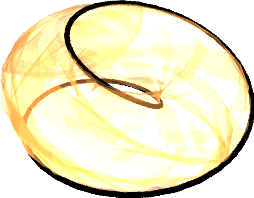}
\end{minipage}
\begin{minipage}{.49\linewidth}
\centering
\includegraphics[width=.6\linewidth]{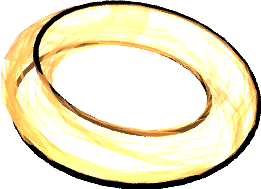}
\end{minipage}
\caption{Representations of the set $Y$, lying on a torus, for a small value of $\gamma$ (left) and a large value of $\gamma$ (right).}
\label{fig:17}
\end{figure}

We now state two lemmas that will be useful in what follows.
\begin{lemma}
\label{lem:torusknot_diameter}
For every $\theta \in [0, 2\pi)$, the map $\theta' \mapsto \eucN{y_{\theta} -  y_{\theta'}}$ admits the following critical points:
\begin{itemize}
\itemsep0.2em 
\item $\theta'-\theta = 0$ and $\theta'-\theta = \pi$ if $\gamma \leq \frac{1}{\sqrt{2}}$,
\item $\theta'-\theta = 0$, $\pi$, $\arccos(-\frac{1}{2\gamma^2})$ and $-\arccos(-\frac{1}{2\gamma^2})$ if $\gamma \geq \frac{1}{\sqrt{2}}$.
\end{itemize}
They correspond to the values 
\begin{itemize}
\itemsep0.2em 
\item $\eucN{y_{\theta} -  y_{\theta'}} = 0$ ~if~ $\theta'-\theta = 0$, 
\item $\eucN{y_{\theta} -  y_{\theta'}} = 2$ ~if~ $\theta'-\theta = \pi$,
\item $\eucN{y_{\theta} -  y_{\theta'}} = \sqrt{2}\sqrt{1 + \gamma^2 + \frac{1}{4 \gamma^2}}$ ~if~ $\theta'-\theta = \pm \arccos(-\frac{1}{2\gamma^2})$.
\end{itemize}
Moreover, we have $\sqrt{2}\sqrt{1 + \gamma^2 + \frac{1}{4 \gamma^2}} \geq 2$ when $\gamma \geq \frac{1}{\sqrt{2}}$.
\end{lemma}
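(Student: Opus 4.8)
The plan is to reduce the problem to a single-variable calculus exercise by computing the squared distance explicitly. First I would set $\phi = \theta' - \theta$ and expand $\eucN{y_\theta - y_{\theta'}}^2$. Applying the elementary identity $(\cos a - \cos b)^2 + (\sin a - \sin b)^2 = 2 - 2\cos(a-b)$ to both the $(\cos\theta, \sin\theta)$ block and the $\big(\tfrac{\gamma}{\sqrt 2}\cos 2\theta, \tfrac{\gamma}{\sqrt 2}\sin 2\theta\big)$ block, and then using $1 - \cos 2\phi = 2\sin^2\phi$, this collapses to
\begin{align*}
f(\phi) := \eucN{y_\theta - y_{\theta'}}^2 = 2 - 2\cos\phi + 2\gamma^2\sin^2\phi.
\end{align*}
It suffices to analyse $f$, since its critical points in $\theta'$ coincide with those of the distance $\theta' \mapsto \eucN{y_\theta - y_{\theta'}}$ (the vanishing locus $\phi = 0$ being a critical point of both).

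Next I would differentiate and factor: $f'(\phi) = 2\sin\phi + 4\gamma^2\sin\phi\cos\phi = 2\sin\phi\,(1 + 2\gamma^2\cos\phi)$. The critical points are the zeros of the two factors. The factor $\sin\phi$ vanishes at $\phi = 0$ and $\phi = \pi$. The factor $1 + 2\gamma^2\cos\phi$ vanishes exactly when $\cos\phi = -\tfrac{1}{2\gamma^2}$, which admits a solution if and only if $\tfrac{1}{2\gamma^2} \leq 1$, that is $\gamma \geq \tfrac{1}{\sqrt 2}$, in which case the solutions are $\phi = \pm\arccos\big(-\tfrac{1}{2\gamma^2}\big)$. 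This produces precisely the case distinction stated in the lemma; note that $f$ depends on $\phi$ only through $\cos\phi$ and $\sin^2\phi$, hence is even in $\phi$, so the critical set is the same whether one parametrises by $\theta' - \theta$ or $\theta - \theta'$.

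Then I would evaluate $f$ at each critical value. Immediately $f(0) = 0$ and $f(\pi) = 4$, giving distances $0$ and $2$. At the remaining points I substitute $\cos\phi = -\tfrac{1}{2\gamma^2}$ into $f(\phi) = 2 - 2\cos\phi + 2\gamma^2(1 - \cos^2\phi)$ and simplify to $2\big(1 + \gamma^2 + \tfrac{1}{4\gamma^2}\big)$, so the distance equals $\sqrt 2\,\sqrt{1 + \gamma^2 + \tfrac{1}{4\gamma^2}}$, as claimed. Finally, for the inequality $\sqrt 2\,\sqrt{1 + \gamma^2 + \tfrac{1}{4\gamma^2}} \geq 2$, I would square and reduce it to $\gamma^2 + \tfrac{1}{4\gamma^2} \geq 1$, which follows from the arithmetic--geometric mean inequality
\begin{align*}
\gamma^2 + \frac{1}{4\gamma^2} \geq 2\sqrt{\gamma^2 \cdot \frac{1}{4\gamma^2}} = 1,
\end{align*}
valid for every $\gamma > 0$, with equality at $\gamma = \tfrac{1}{\sqrt 2}$. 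I do not anticipate any genuine obstacle: the whole argument is a direct computation, and the only steps requiring care are the trigonometric simplification collapsing the four-coordinate distance to $f(\phi)$ in the first step, and the existence condition $\gamma \geq \tfrac{1}{\sqrt 2}$ that governs the appearance of the third family of critical points.
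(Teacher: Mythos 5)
Your proposal is correct and follows essentially the same route as the paper: both reduce to the squared distance $2-2\cos\phi+2\gamma^2\sin^2\phi$ (the paper writes the equivalent form $4\sin^2(\phi/2)+2\gamma^2\sin^2\phi$), factor the derivative as $2\sin\phi\,(1+2\gamma^2\cos\phi)$, and evaluate at the resulting critical points. Your explicit AM--GM justification of the final inequality is a small addition the paper leaves implicit, but the argument is the same.
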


\begin{proof}
Let $\theta, \theta' \in [0, 2\pi)$.
One computes that
\begin{equation*}
\eucN{y_{\theta} -  y_{\theta'}}^2
= 4 \sin^2\left(\frac{\theta-\theta'}{2}\right) + 2 \gamma^2 \sin^2(\theta-\theta').
\end{equation*}
Consider the map $f\colon x \in [0, 2\pi) \mapsto  4 \sin^2\left(\frac{x}{2}\right) + 2 \gamma^2 \sin^2(x)$.
Its derivative is
\begin{align*}
f'(x) 
&= 4 \cos\left(\frac{x}{2}\right) \sin\left(\frac{x}{2}\right) + 4 \gamma^2 \cos(x) \sin(x) \\
&= 2 \sin(x)\left(1+2\gamma^2 \cos(x)\right).
\end{align*}
It vanishes when $x = 0$, $x = \pi$, or $x = \pm \arccos(-\frac{1}{2\gamma^2})$ if $\gamma \geq \frac{1}{\sqrt{2}}$.
To conclude, a computation shows that $f(0) = 0$, $f(\pi) = 4$ and $f\left(\pm \arccos\left(-\frac{1}{2\gamma^2}\right) \right) = 2\left( 1 + \gamma^2 + \frac{1}{4\gamma^2}\right)$.  
\end{proof}

\begin{lemma}
\label{lem:torusknot_distance}
For every $x \in E$ such that $x \neq 0$, the map $\theta \mapsto \eucN{x-y_\theta}$ admits at most two local maxima and two local minima.
\end{lemma}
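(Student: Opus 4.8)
The plan is to reduce the problem to counting the critical points of a single trigonometric polynomial. First I would expand the squared distance as
\begin{align*}
\eucN{x - y_\theta}^2 = \eucN{x}^2 + \eucN{y_\theta}^2 - 2\eucP{x}{y_\theta},
\end{align*}
and observe that $\eucN{y_\theta}^2 = 1 + \frac{\gamma^2}{2}$ does not depend on $\theta$. Hence, up to an additive constant, $\eucN{x - y_\theta}^2$ equals $-2 g(\theta)$, where
\begin{align*}
g(\theta) = \eucP{x}{y_\theta} = x_1 \cos\theta + x_2 \sin\theta + \tfrac{\gamma}{\sqrt 2} x_3 \cos 2\theta + \tfrac{\gamma}{\sqrt2} x_4 \sin 2\theta,
\end{align*}
with $(x_1, x_2, x_3, x_4)$ the coordinates of $x$ in the basis $(e_1, e_2, e_3, e_4)$. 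Since $\theta \mapsto \eucN{x - y_\theta}$ and its square share the same local maxima and minima, and since the local maxima of $g$ are the local minima of $-2g$ and conversely, it suffices to show that $g$ has at most two local maxima and two local minima.

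The key point is that $g$ is a trigonometric polynomial of degree at most $2$, so its derivative
\begin{align*}
g'(\theta) = -x_1 \sin\theta + x_2 \cos\theta - \sqrt2\,\gamma\, x_3 \sin 2\theta + \sqrt2\,\gamma\, x_4 \cos 2\theta
\end{align*}
is again a trigonometric polynomial of degree at most $2$, and it is not identically zero because $x \neq 0$. I would then invoke the standard fact that, after the substitution $z = e^{i\theta}$, the zeros of such a polynomial on $[0, 2\pi)$ correspond to the unit-circle roots of a nonzero complex polynomial of degree at most $4$; consequently $g'$ has at most four distinct zeros on the circle. Because $g$ is real-analytic and non-constant, its critical points are isolated and all its local extrema are strict.

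Finally I would run an alternation argument on the circle $\R/2\pi\Z$: between two consecutive critical points $g'$ keeps a constant sign, a local maximum occurs exactly where $g'$ switches from $+$ to $-$, and a local minimum exactly where it switches from $-$ to $+$. Travelling once around the circle, these two kinds of switches must alternate, so the number $N$ of local maxima equals the number of local minima; as each switch is a distinct zero of $g'$, we get $2N \le 4$, hence $N \le 2$. The routine part is the expansion and the cancellation of $\eucN{y_\theta}^2$; the step requiring the most care is this last passage from a bound on the number of critical points to the separate bounds on maxima and minima, where one must combine the root count for trigonometric polynomials with the parity/alternation of sign changes, analyticity being what rules out degenerate flat extrema.
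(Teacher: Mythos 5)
Your proposal is correct and follows essentially the same route as the paper: both reduce to showing that $g'(\theta)$, a trigonometric polynomial of degree at most $2$ with not-all-zero coefficients, has at most four zeros on the circle, and then conclude the bound on extrema. The only cosmetic difference is that you count roots via the substitution $z=e^{i\theta}$ and a degree-$4$ complex polynomial, whereas the paper substitutes $\omega=\cos\theta$, $\xi=\sin\theta$ and squares to get a degree-$4$ real polynomial; your explicit alternation argument at the end also spells out a step the paper leaves implicit.
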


\begin{proof}
Consider the map $g\colon \theta \in [0, 2\pi) \mapsto \eucN{x-y_\theta}^2$. It can be written as
\begin{align*}
g(\theta) 
&= \eucN{x}^2 +  \eucN{y_\theta}^2 - 2\eucP{x}{y_\theta} \\
 &= \eucN{x}^2 +  1 + \frac{\gamma^2}{2} - 2\eucP{x}{y_\theta}.
\end{align*}
Let us show that its derivative $g'$ vanishes at most four times on $[0,2\pi)$, which will prove the result. 
Using the expression of $y_\theta$, we see that $g'$ can be written as
\begin{align*}
g'(\theta)
= a \cos(\theta) + b \sin(\theta)+ c \cos(2\theta)+ d \sin (2\theta),
\end{align*}
where $a,b,c,d \in \R$ are not all zero.
Denoting $\omega = \cos(\theta)$  and $\xi = \sin(\theta)$, we have $\xi^2 = 1-\omega^2$, $\cos(2 \theta) = \cos^2(\theta) - \sin^2(\theta) = 2\omega^2 -1$ and $\sin(2\theta) = 2 \cos(\theta) \sin(\theta) = 2\omega\xi$.
Hence
\begin{align*}
g'(\theta)
= a \omega + b \xi + 2 c \omega^2  + 2d \omega \xi.
\end{align*}
Now, if $g'(\theta) = 0$, we get 
\begin{equation}
\label{eq:torusknot_distance}
a \omega + 2 c \omega^2 = -( b  + 2d \omega) \xi
\end{equation}
Squaring this equality yields
$\left(a \omega + 2 c \omega^2\right)^2
= \left( b   + 2d \omega \right)^2 (1-\omega^2)$.
This degree four equation, with variable $\omega$, admits at most four roots.
To each of these $w$, there exists a unique $\xi = \pm \sqrt{1-w^2}$ that satisfies Equation \eqref{eq:torusknot_distance}.
In other words, the corresponding $\theta \in [0,2\pi)$ such that $\omega = \cos(\theta)$ is unique.
We deduce that $g'$ vanishes at most four times on $[0,2\pi)$.
\end{proof}

\medbreak
Before studying the \v{C}ech filtration of $Y$, let us describe some geometric quantities associated to it.
Using a symbolic computation software, we see that the curvature of $Y$ is constant and equal to
\begin{align*}
\rho = \frac{\sqrt{1+8\gamma^2}}{1+2\gamma^2}.
\end{align*}
In particular, we have $\rho \geq 1$ if $\gamma \leq 1$, and $\rho < 1$ if $\gamma > 1$.
We also have an expression for the diameter of $Y$:
\begin{align*}
\frac{1}{2}\diam{Y} 
= \left\{
    \begin{array}{ll}
        1 & \mbox{ if } \gamma \leq \frac{1}{\sqrt{2}}, \\
        \frac{1}{\sqrt{2}} \sqrt{1 + \gamma^2 + \frac{1}{4 \gamma^2}} & \mbox{ if } \gamma \geq \frac{1}{\sqrt{2}}.
    \end{array}
\right.
\end{align*}
It is a consequence of Lemma \ref{lem:torusknot_diameter}.
We now describe the reach of $Y$:
\begin{equation}
\label{eq:reach_Y}
\reach{Y}
= \left\{
    \begin{array}{ll}
        \frac{1+2\gamma^2}{\sqrt{1+8\gamma^2}} & \mbox{ if } \gamma \leq 1, \\
        1 & \mbox{ if } \gamma \geq 1.
    \end{array}
\right.
\end{equation}
To prove this, we first define a \emph{bottleneck} of $Y$ as pair of distinct points $(y,y') \in Y^2$ such that the open ball $\openball{\frac{1}{2}(y+y')}{\frac{1}{2}\eucN{y-y'}}$ does not intersect $Y$. Its \emph{length} is defined as $\frac{1}{2}\eucN{y-y'}$.
According to the results of \citet[Theorem 3.4]{aamari:hal-01521955}, the reach of $Y$ is equal to
\begin{align*}
\reach{Y} = \min\left\{\frac{1}{\rho}, \delta\right\},
\end{align*}
where $\frac{1}{\rho}$ is the inverse curvature of $Y$, and $\delta$ is the minimal length of bottlenecks of $Y$.
As we computed, $\frac{1}{\rho}$ is equal to $\frac{1+2\gamma^2}{\sqrt{1+8\gamma^2}} $.
Besides, according to Lemma \ref{lem:torusknot_diameter}, a bottleneck $(y_\theta, y_{\theta'})$ has to satisfy $\theta'-\theta = \pi$ or $\pm\arccos(-\frac{1}{2\gamma^2})$. The smallest length is attained when $\theta'-\theta = \pi$, for which $\frac{1}{2}\eucN{y_\theta - y_{\theta'}}=1$. It is straightforward to verify that the pair $(y_\theta, y_{\theta'})$  is indeed a bottleneck.
Therefore we have $\delta=1$, and we deduce the expression of $\reach{Y}$.

Last, the weak feature size of $Y$ does not depend on $\gamma$ and is equal to 1:
\begin{equation}
\label{eq:torusknot_wfs}
\wfs{Y} = 1.
\end{equation}
We shall prove it by using the characterization of \citet{boissonnat2018geometric}: $\wfs{Y}$ is the infimum of distances $\dist{x}{Y}$, where $x \in E$ is a critical point of the distance function $d_Y$. 
In this context, $x$ is a critical point if it lies in the convex hull of its projections on $Y$. 
Remark that, if $x \neq 0$, then $x$ admits at most two projections on $Y$. This follows from Lemma \ref{lem:torusknot_distance}.
As a consequence, if $x$ is a critical point, then there exists $y, y' \in Y$ such that $x$ lies in the middle of the segment $[y,y']$, and the open ball $\openball{x}{\dist{x}{Y}}$ does not intersect $Y$. Therefore $y'$ is a critical point of $y' \mapsto \eucN{y-y'}$, hence Lemma \ref{lem:torusknot_diameter} gives that $\eucN{y-y'} \geq 2$. We deduce the result.

\medbreak
We now describe the thickenings $Y^t$. They present four different behaviours:
\begin{itemize}
\item $0\leq t<1$: $Y^t$ is homotopy equivalent to a circle,
\item $1 \leq t < \frac{1}{2}\diam{Y}$: $Y^t$ is homotopy equivalent to a circle,
\item $ \frac{1}{2}\diam{Y} \leq t < \sqrt{1+\frac{\gamma^2}{2}}$: $Y^t$ is homotopy equivalent to a 3-sphere,
\item $ t \geq \sqrt{1+\frac{\gamma^2}{2}}$: $Y^t$ is homotopy equivalent to a point.
\end{itemize}
Recall that, in the case where $\gamma \leq \frac{1}{\sqrt{2}}$, we have $\frac{1}{2}\diam{Y} = 1$.
Consequently, the interval $\left[1,\frac{1}{2}\diam{Y}\right)$ is empty, and the second point does not appear in this case.

\medbreak\noindent
\emph{Study of the case $0\leq t<1$.}
For $t \in [0,1)$, let us show that $Y^t$ deform retracts on $Y$. 
According to Equation \eqref{eq:torusknot_wfs}, we have $\wfs{Y} = 1$. 
Moreover, Equation \eqref{eq:reach_Y} gives that $\reach{Y} > 0$.
Using the results of \citet{boissonnat2018geometric}, we deduce that $Y^t$ is isotopic to $Y$.

\medbreak\noindent
\emph{Study of the case  $1 \leq t < \frac{1}{2}\diam{Y}$.}
Denote $z_\theta = \left(0,0,\frac{\gamma}{\sqrt{2}}\cos(2\theta), \frac{\gamma}{\sqrt{2}}\sin(2\theta)\right)$, and define the circle $Z = \left\{ z_\theta, \theta \in [0,\pi) \right\}$.
It is repredented in Figure \ref{fig:18}.

\begin{figure}[H]
\centering
\includegraphics[width=.25\linewidth]{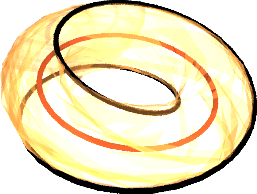}
\caption{Representation of the set $Y$ (black) and the circle $Z$ (red).}
\label{fig:18}
\end{figure}

We claim that $Y^t$ deform retracts on $Z$.
To prove so, we shall define a continuous application $f\colon Y^t \rightarrow Z$ such that, for every $y\in Y^t$, the segment $[y, f(y)]$ is included in $Y^t$. 
This would lead to a deformation retraction of $Y^t$ onto $Z$, via
\begin{align*}
(s,y) \in [0,1]\times Y^t \mapsto (1-s)y + s f(y).
\end{align*}
Equivalently, we shall define an application $\Theta\colon Y^t \rightarrow [0,\pi)$ such that the segment $[y, z_{\Theta(y)}]$ is included in $Y^t$.

Let $y \in Y^t$. 
According to Lemma \ref{lem:torusknot_distance}, $y$ admits at most two projection on $Y$.
We start with the case where $y$ admits only one projection, namely $y_{\theta}$ with $\theta \in [0, 2\pi)$. Let $\overline{\theta} \in [0, \pi)$ be the reduction of $\theta$ modulo $\pi$, and consider the point $z_{\overline{\theta}}$ of $Z$. 
A computation shows that the distance $\eucN{y_\theta - z_{\overline{\theta}}}$ is equal to 1. Besides, since $y \in Y^t$, the distance $\eucN{y_\theta - y}$ is at most $t$. By convexity, the segment $\left[y, z_{\overline{\theta}}\right]$ is included in the ball $\closedball{y_\theta}{t}$, which is a subset of $Y^t$.
We then define $\Theta(y)= \overline{\theta}$.

Now suppose that $y$ admits exactly two projection $y_\theta$ and $y_{\theta'}$. According to Lemma \ref{lem:torusknot_diameter}, these angles must satisfy $\theta'-\theta = \pi$. 
Indeed, the case $\eucN{y_{\theta} -  y_{\theta'}} = \sqrt{2}\sqrt{1 + \gamma^2 + \frac{1}{4 \gamma^2}}$ does not occur since we chose $t < \frac{1}{2}\diam{Y} = \frac{\sqrt{2}}{2}\sqrt{1 + \gamma^2 + \frac{1}{4 \gamma^2}}$.
The angles $\theta$ and $\theta'$ correspond to the same reduction modulo $\pi$, denoted $\overline{\theta}$, and we also define $\Theta(y)= \overline{\theta}$.

\medbreak\noindent
\emph{Study of the case  $t\in \left[ \frac{1}{2}\diam{X}, \sqrt{1+\frac{\gamma^2}{2}}\right)$.}
Let $\S_3$ denotes the unit sphere of $E$.
For every $v = (v_1,v_2,v_3,v_4) \in \S_3$, we shall denote by $\langle v \rangle$ the linear subspace spanned by $v$, and by $\langle v \rangle_+$ the cone $\{\lambda v, \lambda \geq 0\}$.
Moreover, we define the quantity
\begin{align*}
\delta(v) = \min_{y \in Y} \dist{y}{\langle v \rangle_+}.
\end{align*}
and the set
\begin{align*}
S = \left\{ \delta(v) v, v \in \S_3  \right\}.
\end{align*}
The situation is depicted in Figure \ref{fig:19}.
We claim that $S$ is a subset of $Y^t$, and that $Y^t$ deform retracts on it.
This follows from the two following facts: for every $v \in \S_3$, 
\begin{enumerate}
\item $\delta(v)$ is not greater than $\frac{1}{2}\diam{Y}$,
\item $\langle v \rangle_+ \cap Y^t$ consists of one connected component: an interval centered on $\delta(v)v$, that does not contain the point 0.
\end{enumerate}
Suppose that these assertions are true.
Then one defines a deformation retraction of $Y^t$ on $S$ by retracting each fiber $\langle v \rangle_+ \cap Y^t$ linearly on the singleton $\{ \delta(v)v \}$.
We shall now prove the two items.
\begin{figure}[H]
\centering
\includegraphics[width=.3\linewidth]{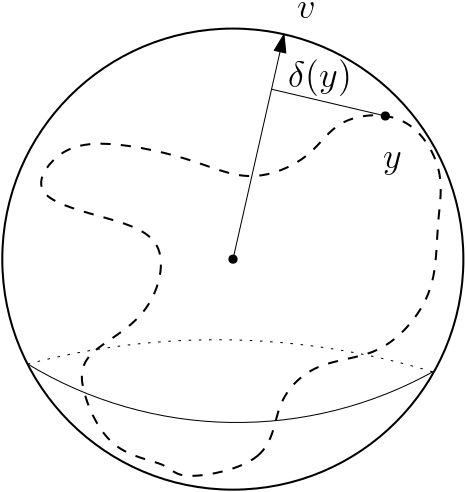}
\caption{Representation of the set $Y$ (dashed), lying on a 3-sphere of radius $\sqrt{1+\frac{\gamma^2}{2}}$.}
\label{fig:19}
\end{figure}

\noindent
\textit{Item 1.} 
Note that Item 1 can be reformulated as follows:
\begin{equation}
\label{eq:torusknot_maxmin_plus}
\max_{v \in \S_3} \min_{y \in Y} \dist{y}{\langle v \rangle_+}
\leq \frac{1}{2} \diam{Y}.
\end{equation}
Let us justify that the pairs $(v,y)$ that attain this maximum-minimum are the same as in
\begin{equation}
\label{eq:torusknot_maxmin_norm}
\max_{v \in \S_3} \min_{y \in Y} \eucN{y-v}.
\end{equation}
From the definition of $Y = \{y_\theta, \theta \in [0,2\pi)\}$, we see that $\min_{y \in Y} \dist{y}{\langle v \rangle_+} = \min_{y \in Y} \dist{y}{\langle v \rangle}$.
A vector $v \in \S_3$ being fixed, let us show that
$y\mapsto \dist{y}{\langle v \rangle}$ is minimized when $y \mapsto \eucN{v-y}$ is.
Let $y\in Y$.
Since $v$ is a unit vector, the projection of $y$ on $\langle v \rangle$ can be written as $\eucP{y}{v}v$. 
Hence $\dist{y}{\langle v \rangle}^2 = \eucN{\eucP{y}{v}v - y}^2$, and expanding this norm yields 
\begin{align*}
\dist{y}{\langle v \rangle}^2 = \eucN{y}^2-\eucP{y}{v}^2.
\end{align*}
Expanding the norm $\eucN{y-v}^2$ and
using that $\eucN{y}^2 = 1 + \frac{\gamma^2}{2}$, we get $\eucP{y}{v} = \frac{1}{2}\left( 2 + \frac{\gamma^2}{2} - \eucN{y-v}^2\right)$.
We inject this relation in the preceding equation to obtain
\begin{align*}
\dist{y}{\langle v \rangle}^2 
= -\left(\frac{\gamma}{2}\right)^4+\gamma^2 + \frac{1}{4}\eucN{y-v}^2\left(4+\gamma^2 -\eucN{y-v}^2\right).
\end{align*}
Now we can deduce that $y \mapsto \dist{y}{\langle v \rangle}^2$ is minimized when $y \mapsto \eucN{y-v}$ is minimized.
Indeed, the map $\eucN{y-v}\mapsto \frac{1}{4}\eucN{y-v}^2\left(4+\gamma^2 -\eucN{y-v}^2\right)$ is increasing on $\left[0, \frac{1}{2}(4+\gamma^2)\right]$. 
But $\eucN{y-v} \leq \eucN{y} + \eucN{v} = \frac{1}{2}(4+\gamma^2)$.

We deduce that studying the left hand term of Equation \eqref{eq:torusknot_maxmin_plus} is equivalent to studying Equation \eqref{eq:torusknot_maxmin_norm}.
We shall denote by $g\colon \S_3 \rightarrow \R$ the map
\begin{equation}
\label{eq:torusknot_g}
g(v) = \min_{y \in Y} \eucN{y-v}.
\end{equation}

Let $v \in \S_3$ that attains the maximum of $g$, and let $y$ be a corresponding point that attains the minimum of $\eucN{y-v}$.
The points $v$ and $y$ attains the quantity in Equation \eqref{eq:torusknot_maxmin_plus}.
In order to prove that $\dist{y}{\langle v \rangle} \leq \frac{1}{2}\diam{Y}$, let $p(y)$ denotes the projection of $y$ on $\langle v \rangle$.
We shall show that there exists another point $y' \in Y$ such that $p(y)$ is equal to $\frac{1}{2}(y+y')$
Consequently, we would have $\eucN{y - p(y)} = \frac{1}{2}\eucN{y' - y} \leq \frac{1}{2}\diam{Y}$, i.e.
\begin{align*}
\dist{y}{\langle v \rangle} \leq \frac{1}{2}\diam{Y}.
\end{align*}

Remark the following fact: if $w \in \S_3$ is a unit vector such that $\eucP{p(y)-y}{w} >0$, then for $\epsilon>0$ small enough, we have 
\begin{align*}
\dist{y}{\langle v+\epsilon w \rangle} > \dist{y}{\langle v \rangle}. 
\end{align*}
Equivalently, this statement reformulates as $0 \leq \eucP{y}{\frac{1}{\eucN{v+\epsilon w}}(v+\epsilon w)} < \eucP{y}{v}$.
Let us show that
\begin{equation}
\label{eq:torusknot_eucP}
\eucP{y}{\frac{1}{\eucN{v+\epsilon w}}(v+\epsilon w)} 
= \eucP{y}{v} - \epsilon \kappa + \petito{\epsilon},
\end{equation}
where $\kappa = \eucP{p(y)-y}{w} > 0$, and where $\petito{\epsilon}$ is the little-o notation.
Note that $\frac{1}{\eucN{v+\epsilon w}} = 1-\epsilon \eucP{v}{w} + \petito{\epsilon}$. 
We also have
\begin{align*}
\frac{1}{\eucN{v+\epsilon w}}(v+\epsilon w)
&= v + \epsilon \left(w-\eucP{v}{w}v \right) + \petito{\epsilon}.
\end{align*}
Expanding the inner product in Equation \eqref{eq:torusknot_eucP} gives
\begin{align*}
\eucP{y}{\frac{1}{\eucN{v+\epsilon w}}(v+\epsilon w)} 
&= \eucP{y}{v} +  \epsilon \bigg( \eucP{y}{w} - \eucP{v}{w} \eucP{y}{v} \bigg)  + \petito{\epsilon} \\
&= \eucP{y}{v} + \epsilon \bigg \langle y-\eucP{y}{v}v , w \bigg \rangle +\petito{\epsilon} \\
&= \eucP{y}{v} + \epsilon \eucP{ y-p(y) }{w} +\petito{\epsilon},
\end{align*}
and we obtain the result.

Next, let us prove that $y$ is not the only point of $Y$ that attains the minimum in Equation \eqref{eq:torusknot_g}.
Suppose that it is the case by contradiction.
Let $w \in \S_3$ be a unit vector such that $\eucP{p(y)-y}{w} >0$.
For $\epsilon$ small enough, let us prove that the vector $v' = \frac{1}{\eucN{v+\epsilon w}}(v+\epsilon w)$ of $\S_3$ contradicts the maximality of $v$.
That is, let us prove that $g(v') > g(v)$.
Let $y' \in Y$ be a minimizer $\eucN{y'-v'}$.
We have to show that $\eucN{y'-v'} > \eucN{y-v}$.
This would lead to $g(v') > g(v)$, hence the contradiction.

Expanding the norm yields
\begin{align*}
\eucN{v'-y'}^2
= \eucN{v'-v+v-y'}^2
&\geq \eucN{v'-v}^2 + \eucN{v-y'}^2 -2\eucP{v'-v}{v-y'}.
\end{align*}
Using $\eucN{v'-v}^2 \geq 0$ and $\eucN{v-y'}^2 \geq \eucN{v-y}^2$ by definition of $y$, we obtain
\begin{align*}
\eucN{v'-y'}^2
\geq \eucN{v-y}^2 -2\eucP{v'-v}{v-y'}.
\end{align*}
We have to show that $\eucP{v'-v}{y-y'}$ is positive for $\epsilon$ small enough.
By writing $v-y' = v-y+(y-y')$ we get
\begin{align*}
\eucP{v'-v}{v-y'}
&= \eucP{v'-v}{v} - \eucP{v'-v}{y} + \eucP{v'-v}{y-y'}
\end{align*}
According to Equation \eqref{eq:torusknot_eucP}, $- \eucP{v'-v}{y} = \epsilon \kappa + \petito{\epsilon}$.
Besides, using $v' - v = \epsilon(w-\eucP{v}{w}v) + \petito{\epsilon}$, we get $\eucP{v'-v}{v} = \petito{\epsilon}$.
Last, Cauchy-Schwarz inequality gives $|\eucP{v'-v}{y-y'}| \leq \eucN{v'-v}\eucN{y-y'}$. Therefore, $\eucP{v'-v}{y-y'} = \grando{\epsilon}\eucN{y-y'}$, where $\grando{\epsilon}$ is the big-o notation.
Gathering these three equalities, we obtain
\begin{align*}
\eucP{v'-v}{v-y'}
&= \petito{\epsilon} + \epsilon \kappa + \grando{\epsilon}\eucN{y-y'}.
\end{align*}
As we can read from this equation, if $\eucN{y-y'}$ goes to zero as $\epsilon$ does, then $\eucP{v'-v}{v-y'}$ is positive for $\epsilon$ small enough.
Observe that $v'$ goes to $v$ when $\epsilon$ goes to 0.
By assumption $y$ is the only minimizer in Equation \eqref{eq:torusknot_g}. By continuity of $g$, we deduce that $y'$ goes to $y$.

By contradiction, we deduce that there exists another point $y'$ which attains the minimum in $g(v)$.
Note that it is the only other one, according to Lemma \ref{lem:torusknot_distance}.
Let us show that $p(y)$ lies in the middle of the segment $[y,y']$.
Suppose that it is not the case. 
Then $p(y)-y$ is not equal to $-(p(y')-y')$, where $p(y')$ denotes the projection of $y'$ on $\langle v \rangle$.
Consequently, the half-spaces $\{w \in E, \eucP{p(y)-y}{w} > 0\}$ and $\{w \in E, \eucP{p(y')-y'}{w} > 0\}$ intersects. Let $w$ be any vector in the intersection.
For $\epsilon>0$, denote $v' = \frac{1}{\eucN{v+\epsilon w}}(1+\epsilon w)$.
If $\epsilon$ is small enough, the same reasoning as before shows that $v'$ contradicts the maximality of $v$.
The situation is represented in Figure \ref{fig:20}.

\begin{figure}[H]
\centering
\begin{minipage}{.49\linewidth}
\centering
\includegraphics[width=.6\linewidth]{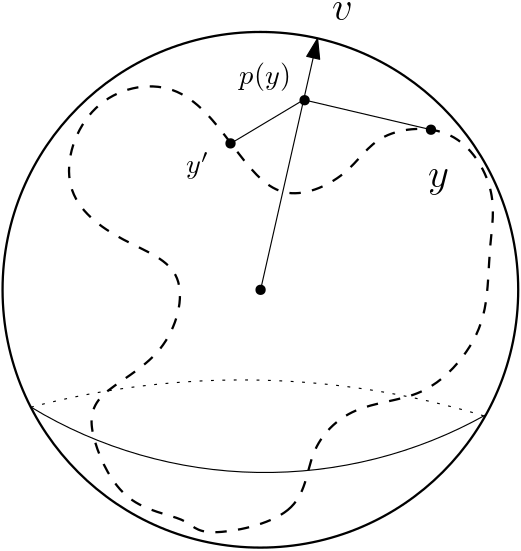}
\end{minipage}
\begin{minipage}{.49\linewidth}
\centering
\includegraphics[width=.7\linewidth]{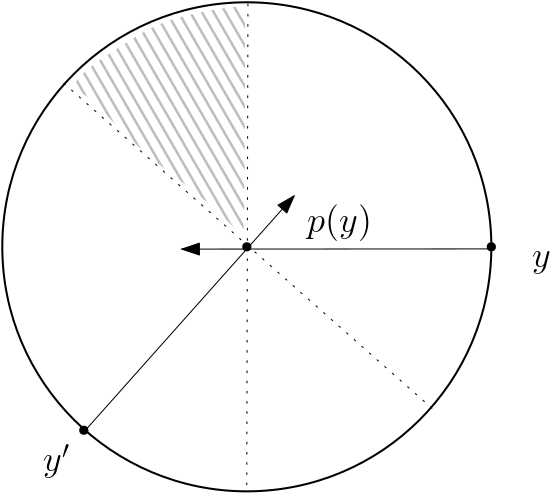}
\end{minipage}
\caption{Left: Representation of the situation where $y$ and $y'$ are minimizers of Equation \eqref{eq:torusknot_g}.
Right: Representation in the plane passing through the points $y$, $y'$ and $p(y)$. The dashed area corresponds to the intersection of the half-spaces $\{w \in E, \eucP{p(y)-y}{w} > 0\}$ and $\{w \in E, \eucP{p(y')-y'}{w} > 0\}$.}
\label{fig:20}
\end{figure}

\medbreak \noindent
\textit{Item 2.} 
Let $v \in \S_3$. The set $\langle v \rangle_+ \cap Y^t$ can be described as
\begin{align*}
\langle v \rangle_+ \cap \bigcup_{y \in Y} \closedball{y}{t}.
\end{align*}
Let $y \in Y$ such that $\langle v \rangle_+ \cap \closedball{y}{t} \neq \emptyset$. Denote by $p(y)$ the projection of $y$ on $\langle v \rangle_+$. 
It is equal to $\eucP{y}{v}v$.
Using Pythagoras' theorem, we obtain that the set $\langle v \rangle_+ \cap \closedball{y}{t}$ is equal to the interval 
\begin{align*}
\left[ p(y) \pm \sqrt{t^2 - \dist{y}{\langle v \rangle}^2} v \right].
\end{align*}
Using the identity $\dist{y}{\langle v \rangle}^2 = \eucN{y} - \eucP{y}{v}^2 = 1 + \frac{\gamma^2}{2} - \eucP{y}{v}^2$, we can write this interval as
\begin{align*}
\big[I_1(y) \cdot v, ~ I_2(y) \cdot v\big],
\end{align*}
where 
$I_1(y) = \eucP{y}{v} - \sqrt{ \eucP{y}{v}^2 -  (1 +  \frac{\gamma^2}{2} -t^2)}$ and
$I_2(y) = \eucP{y}{v} + \sqrt{ \eucP{y}{v}^2 -  (1 +  \frac{\gamma^2}{2} -t^2)}$.
Seen as functions of $\eucP{y}{v}$, the map $I_1$ is decreasing, and the map $I_2$ is increasing (see Figure \ref{fig:21}).
Let $y^* \in Y$ that minimizes $\dist{y}{\langle v \rangle}$. Equivalently, $y^*$ maximizes $\langle y,v\rangle$. It follows that the corresponding interval $\big[I_1(y^*) \cdot v, ~ I_2(y^*) \cdot v\big]$ contains all the others.
We deduce that the set $\langle v \rangle_+ \cap Y^t$ is equal to this interval.

\begin{figure}[H]
\centering
\begin{minipage}{.44\linewidth}
\centering
\includegraphics[width=.65\linewidth]{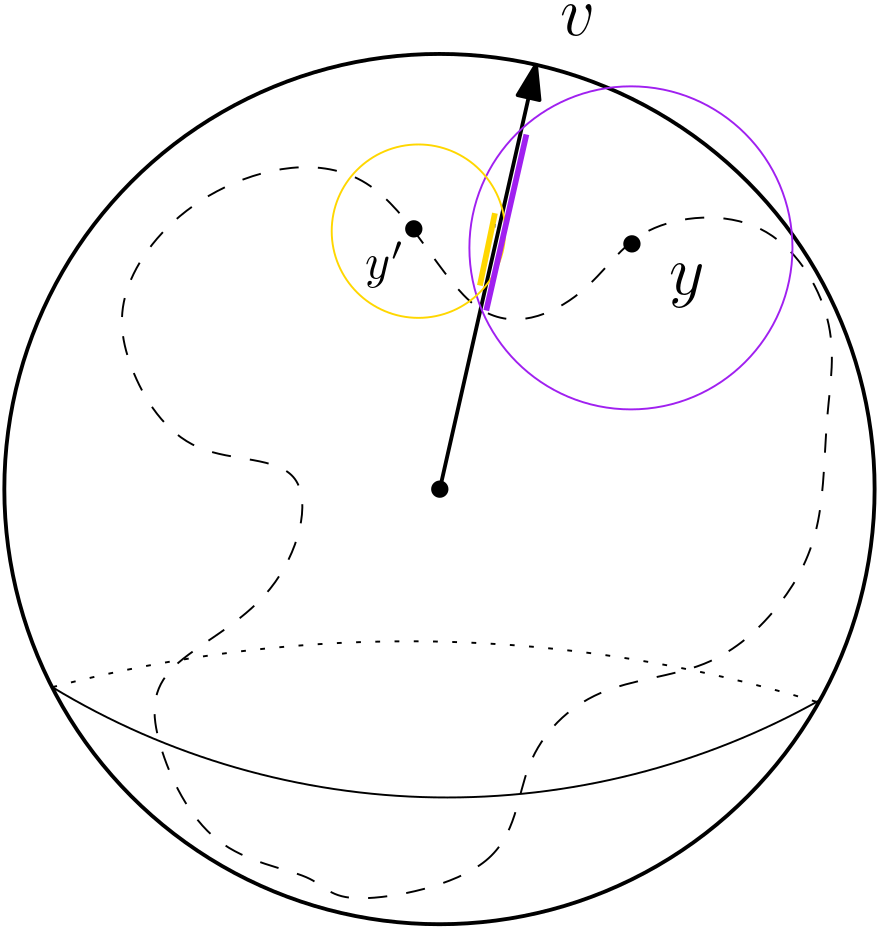}
\end{minipage}
\begin{minipage}{.54\linewidth}
\centering
\includegraphics[width=.85\linewidth]{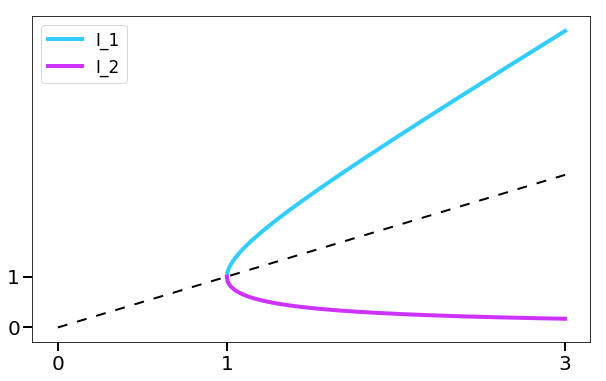}
\end{minipage}
\caption{Left: Representation of two intervals $\langle v \rangle_+ \cup \closedball{y}{t}$  and $\langle v \rangle_+ \cup \closedball{y'}{t}$. 
Right: Representation of the maps $x\mapsto x \pm \sqrt{x^2-1}$.}
\label{fig:21}
\end{figure}

\medbreak \noindent
\emph{Study of the case $t \geq \sqrt{1+\frac{1}{2}\gamma^2}$.}
For every $y \in Y$, we have $\eucN{y} = \sqrt{1+\frac{1}{2}\gamma^2}$.
Therefore, if $t \geq \sqrt{1+\frac{1}{2}\gamma^2}$, then $Y^t$ is star shaped around the point 0, hence it deform retracts on it.

\bigbreak
To close this subsection, let us study the \v{C}ech bundle filtration $(\Y, \p)$ of $Y$.
According to Equation \eqref{eq:tmax_subset_grass}, its filtration maximal value is $\tmax{Y} = \tmaxgamma{X} = \frac{\gamma}{\sqrt{2}}$.
Note that $\frac{\gamma}{\sqrt{2}}$ is lower than $\frac{1}{2}\diam{Y}$.
Consequently, only two cases are to be studied: $t \in [0,1)$, and $t \in \left[1, \frac{1}{2}\diam{Y} \right)$.

The same argument as in Subsect. \ref{subsec:gamma_normal} yields that for every $t \in [0,1)$, the persistent Stiefel-Whitney class $w_1^t(Y)$ is equal to $w_1^0(Y)$.
Accordingly, for every $t \in \left[1, \frac{1}{2}\diam{Y} \right)$, the class $w_1^t(Y)$ is equal to $w_1^1(Y)$.
Let us show that $w_1^0(Y)$ is zero, and that $w_1^1(Y)$ is not.

First, remark that the map $p^0\colon Y \rightarrow \Grass{1}{\R^2}$ can be seen as the normal bundle of the circle. 
Hence $(p^0)^*\colon H^*(Y) \leftarrow H^*(\Grass{1}{\R^2})$ is nontrivial, and we deduce that $w_1^0(Y)=0$.
As a consequence, the persistent Stiefel-Whitney class $w_1^t(X)$ is nonzero for every $t < 1$.

Next, consider $p^1\colon Y^1 \rightarrow \Grass{1}{\R^2}$.
Recall that $Y^1$ deform retracts on the circle
\begin{align*}
Z = \left\{ \left(0,0,\frac{\gamma}{\sqrt{2}}\cos(2\theta), \frac{\gamma}{\sqrt{2}}\sin(2\theta)\right), \theta \in [0,\pi) \right\}.
\end{align*}
Seen in $\R^2 \times \matrixspace{\R^2}$, we have
\begin{align*}
Z = \bigg\{
\bigg( 
\begin{pmatrix}
0  \\
0 
\end{pmatrix}
,
\gamma \begin{pmatrix}
\cos(\theta)^2 & \cos(\theta) \sin(\theta) \\
\cos(\theta) \sin(\theta) & \sin(\theta)^2 
\end{pmatrix}
\bigg),
\theta \in [0, \pi)  \bigg\}.
\end{align*}
Notice that the map $q\colon Z \rightarrow \Grass{1}{\R^2}$, the projection on $\Grass{1}{\R^2}$, is injective. Seen as a map between two circles, it has degree (modulo 2) equal to 1.
We deduce that $q^*\colon H^*(Z) \leftarrow H^*(\Grass{1}{\R^2})$ is nontrivial.
Now, remark that the map $q$ factorizes through $p^1$:
\begin{center}
\begin{tikzcd}
Z \arrow[dr, "q", swap] \arrow[rr, hook]&  & Y^{1} \arrow[dl, "p^{1}"] \\
& \Grass{1}{\R^2} &
\end{tikzcd}
\end{center}
It induces the following diagram in cohomology:
\begin{center}
\begin{tikzcd}
H^*(Z) &  & H^*(Y^{1}) \arrow[ll, "\sim", swap]  \\
& H^*(\Grass{1}{\R^2}) \arrow[ul, "q^*"] \arrow[ur, "(p^{1})^*", swap] &
\end{tikzcd}
\end{center}
Since $q^*$ is nontrivial, this commutative diagram yields that the persistent Stiefel-Whitney class $w_1^1(Y)$ is nonzero.
As a consequence, the persistent Stiefel-Whitney class $w_1^t(Y)$ is nonzero for every $t \geq 1$.

\bibliographystyle{spbasic}      
\bibliography{biblio_JACT}   


\end{document}